
\documentclass [final]{amsart}


\usepackage{amsmath,amsfonts,amssymb,latexsym,hyperref}
\usepackage[foot]{amsaddr}
\usepackage{showkeys}
\usepackage{float} 
\usepackage{graphicx,cite} 

\usepackage{algorithm} 
\usepackage{algpseudocode}
\usepackage{todonotes}

\newtheorem{theorem}{Theorem}

\newtheorem{definition}[theorem]{Definition}
\theoremstyle{remark}
\newtheorem{remark}[theorem]{Remark}

\theoremstyle{definition}

\usepackage{color}

\newcommand{\R}{\mathbb{R}}

\numberwithin{theorem}{section}

\setcounter{tocdepth}{1}

\allowdisplaybreaks

\author[J. Y. Lee]{Jae Yong Lee$^\dagger$}
\address{$^\dagger$Department of Mathematics, Pohang University of Science and Technology (POSTECH), Pohang 37673, Republic of Korea.
}

\author[J. W. Jang]{Jin Woo Jang$^\ddagger$}
\address{$^\ddagger$Institute for Applied Mathematics,
University of Bonn, Endenicher Allee 60, 53115 Bonn, Germany
}

\author[H. J. Hwang]{Hyung Ju Hwang$^\dagger$}

 \AtBeginDocument{%
 	\def\MR#1{}
 }
 \let\svthefootnote\thefootnote
\begin{document}

	\newcommand{\eqdef }{\overset{\mbox{\tiny{def}}}{=}}
	\newcommand{\rth}{{\mathbb{R}^3}}
	\thispagestyle{empty}
	\allowdisplaybreaks
	\let\thefootnote\relax\footnotetext{2010 \textit{Mathematics Subject Classification.} Primary: 68T20, 35Q84, 35B40, 82C40.\\
		\textit{Key words and phrases.}  Vlasov-Poisson-Fokker-Planck system, Poisson-Nernst-Planck system, diffusion limit, artificial neural network, asymptotic-preserving scheme.
		\email{\href{mailto:hjhwang@postech.ac.kr}{hjhwang@postech.ac.kr}} }
		\addtocounter{footnote}{-1}\let\thefootnote\svthefootnote
\title[]{The model reduction of the Vlasov-Poisson-Fokker-Planck system to the Poisson-Nernst-Planck system via the Deep Neural Network Approach}

\begin{abstract}The model reduction of a mesoscopic kinetic dynamics to a macroscopic continuum dynamics has been one of the fundamental questions in mathematical physics since Hilbert's time. In this paper, we consider a diagram of the diffusion limit from the Vlasov-Poisson-Fokker-Planck (VPFP) system on a bounded interval with the specular reflection boundary condition to the Poisson-Nernst–Planck (PNP) system with the no-flux boundary condition. We provide a Deep Learning algorithm to simulate the VPFP system and the PNP system by computing the time-asymptotic behaviors of the solution and the physical quantities. We analyze the convergence of the neural network solution of the VPFP system to that of the PNP system via the Asymptotic-Preserving (AP) scheme. Also, we provide several theoretical evidence that the Deep Neural Network (DNN) solutions to the VPFP and the PNP systems converge to the \textit{a priori} classical solutions of each system if the total loss function vanishes.
\end{abstract}
\maketitle

\tableofcontents 

\section{Introduction}
\subsection{Motivation: a diagram of diffusion limit}The description of physical dynamics in various scales is one of the main questions of interest in the mathematical modeling of complex systems. In kinetic theory, the description of the evolution of gases has been explained via the statistical approach on the probabilistic distribution functions on the mesoscopic level, whereas the fluid theory describes the dynamics on the macroscopic level. Each of these interpretations and the asymptotic expansions of the mesoscopic equations to the macroscopic equations have been crucial issues.

The aim of this paper is to establish the commutation of the following diagram of diffusion limit, which provides the reduction of the kinetic equation (the Vlasov-Poisson-Fokker-Planck system) to the fluid equation (the Poisson-Nernst–Planck system) as the perturbation parameter $\varepsilon$ tends to zero:
\begin{figure}[H]
  \includegraphics[width=\textwidth, draft=false]{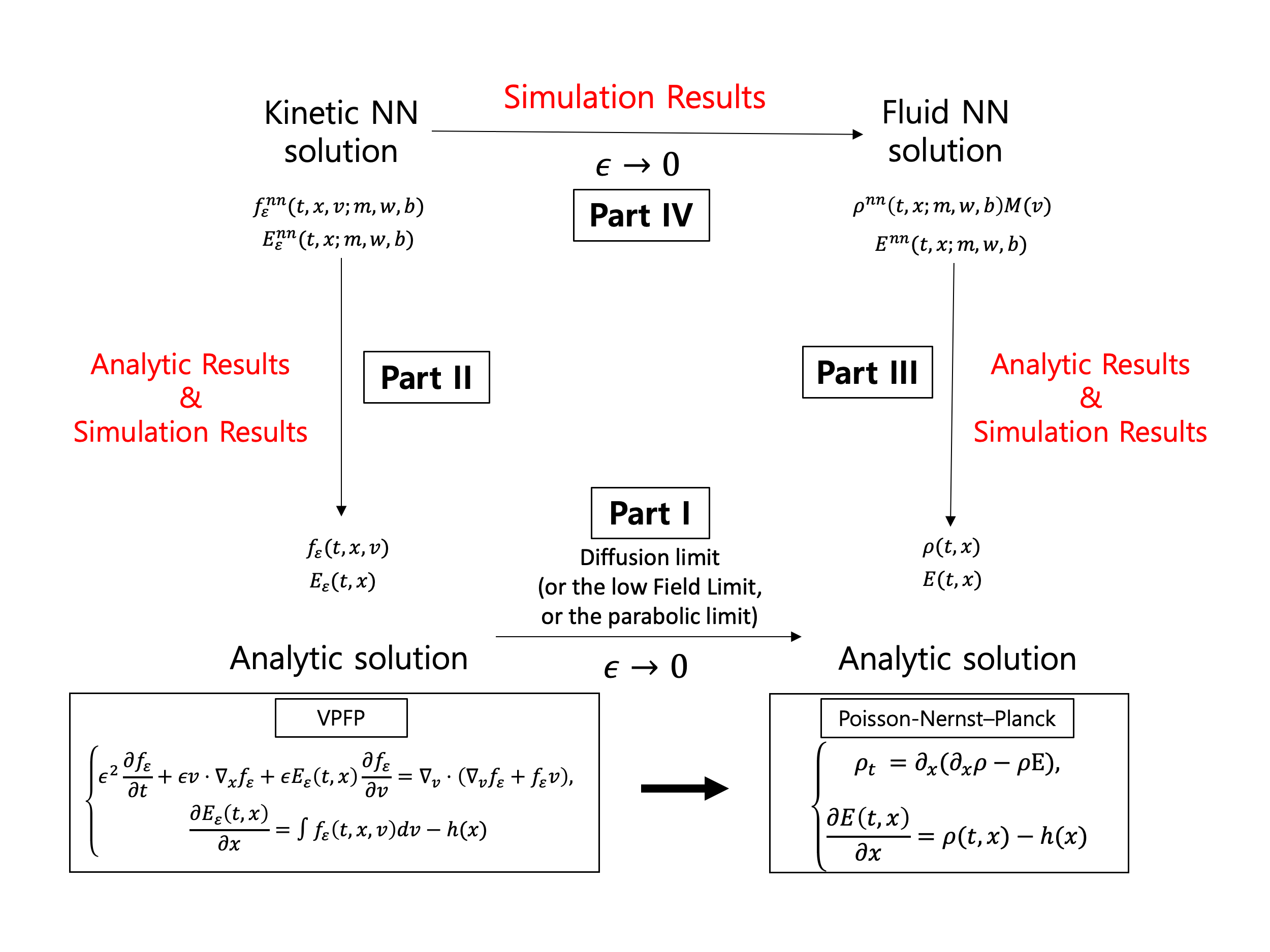}
  \caption{The diagram of diffusion limit}
  \label{fig:diagram}
\end{figure}We refer to a theoretical result from \cite{MR3294407} to obtain the bottom side (Part I) of the diagram. For the left-hand side (Part II), the right-hand side (Part III), and the upper side (Part IV) of the diagram, we use a Deep Learning method using the Deep Neural Network (DNN) solution to approximate the solutions to the kinetic equation and the fluid equation.
We provide large time behaviors and the steady-states of several physical moments of these DNN solutions to show an agreement with the theoretical results. Also, we provide theoretical evidence on the relationship between DNN solutions and the analytic solutions for the left and right-hand sides of the diagram.

There are many numerical studies to simulate an initial-boundary value problem for the kinetic and fluid equations. Especially, it is computationally challenging in a numerical scheme to automatically capture the limit for the asymptotic expansions on the small parameter (e.g. the parameter $\varepsilon$ tends to zero as the upper side of Figure \ref{fig:diagram}). Many numerical schemes have been developed to overcome this challenge. These schemes are the so-called Asymptotic-Preserving (AP) schemes, which have been firstly introduced by Jin \cite{MR1718639}. The key idea is to develop a numerical scheme to preserve the asymptotic limits from a mesoscopic to a macroscopic model in the fixed discrete setting.

A Deep Learning method has achieved remarkable success in various areas. Many studies have recently been introduced for learning partial differential equations (PDEs) using the Deep Learning method. These studies approximate the solutions of PDEs using a neural network architecture as a function approximator based on a universal approximation theorem \cite{MR1015670}. Along with many numerical methods, this Deep Learning approach has been proposed as a new way to simulate PDE problems.

In this paper, we provide a Deep Learning algorithm to simulate each side of Figure~\ref{fig:diagram}. In addition, we prove that the Deep Neural Network solutions converge to the analytic solutions. The simulation results of the Deep Learning support the model reduction of the Vlasov-Poisson-Fokker-Planck solution to the Poisson-Nernst–Planck solution.

\subsection{Main results, difficulties and our strategy}
In this paper, we establish the commutation of the diagram in Figure \ref{fig:diagram} on the diffusion limit from the Vlasov-Poisson-Fokker-Planck (VPFP) system \eqref{VPFP} in a bounded interval with the specular reflection boundary condition \eqref{specular} to the Poisson-Nernst–Planck (PNP) system \eqref{PNP} with the no-flux boundary condition \eqref{no-flux}. We provide a Deep Learning Algorithm \ref{algorithm} to simulate the VPFP system and the PNP system. The Deep Learning is a new possible approach for solving partial differential equations (PDEs) with many advantages. The Deep Learning method is a mesh-free approximation method, while the traditional methods in numerical analysis such as the Finite Difference schemes are influenced by the mesh. In this work, we randomly sample the grid points from a given domain in every epoch to train the Deep Neural Network. By randomly sampling each domain, we avoid the need of generating a mesh. We use the PyTorch library and the Adam optimizer for our Deep Learing algorithm.

One of the main advantages of the Deep Learning is that the algorithm can be implemented easily and intuitively by the computation of the gradient of the loss function with the chain rule, so-called the back-propagation algorithm. Namely, this means that it is easy to put the PDE information into the algorithm by adding terms to the loss function. In this regard, we propose the specific loss functions suitable for the VPFP system \eqref{VPFP_loss_total} in Section \ref{subsec:part2_loss} and for the PNP system \eqref{pnp_loss_total} in Section \ref{subsec:part3_loss}. Also, we use two neural network architectures at the same time to solve the Vlasov-Fokker-Planck system as in Figure \ref{fig:DNN_structure_VPFP} and the Nernst–Planck system as in Figure \ref{fig:DNN_structure_PNP}, which are coupled with the Poisson equation. In this way, there is a considerable advantage compared to traditional numerical methods, for which we needed to find a numerical scheme for each equation. We also use the Softplus activation function ($\bar{\sigma}(x)=\ln (1+e^x)$) for the output layer in the neural network structure. Since the Softplus function has the output in the scale of $(0,+\infty)$, it makes the neural network output $f_\varepsilon^{nn}(t,x,v;m,w,b)$ positive.

Meanwhile, there are some weaknesses of the Deep Learning approach. First of all, it is difficult to evaluate the accuracy of Deep Learning while the numerical methods have well-known error bounds. Also, it is difficult to show that the Deep Learning algorithms find the exact global minimum in the optimization aspect. Since the Deep Learning is the gradient-based approach, we cannot guarantee that we could find the global minimum of the loss function, even in our case using the \textit{Adam} optimizer. Instead, in order to show that our DNN solutions well-approximate the solutions of the VPFP system and the solutions of the PNP system, we provide numerical simulations to demonstrate that the neural network solutions satisfy the given theoretical predictions on the asymptotic behaviors of the VPFP system in Section \ref{subsec:part2_simulation} and of the PNP system in Section \ref{subsec:part3_simulation}. We analyze our DNN solutions via computing the steady-states for the solutions and via computing the physical quantities of the total mass, the kinetic energy, the entropy, the electric energy and the free energy, and their steady-states.

We also provide the theoretical supports for the convergence of the DNN solution to the \text{a priori} classical solution in two theorems. More precisely, for the VPFP system in Part II (Section \ref{subsec:part2_theory}), we claim in Theorem \ref{thm:vpfp_loss_0} that we can find a sequence of approximated neural network solutions that reduce the loss function \eqref{VPFP_loss_total}. Also, we prove an additional theorem (Theorem \ref{thm:vpfp_converge}) which states that the neural network solution converges to an analytic solution as we minimize the loss function \eqref{VPFP_loss_total}. In the proof, we use the transformation for the Vlasov-Poisson-Fokker-Planck system motivated by Carrillo \cite{MR1634851}. Similarly, we provide Theorem \ref{thm:pnp_converge} for the PNP system in Part III (Section \ref{subsec:part3_theory}).

In particular, it is hard to capture the asymptotic limit using the fixed numerical discretization in numerical schemes. In this work, we propose a newly devised technique, `Grid Reuse' method, which boosts the Deep Learning by adding the grid points that have the largest values of the loss function in every epoch. The `Grid Reuse' method makes it possible to approximate the neural network solution of the VPFP system with a small Knudsen number $\varepsilon$ without changing the number of grids sampled. We provide the numerical simulation for the trend of the diffusion limit from the DNN solution of the VPFP system to the DNN solution of the PNP system in Part IV (Section \ref{sec:part4}). To the best of authors' knowledge, this is the first attempt to use the Deep Learning method as an AP scheme to see the trend of the diffusion limit of the Vlasov-Poisson-Fokker-Planck system.

The main distictive of this paper compared to the numerical methods is the use of the neural network approach as a function approximator for the VPFP system, the PNP system, and the AP scheme as the Knudsen number $\varepsilon$ goes to 0. In this paper, our main goal is to complete the commutation in Figure \ref{fig:diagram} of the neural network version similar to Figure \ref{fig:AP} of the numerical analysis version.

\subsection{The Vlasov-Poisson-Fokker-Planck equation}In order to study the diffusion limit of the Vlasov-Poisson-Fokker-Planck system in a bounded interval $\Omega\eqdef(-1,1)$, we need to rescale the VPFP system with the Knudsen number $\varepsilon$. The small parameter $\varepsilon$ represents the ratio of the mean free path of the particles to the typical macroscopic length scale of the particle flow. We are interested in the scaling of the system using the change of variables $t'=\varepsilon^2t$ and $x'=\varepsilon x$; see Section 4 of \cite{MR1726024} and Section 1 of \cite{MR3294407}. With these variables, the VPFP system in a bounded interval $\Omega=(-1,1)$ can be written in the dimensionless form as follows:
\begin{equation}\label{VPFP}\begin{split}
\varepsilon^2\partial_t f_\varepsilon+\varepsilon v\partial_xf_\varepsilon+\varepsilon E_\varepsilon\partial_v f_\varepsilon &=\partial_v (vf_\varepsilon+ \partial_v f_\varepsilon),\hspace{3mm}t\in [0,T], \hspace{1mm}x\in\Omega, \hspace{1mm}v\in {{\mathbb{R}}},\\f_\varepsilon(0,x,v) &=f_0(x,v),
\\\partial_x E_\varepsilon&=\int_{{\mathbb{R}}}f_\varepsilon dv-h(x),\ x\in \Omega , \\
E_\varepsilon(0,-1)&=0,\\  E_\varepsilon(t,x)&=0,\ x\in\partial\Omega,
\end{split}
\end{equation}
where $f_\varepsilon =f_\varepsilon (t,x,v)\geq 0$ is the distribution of particles in $(t,x,v)\in(0,T)\times \Omega \times {{\mathbb{R}}}$, and $E_\varepsilon=E_\varepsilon(t,x)$ is the self-consistent electric force, where $\Phi_\varepsilon (t,x)$ is the internal potential of the system with the equation
$$E_\varepsilon (t,x)=-\partial_x\Phi_\varepsilon (t,x).$$
The function $h(x)$, which is on the right-hand side of the Poisson equation, stands for the presence of a background charge (e.g. ions). We assume the global neutrality condition 
\begin{equation}\label{VPFP_neutrality}
	\int_\Omega\int_{{\mathbb{R}}}f_\varepsilon dvdx-\int_\Omega h(x)dx=0.
\end{equation}
Note that the equations \eqref{VPFP}$_2$,\eqref{VPFP}$_3$, and \eqref{VPFP}$_4$ will together imply
\begin{equation}\label{VPFP_E_ini}
  0=E_\varepsilon(0,x)=\int_{-1}^x \partial_x E_\varepsilon(0,y)dy = \int_{-1}^x \int_{\mathbb{R}}f_0(y,v)dvdy-\int_{-1}^x h(y) dy.
\end{equation}

The existence and the uniqueness of the VPFP system have been well-studied. Victory and O'Dwyer in \cite{MR1052014} showed the existence of the classical solutions to the VPFP system in two dimension. Rein and Weckler in \cite{MR1178396} and Bouchut in \cite{MR1200643} showed the existence of global solutions to the three-dimensional VPFP system in the whole space. We refer to \cite{MR1634851} for the global weak solutions of VPFP system in a bounded domain with absorbing and reflection type boundary conditions. The large time asymptotic solutions to the Vlasov-Fokker-Planck equation has been studied first in \cite{MR897264,MR1115290} in the case that the particles occupy the whole space. They prove that the distribution function $f(t,x,v)$ tends to a Maxwellian function. This result has been extended by F. Bouchut and J. Dolbeault in \cite{MR1306570} under the more general assumption on the external potential. Also, we refer to \cite{MR1414375,MR1677677} in whole space domain.
In the case of initial-boundary value problem for the VPFP system, Bonilla et al. \cite{MR1470927} studied the large time asymptotic behaviors of the solutions with the reflection type boundary condition. The authors in \cite{MR4076068} considered the global well-posedness of the nonlinear Fokker-Planck equation with the specular reflection boundary. Also, the well-posedness and regularity for different boundary conditions to the kinetic Fokker-Planck equation has been studied in \cite{MR3007749,MR3436235,MR3788197,MR3237885,MR3902464,MR3897919,MR3906998,MR3614499}.

\subsection{The Poisson-Nernst–Planck equation}One of the macroscopic models to describe the distribution and the transport of ionic species is the Poisson-Nernst-Planck (PNP) system, where it is also often called the Drift-Diffusion-Poisson (DDP) equation. The PNP system consists of the Nernst-Planck equation that describes the drift and diffusion of ion and the Poisson equation that describes the effect of the self-consistent electric field. In this paper, we consider the following the 1-dimensional Poisson-Nernst-Planck (PNP) system in a bounded interval $\Omega=(-1,1)$: \begin{equation}\label{PNP}\begin{split}
\partial_t \rho &= \partial_x(\partial_x\rho-\rho E),\hspace{3mm}t\in [0,T], \hspace{1mm}x\in\Omega,
\\\rho(0,x) &=\rho_0(x),
\\\partial_x E&=\rho(t,x)-h(x),\ x\in \Omega,
\\E(0,-1)&=0,
\\ E(t,x)&=0,\ x\in\partial\Omega.
\end{split}
\end{equation}
Here, $\rho=\rho(t,x)$ stands for density of particles and $E(t,x)$ is the self-consistent electric force with the relation $E(t,x)=-\partial_x\Phi(t,x)$, similarly to the VPFP system. Then \eqref{PNP}$_2$,\eqref{PNP}$_3$ and \eqref{PNP}$_4$ will together imply 
\begin{equation}\label{PNP_E_ini}
  0=E(0,x)=\int_{-1}^x \partial_x E(0,y)dy = \int_{-1}^x\rho_0(y)dy-\int_{-1}^x h(y)dy.
\end{equation}
We also assume the neutrality condition for the background charge $h(x)$ as follows:
\begin{equation}\label{PNP_neutrality}
	\int_\Omega\rho(t,x)dx-\int_\Omega h(x)dx=0.
\end{equation}

The PNP system has a number of applications in many fields, such as electrical engineering, elctrokinetics, elctrochemistry and biophysics. Therfore, the analytical study of the PNP system has also a long history in the various context. An initial-boundary value problem for a system on the transport of mobile carriers in a semiconductor is studied by Gajewski and Groger in \cite{MR826656}. The existence and large time behavior of the PNP equation is studied in \cite{MR1305769}. Also, the convergence rate of solutions to the PNP system is studied in \cite{MR1770444,biler2000long}. We refer to a review paper \cite{MR3023368} for a recent development of generalized PNP systems.

\subsection{Boundary conditions}
\subsubsection{Phase boundary and the specular reflection for the VPFP system}\label{subsec:boundary_VPFP}
Throughout this paper, we will denote the phase boundary in $\Omega\times {{\mathbb{R}}}$ as $\gamma\eqdef \partial\Omega\times {{\mathbb{R}}}.$ Additionally we split this boundary into an outgoing boundary $\gamma_+$, an incoming boundary $\gamma_-$, and a singular boundary $\gamma_0$ for grazing velocities, defined as
\begin{equation}
\begin{split}
\gamma_+&\eqdef \{(x,v)\in\partial\Omega\times {{\mathbb{R}}} :n_x\cdot v >0\},\\
\gamma_-&\eqdef \{(x,v)\in\partial\Omega\times {{\mathbb{R}}} :n_x\cdot v <0\},\\
\gamma_0&\eqdef \{(x,v)\in\partial\Omega\times {{\mathbb{R}}} :n_x\cdot v =0\},
\end{split}
\end{equation} where $n_x$ is the outward normal vector.
We define the boundary integration for $f(x,v), \ (x,v)\in \partial \Omega \times {{\mathbb{R}}}$, $$\int_{\gamma_\pm}fd\gamma=\int_{\gamma_\pm}f(x,v)|n_x\cdot v |dS_xdv,$$ where $dS_x$ is the standard surface measure on $\partial\Omega$ and denote $$\int_{\gamma}fd\gamma=\int_{\gamma_+}f(x,v)d\gamma-\int_{\gamma_-}f(x,v)d\gamma.$$ We also define the $L^2(\gamma)$ norm with respect to the measure $|n_x\cdot v |dS_xdv$, $$||f||^2_\gamma\eqdef ||f||^2_{\gamma_+}+||f||^2_{\gamma_-}.$$

In terms of $f$, we formulate the specular reflection boundary condition as \begin{equation}\label{specular}
	f(t,x,v)|_{\gamma_-}=f(t,x,R(x)v),
\end{equation} for all $x\in\partial \Omega$,
where $$R(x)v\eqdef v-2n_x(n_x\cdot v).$$

One of the well-known a priori conservation laws for the Vlasov-Poisson-Fokker-Planck system (\ref{VPFP}) for the specular boundary conditions is the conservation of mass. It means that the total mass, ``\text{Mass}", of distribution $f_\varepsilon(t,x,v)$ is preserved for any time as follows:
\begin{equation}\label{mass conservation}
  \text{Mass}(t)\eqdef\int_\Omega\int_\R f_\varepsilon(t,x,v) dv dx \equiv\int_\Omega\int_\R f_0(x,v),
\end{equation}
which means $\frac{d}{dt}\text{Mass}(t)=0$.

\subsubsection{No-flux Boundary Condition for the PNP system}\label{subsec:boundary_PNP}The PNP system is usually posed in a bounded domain with some boundary condition. In this paper, we use the no-flux boundary condition for the PNP equation as follows:
\begin{equation}\label{no-flux}
	(\partial_x\rho-\rho E)\cdot n_x=0,\ x\in\partial\Omega.
\end{equation}
This condition is one of the natural boundary conditions for a macroscopic model to explain the diffusion of ions under the effect of potential. With the Dirichlet boundary condition \eqref{PNP}$_5$ for the Poisson equation, it reduces the following boundary conditions:
$$\partial_x\rho(t,x)=E(t,x)=0,\ x\in\partial\Omega.$$

This boundary condition implies that the system (\ref{VPFP}) has the conservation of charges/ions. We can check this property by integrating \eqref{PNP}$_1$ with respect to $x$ over the whole domain $[-1,1]$ as follows:
$$\partial_t \left(\int_\Omega\rho dx\right)=\left[\partial_{x}\rho-\rho E\right]^{1}_{t=-1}=0.$$
This implies that the convservation of total density, that is,
\begin{equation}\label{mass rho conservation}
  \text{Mass$_\rho$}(t)\eqdef\int_\Omega\rho(t,x) dx \equiv\int_\Omega\rho_0(x) dx,
\end{equation}
which means $\frac{d}{dt}\text{Mass$_\rho$}(t)=0$.

\subsection{The equilibrium state and the macroscopic quantities}
\subsubsection{The equilibrium state and the macroscopic quantities for the VPFP system}\label{subsubsec:VPFP_equilibrium_quantities} It is well-known that the VPFP system has a local equilibrium solution. Bonilla et al. \cite{MR1470927} introduced the form of the steady-state of the VPFP system in bounded domains with the reflection boundary condition on $f(t,x,v)$ and the Dirichlet boundary condtion for the potential $\Phi(t,x)$ without a background charge. They remark that they can prove a result analogously with the Neumann boundary conditions instead of the Dirichlet boundary conditions. In this regard, the VPFP system \eqref{VPFP}, which has the background charge as in \eqref{VPFP}$_3$, has the equilibrium state as follows:
\begin{equation}\label{equilibrium_orginal}
f_{\varepsilon,\infty}(x,v) = C_{vpfp}M(v)e^{-\Phi_\infty(x)},
\end{equation}
where $M(v)\eqdef\frac{1}{\sqrt{2\pi}}e^{-\frac{v^2}{2}}$ is the normalized Maxwellian and $\Phi_\infty(x)$ is a weak solution of the Poisson-Boltzmann (PB) equation:
\begin{equation}\begin{split}\label{PB_VPFP}
-\Delta\Phi_\infty(x)=C_{vpfp}e^{-\Phi_\infty(x)}-h(x),\\
\partial_{x}\Phi_\infty(x)=0\quad\text{on}\quad x\in\partial\Omega,
\end{split}\end{equation}
and
\begin{equation}\notag
C_{vpfp}=\|f_0(\cdot,\cdot)\|_{L^1_{x,v}}\left(\int_\Omega e^{-\Phi_\infty(x)} dx\right)^{-1}.
\end{equation}
The system \eqref{PB_VPFP} is called the Poisson-Boltzmann (PB) equation. By \cite[Theorem 1]{MR1778187}, the system \eqref{PB_VPFP} has a solution $\Phi_\infty(x)$ unique up to an additive constant. If we assume the background charge $h(x)$ as constant, we can check that the system \eqref{PB_VPFP} has a solution $\Phi_\infty(x)=0$ by the global neutrality condition \eqref{VPFP_neutrality}. Therefore, the VPFP system \eqref{VPFP} has the global equilibrium state $(f_{\varepsilon,\infty}, E_{\varepsilon,\infty})$ as
\begin{equation}\label{equilibrium_VPFP}
  f_{\varepsilon,\infty}(x,v)=\frac{\|f_0(\cdot,\cdot)\|_{L^1_{x,v}}}{|\Omega|}M(v), \quad E_{\varepsilon,\infty}(x)=-\partial_x\Phi_\infty(x)=0.
\end{equation}
We expect that the neural network solutions of the VPFP system reach the steady-state \eqref{equilibrium_VPFP} (see simluation Section \ref{subsec:part2_simulation}).

The Lyapunov functional $\eta(t)$ for the VPFP system \eqref{VPFP} is defined by the relative entropy of the solution $f_\varepsilon(t,x,v)$ with respect to a non-normalized stationary distribution $\hat{f}$. As explained in \cite{MR1470927}, we define the Lyapunov functional $\eta(t)$ as
\begin{equation}
	\eta(t)\eqdef\int_\Omega\int_\R f_\varepsilon\log\left(\frac{f_\varepsilon}{\hat{f}_\varepsilon}\right)dvdx,
\end{equation}
where $\hat{f}_\varepsilon$ is defined as
\begin{equation}
	\hat{f}_\varepsilon\eqdef \text{exp}\left\{-\frac{v^2}{2}-\Phi_\varepsilon(t,x)+\frac{1}{\|f_0(\cdot,\cdot)\|_{L^1_{x,v}}}\frac{1}{2}\left(\int_\Omega E_\varepsilon(t,x)^2dx\right)\right\}
\end{equation}
Then, $\eta(t)$ can be reduced as
\begin{align*}
	\eta_\varepsilon(t) &= \int_\Omega\int_\R f_\varepsilon \log\left(\frac{f_\varepsilon }{\hat{f}_\varepsilon }\right)dvdx\\
	&= \int_\Omega\int_\R \left[ f_\varepsilon \log f_\varepsilon  +\frac{1}{2}f_\varepsilon v^2 + f_\varepsilon\Phi - \frac{f_\varepsilon}{2\|f_0(\cdot,\cdot)\|_{L^1_{x,v}}}\left(\int_\Omega E_\varepsilon(t,x)^2dx\right) \right]dvdx\\
	&= \int_\Omega\int_\R f_\varepsilon \log f_\varepsilon  dvdx + \frac{1}{2}\int_\Omega\int_\R f_\varepsilon v^2dvdx + \int_\Omega \left(\int_\R f_\varepsilon dv\right) \Phi_\varepsilon dx\\
	&\quad- \frac{1}{2}\int_\Omega E_\varepsilon (t,x)^2dx
\end{align*}
If we assume the background charge $h(x)$ as constant and assume the zero-mean constraint for the $\Phi_\varepsilon$ (see Remark \ref{remark:zero-mean}), then we have
\begin{multline}\notag
\int_\Omega \left(\int_\R f_\varepsilon dv\right) \Phi_\varepsilon dx
\\= \int_\Omega \left(-\partial_{xx}\Phi_\varepsilon+h(x)\right)\Phi_\varepsilon dx = -\int_\Omega\partial_{xx}\Phi_\varepsilon\Phi_\varepsilon dx + \int_\Omega h(x) \Phi_\varepsilon dx
\\ = -\underbrace{\left[(\partial_{x}\Phi_\varepsilon)\Phi_\varepsilon\right]^{1}_{-1}}_{=0} + \int_\Omega(\partial_{x}\Phi_\varepsilon)^2 dx + \underbrace{\int_\Omega h(x) \Phi_\varepsilon dx}_{=0}= \int_\Omega E_\varepsilon (t,x)^2dx.
\end{multline}
Therefore, it yields that
\begin{multline}\notag
	\eta_\varepsilon(t)= \int_\Omega\int_\R f_\varepsilon \log f_\varepsilon  dvdx + \frac{1}{2}\int_\Omega\int_\R f_\varepsilon v^2dvdx + \frac{1}{2}\int_\Omega E_\varepsilon (t,x)^2dx
	\\\eqdef -\text{Ent}(t) + \text{KE}(t) + \text{EE}(t),
\end{multline}
where the entropy of the system ``\text{Ent}", the total kinetic energy ``\text{KE}", and the electric pontential energy ``\text{EE}" of the system are defined as
\begin{equation}\label{Ent}
	\text{Ent}(t)\eqdef- \int_{\Omega\times \R} f_\varepsilon\log f_\varepsilon dxdv,
\end{equation}
\begin{equation}\label{KE}
	\text{KE}(t) \eqdef \frac{1}{2} \int_{\Omega\times \R} |v|^2f_\varepsilon dxdv,
\end{equation}
and
\begin{equation}\label{EE}
	\text{EE}(t)\eqdef \frac{1}{2}\int_\Omega |E_\varepsilon|^2 dx.
\end{equation}
The Lyapunov functional is also called the free energy defined as
\begin{equation}\label{FE}
	\text{FE}(t)\eqdef -\text{Ent}(t) + \text{KE}(t) + \text{EE}(t).
\end{equation}
Since the Lyapunov functional satisfies $\frac{d}{dt}\eta(t)\leq0$, we expect that the free energy \eqref{FE} is a non-increasing function (see Section \ref{subsec:part2_simulation}).

\subsubsection{The equilibrium state and the free energy for the PNP system}\label{subsubsec:PNP_equilibrium_quantities}The steady state of the PNP system \eqref{PNP} satisfies
$$\partial_x(\partial_x\rho_\infty-\rho_\infty E_\infty)=0,$$
from the equations \eqref{PNP}$_1$. It is reduced to
$$\frac{\partial_x\rho_\infty}{\rho_\infty}-E_\infty=\text{Constant}.$$
Therefore, we have the following steady state:
\begin{equation}\label{steady_state}
\rho_\infty(x)=C_{pnp}\exp\left(\int_\Omega E_\infty(x)dx\right),
\end{equation}
where $E_\infty(x)$ is a solution of the Poisson-Boltzmann (PB) equation
\begin{equation}\begin{split}\label{PB_PNP}
\partial_xE_\infty(x)&=C_{pnp}\exp\left(\int_\Omega E_\infty(x)dx\right)-h(x),\\
E_\infty(x)&=0\quad\text{on}\quad x\in\partial\Omega,
\end{split}\end{equation}
with some constant $C_{pnp}$. We can express the constant $C_{pnp}$ using the total density \eqref{mass rho conservation} as
\begin{equation}\label{PNP_constant}
C_{pnp}=\left(\int_{[-1,1]}\rho_0(x)dx\right)\left(\int_{[-1,1]} \exp\left(\int_\Omega E_\infty(x) dx\right) dx\right)^{-1}.
\end{equation}
The PB equation \eqref{PB_PNP} has a solution $E_\infty(x)=0$ similar to the PB equation \eqref{PB_VPFP} in the VPFP system. Therefore, the PNP system \eqref{PNP} has the steady state as follows:
\begin{equation}\label{equilibrium_PNP}
  \rho_\infty(x)=C_{pnp}, \quad E_\infty(x)=0,
\end{equation}
with the constant $C_{pnp}$ which is defined in \eqref{PNP_constant}.
We expect that the neural network solutions of the PNP system reach the steady-state \eqref{equilibrium_PNP} (see simluation Section \ref{subsec:part3_simulation}).

Also, the free energy $\text{FE$_\rho$}(t)$ of the PNP system \eqref{PNP} is defined as follows (similar to \cite{flavell2014conservative,MR2815679,MR3571893}):
\begin{equation}\label{PNP_FE}
  \text{FE$_\rho$}(t)\eqdef\int_\Omega\left(\rho(t,x)\log\rho(t,x) +\frac{1}{2}E(t,x)^2 \right)dx,
\end{equation}
which has both the entropic part and the interaction part. The first term\newline $\rho(t,x)\log\rho(t,x)$ on the right-hand side is the entropy related to the Brownian motion of each particles, and the second term $\frac{1}{2}E(t,x)^2$ is the electric potential energy of the particles.

Under the specific boundary conditions \eqref{no-flux} and \eqref{PNP}$_5$, the PNP system has the following relation:
\begin{multline}\notag
  \int_\Omega\Phi(\rho(t,x)-h(x)) dx = \int_\Omega\Phi\partial_x E dx= -\int_\Omega\Phi\partial_{xx}\Phi dx\\
  =-[\Phi\partial_x\Phi]^1_{-1}+\int_\Omega(\partial_{x}\Phi)^2 dx = \int_\Omega(\partial_{x}\Phi)^2 dx = \int_\Omega E(t,x)^2 dx,
\end{multline}
by multiplying $\Phi(t,x)$ onto \eqref{PNP}$_3$, integrating it over the domain $\Omega$ and using the integration by parts with respect to $x$. Therefore, the free energy can be rewritten as
\begin{equation}\notag
  \text{FE$_\rho$}(t)=\int_\Omega\left(\rho(t,x)\log\rho(t,x) +\frac{1}{2}(\rho(t,x)-h(x))\Phi \right)dx.
\end{equation}
By taking the time derivative of the free energy $F(t)$, we can derive
\begin{multline}\notag
  \frac{d}{dt}\text{FE$_\rho$}(t) = \int_\Omega\left(\rho_t\log\rho+\rho_t\right) dx +\frac{1}{2}\int_\Omega \left((\rho-h(x))\Phi_t + \rho_t\Phi\right) dx\\
  = \int_\Omega\rho_t\left(\log\rho+1+\Phi\right) dx +\frac{1}{2}\int_\Omega \left((\rho-h(x))\Phi_t - \rho_t\Phi\right) dx.
\end{multline}
Then, we have 
\begin{multline}\notag
  \frac{d}{dt}\text{FE$_\rho$}(t) = \int_\Omega\partial_x(\rho_x-\rho E)\left(\log\rho+1+\Phi\right) dx+\frac{1}{2}\int_\Omega \left(-\Phi_{xx}\Phi_t + \Phi_{txx}\Phi\right) dx\\
  =-\int_\Omega(\rho_x-\rho E)\left(\frac{\rho_x}{\rho}+\Phi_x\right) dx + \frac{1}{2}\int_\Omega\partial_x\left(\Phi\Phi_{tx}-\Phi_t\Phi_x \right)dx\\
  =-\int_\Omega\frac{1}{\rho}(\rho_x-\rho E)^2 dx + \frac{1}{2}\left[\Phi\Phi_{tx}-\Phi_t\Phi_x \right]^1_{-1}=-\int_\Omega\frac{1}{\rho}(\rho_x-\rho E)^2 dx.
\end{multline}
Therefore, the PNP system \eqref{PNP} satisfies the following free energy dissipation law:
\begin{equation}\label{FE_dissipation}
  \frac{d}{dt}\text{FE$_\rho$}(t)=-\int_\Omega\frac{1}{\rho}(\rho_x-\rho E)^2 dx\leq0.
\end{equation}
We expect that the free energy \eqref{PNP_FE} of the PNP system is a non-increasing function (see simulation Section \ref{subsec:part3_simulation}).

\subsection{Mathematical results on the diffusion limit}In this section, we introduce past results on the diffusion limit of the VPFP system. There are two scalings of the VPFP. The first one is the diffusion limit (or the parabolic limit, or the low field limit), and the second one is the drift limit (or the hyperbolic limit, or the high field limit). In this paper, we consider the first one on the diffusion limit of the VPFP system only. The diffusion limit has been extensively investigated in many works. Poupaud in \cite{MR1127004} considers the diffusion limit of the semiconductor Boltzmann equation. The diffusion limit for the VPFP system with a given background was considered by \cite{MR1780148,MR2139941} in the two-dimensional case. And later, El Ghani and Masmoudi in \cite{MR2664460} extend these results to higher dimensional cases in the renormalized sense. The case of multiple-species dynamics is also considered in \cite{MR3294407,MR3479195}. A recent paper \cite{MR3294407} of Wu, Lin and Liu treats the diffusion limit of the VPFP system in a bounded domain with reflection boundary conditions. We used the results of this paper to show the bottom side of Figure \ref{fig:diagram}. Also, there are many works that deal with the drift limit as in \cite{MR2142374,MR1859832,MR1848592,MR1834113}.

\subsection{Existing numerical methods and an Asymptotic Preserving scheme}
In this section, we introduce a brief history of the numerical methods to approximate the solutions of the VPFP system and the PNP equation. We also introduce the numerical studies concerning the asymptotic expansions on the small parameters, the so-called Asymptotic Preserving (AP) scheme.

There are many numerical studies to solve the VPFP system and related systems. There is a wide range of literature on numerical analysis for the Fokker-Planck (FP) equation including the finite difference method \cite{MR1739113,MR1619879}, and its conservative type scheme \cite{MR892257,MR1283340,MR1640174,MR1688993,MR1447091}. The particle method \cite{allen1994computational,MR1377255} is an effective method for the stochastic properties of the Fokker-Planck operator. Also, Wollman and Ozizmir \cite{MR2145394,MR2392562,MR2567867} provided the deterministic particle method for the VPFP systems in one and two-dimensional cases. Another approach is the spectral method to solve the Fokker-Planck equation. In \cite{MR1795398}, they develop a new spectral method based on a Fourier spectral approximation for the Boltzmann equation. Filbet and Pareschi in \cite{MR1906573} extended the method to the nonhomogenous case. The review paper \cite{MR3202241} contains the latest references on numerical methods for collisional kinetic equations.

Also, a lot of efforts have been made to the numerical methods for the PNP system. Many of the existing methods have been constructed for both one-dimensional and higher dimensional cases in various chemical and biological contexts. We refer to some recent studies for solving time-dependent PNP systems. Solkalski et al. in \cite{sokalski2003numerical} proposed the finite difference scheme for analyzing liquid junction and ion-selective membrane potentials. Hyon et al. in \cite{MR2815679} provided another finite element method with the back-Euler method for the modified PNP system. It is considered to be difficult for numerical schemes to provide the physical properties of the PNP system; namely the nonnegativity principle, the mass conservation, and the free energy dissipation. Regarding these difficulties, Liu and Wang in \cite{MR3192448} developed a finite difference method for the PNP system. They focus on the development of a free energy satisfying numerical method for the PNP system. They also provided the discontinuous Galerkin scheme for a one-dimensional case in \cite{MR3571893}. The implicit methods with the trapezoidal rule and backward differentiation are presented in \cite{flavell2014conservative}.

Regarding the numerical methods to capture the relation between two regimes, Shi Jin in \cite{MR1718639} first introduced the numerical scheme that preserves the asymptotic limits from the mesoscopic to the macroscopic models for transport in diffusive regimes - the asymptotic-preserving (AP) scheme. This scheme could be illustrated in the following commutative diagram of \cite[Figure 1]{MR2964096}:
\begin{figure}[H]
  \includegraphics[width=0.3\textwidth, draft=false]{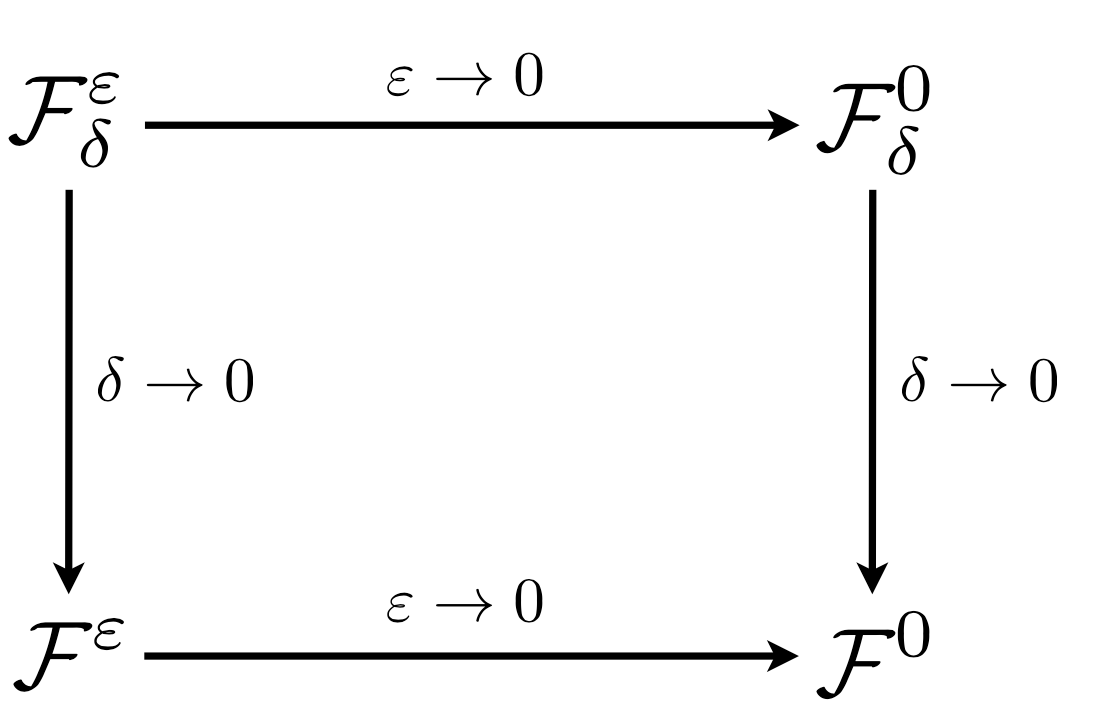}
  \caption{Illustration of AP schemes}
  \label{fig:AP}
\end{figure}
As explained in \cite{MR2964096}, $\mathcal{F}^\varepsilon$ is a mesoscopic model which depends on parameter $\varepsilon$ that characterizes the small scale. $\mathcal{F}^\varepsilon_\delta$ is a discretization of $\mathcal{F}^\varepsilon$ with parameter $\delta$ that is related to numerical discretization (such as mesh size and/or time step). As $\varepsilon$ goes to zero, the mesoscopic model $\mathcal{F}^\varepsilon$ is approximated by a macroscopic model $\mathcal{F}^0$. Then, the scheme $\mathcal{F}^\varepsilon_\delta$ is called AP if the asymptotic limit of $\mathcal{F}^\varepsilon_\delta$ as $\varepsilon\rightarrow0$ with $\delta$ fixed, denoted by $\mathcal{F}^0_\delta$, is a good approxaimation of $\mathcal{F}^0$.

The AP schemes are developed for various equations. Especially, there are many studies that deal with the AP schemes for the kinetic equations with the Euler regime. Filbet and Jin in \cite{MR2674294} developed a penalization method to overcome the Boltzmann integral, which is a fully nonlinear collision operator. Jin and Yan generalized their idea to the nonhomogeneous Fokker-Planck-Landau equation. Dimarco and Pareschi in \cite{MR2861709} introduce an exponential Runge-Kutta method for kinetic equations. The AP schemes for the high field limit of the VPFP system are considered in \cite{MR2931501,MR2786393}. In \cite{MR2786393}, they also developed the AP scheme based on a micro-macro decomposition for the diffusion limit of the Vlasov-Poisson-BGK model. We refer to the recent surveys by Jin \cite{MR2964096}, Degond \cite{MR3220424} and Pareschi and Russo \cite{pareschi2011efficient}.

Given the existing numerical methods in the literature, the main distinctive of this paper is the use of the neural network approach as a solver for these important problems. We used the neural network method as a function approximator for the VPFP system, PNP system, and the AP scheme as the parameter $\epsilon$ goes to 0. The aim of this paper is to complete Figure \ref{fig:diagram} of neural network vesion similar to Figure \ref{fig:AP} of the numerical analysis version. 

\subsection{Neural network and an approach to solve a PDE}
Neural network has also drawn attention in the machine learning community. It has been used for various fields such as natural language process, image recognition, speech recognition, and others. Deep Learning, which uses a deep stack of neural network layer called a Deep Neural Network (DNN), is effectively applied in these areas. The neural network architecture is introduced in \cite{MR10388} for the first time. There are theoretical results to justify the use of neural networks in these applications such as \cite{MR1015670,hornik1989multilayer,funahashi1989approximate,hornik1991approximation}. The key theorem to these results is the universal approximation theorem. The universal approximation theorem states that an arbitrary real-valued function can be well-approximated by a feed-forward neural network. Later, Li in \cite{li1996simultaneous} showed that the neural network with one hidden layer could approximate not only a target function but also its higher partial derivatives on a compact set.

 Then Deep Learning as a PDE solver has also been studied; \cite{lagaris1998artificial,lagaris2000neural} suggested the use of neural networks to solve ODEs and PDEs. Recently, Raissi et al. \cite{MR3881695} introduced physics informed neural networks. They design data-driven algorithms for two main problems: data-driven solutions and data-driven discovery of partial differential equations. The data-driven method to solve the high-dimensional PDEs with a DNN is proposed in \cite{MR3874585}. The second problem, called the forward-inverse problem, is also considered in \cite{MR4112187} with a theoretical analysis of the convergence of the DNN solutions to the classical solutions. In \cite{anitescu2019artificial}, they present a method for approximating the solution of PDEs using an adaptive collocation strategy. Also, Han et al. in \cite{MR4030583} deal with the uniformly accurate moment system using the kinetic equation as an example.

Hwang et al. in \cite{MR4116803} introduce the Deep Neural Network solutions to the kinetic Fokker-Planck equation in a bounded interval under the varied types of the physical boundary conditions. They observed the asymptotic behaviors of the DNN solutions to verify an agreement with theoretical results. They also provide the theoretical proofs on the relationship between the DNN solutions and the a priori analytic solutions. Our paper is motivated by several ideas in \cite{MR4116803}. We expand their ideas to a more general VPFP system and its diffusion limit.

\subsection{Outline of the paper}Each of the four sides of Figure \ref{fig:diagram} consists of four parts (Part I, II, III, and IV). In Section \ref{sec:part1} (Part I), we show that the solutions of the VPFP system converge to the solutions of the PNP system as the Knudsen number $\varepsilon$ tends to zero, which corresponds to the bottom side of Figure \ref{fig:diagram}. To this end, we use the theoretical result from the paper \cite{MR3294407}. In Section \ref{sec:methodology}, we will introduce in detail our Deep Learning method to approximate the solution of the VPFP system and the solution of the PNP system, which is used for the numerical simulations in Part II, III, and IV. Part II will include the detailed descriptions on the DNN architectures for each system (Section \ref{subsec:dnn_architerture}), the definition of grid points (Section \ref{subsec:grid}), and a `Grid Reuse' method that is a newly devised tool in the paper (Section \ref{subsec:grid_reuse}) to capture the dynamics under a small Knudsen number $\varepsilon$.
In Section \ref{sec:part2} (Part II), we will introduce the DNN approximated solutions to the VPFP system \eqref{VPFP}, which corresponds to the left-hand side of Figure \ref{fig:diagram}. We will provide the suitable loss functions \eqref{VPFP_loss_total} to approximate the VPFP system using the Deep Learning in Section \ref{subsec:part2_loss}. We will prove the convergence of the DNN solution to an analytic solution of the VPFP system as the loss function vanishes in Section \ref{subsec:part2_theory}. We will also provide the numerical simulations that show the asymptotic behaviors of macroscopic quantities and the pointwise values of the DNN solution to the VPFP system in Section \ref{subsec:part2_simulation}.
In Section \ref{sec:part3} (Part III), we will introduce the DNN approximated solutions to the PNP system \eqref{PNP}, which corresponds to the right-hand side of Figure \ref{fig:diagram}. The contents would be analogous to those in Section \ref{sec:part2}. In Section \ref{sec:part4} (Part IV), we will provide several numerical simulations to see the trend of the diffusion limit from the VPFP system to the PNP system, which corresponds to the upper side of Figure \ref{fig:diagram}. We will analyze the convergence \eqref{converge_f} and \eqref{converge_E} using the DNN solutions of the VPFP system by varying the Knudsen number from 1 to 0.05 via the Asymptotic-Perserving (AP) scheme. Finally, in Section \ref{sec:conclusion}, we will summarize our methods and the results.

\section{Part I. On convergence of the VPFP solution to the PNP solution}\label{sec:part1}
In this section, we introduce the convergence of solutions of the VPFP system to a solution of the PNP system from the recent paper \cite[Theorem 2.1]{MR3294407}. Wu, Lin and Liu \cite{MR3294407} prove that the VPFP system \eqref{VPFP} with the Maxwellian reflection boundary condition converges to the PNP system \eqref{PNP} as $\varepsilon$ tends to zero for the multi-species model case. To be more specific, they consider the renormalized solution $(f_{\varepsilon,i},\Phi_\varepsilon)$ of a rescaled $N$-species VPFP system ($i=1,2,...,N$) in a bounded interval $\Omega\subset\R^d$ using the scaled parameters as
\begin{equation}\label{Wu_VPFP}\begin{split}
\partial_t f_{\varepsilon,i}+\frac{1}{\varepsilon} v\cdot\nabla_xf_{\varepsilon,i}-\frac{\kappa_iz_i}{\varepsilon} \nabla_x\Phi_\varepsilon\cdot\nabla_vf_{\varepsilon,i} &= \frac{\zeta_i}{\varepsilon^2}\nabla_v\cdot(vf_{\varepsilon,i}+ \kappa_i\nabla_v f_{\varepsilon,i}),\\
-\varpi\Delta_x\Phi_\varepsilon &= \sum_{i=1}^Nz_i\int_{\R^d}f_{\varepsilon,i}(t,x,v)dv-h(x),
\end{split}
\end{equation}
with initial condition, reflection boundary condition (especially, Maxwellian boundary condition) for the distribution function $f_{\varepsilon,i}$ and zero-outward electric field condition (Neumann boundary condition) for the electric potential $\Phi_\varepsilon$. They show that the solution $(f_{\varepsilon,i},\Phi_\varepsilon)$ converges to $(\rho_i(t,x)M_i(v),\Phi(t,x))$, where $(\rho_i, \Phi)$ is a weak solution of the PNP system in a bounded interval $\Omega$ as
\begin{equation}\label{Wu_PNP}\begin{split}
\partial_t \rho_i + \nabla_{x}\cdot\overbrace{\left( -\frac{1}{\zeta_i}\nabla_x\rho_i-\frac{z_i}{\zeta_i}\rho_i\nabla_x\Phi \right)}^{J_i(t,x)\eqdef}&=0,\\
-\varpi\Delta_x\Phi&=\sum_{i=1}^Nz_i\rho_i-h(x),
\end{split}
\end{equation}
with the initial-boundary conditions given as follows
\begin{equation}\label{Wu_bdry}\begin{split}
J_i\cdot n=0,\ \text{on} \ \partial\Omega,\\
\nabla_x \Phi\cdot n =0,\ \text{on} \ \partial\Omega,
\end{split}
\end{equation}
as $\varepsilon$ tends to zero ($M_i=M_i(v)$'s are the normalized Maxwellians for each species).

Using this result, we derive our specific system \eqref{VPFP} with the boundary condition. Firstly, we specify the 1-dimension bounded domain $\Omega=(-1,1)\subset\R$ on the spatial domain and $\R$ on the velocity domain. Also, we consider the single-species case with $N=1$. This case is reasonable in plasma physics when the relatively huge ions are supposed to be static in the background. In this case, we denote the distribution function $f_{\varepsilon,i}$ as $f_\varepsilon$ for the VPFP system, since $i=1$. 
We also choose the classical specular reflection boundary condition for the $f_\varepsilon(t,x,v)$ instead of the Maxwellian boundary condition used in \cite{MR3294407}. We use the Dirichlet boundary condition for the electric force $E_\varepsilon(t,x)$ which is the same as the Neumann boundary condition for the electric field $\Phi_\varepsilon(t,x)$ assumed in \cite{MR3294407}. The boundary conditions \eqref{Wu_bdry} imply the Neumann condition ($\nabla_x\rho_i\cdot n=0$ on $\partial\Omega$) for the density function and the Dirichlet condition ($E=\nabla_x\Phi\cdot n=0$ on $\partial\Omega$) for the electric force $E(t,x)$. Additionally, we set all the parameters to be 1 in the systems \eqref{Wu_VPFP} and \eqref{Wu_PNP} except the Knudsen number $\varepsilon$ to take the limit. 

Then, the solution $f_\varepsilon$ (corresponding to the $f_{\varepsilon,i=1}$) and the solution $E_\varepsilon=\partial_x\Phi_\varepsilon$ to the VPFP system \eqref{VPFP} with the specular boundary condition \eqref{specular} satisfy the following convergence:
\begin{equation}\label{converge_f}
	f_\varepsilon(t,x,v) \rightarrow \rho(t,x) M(v)\quad\text{in}\quad L^1(0,T;L^1(\Omega\times\R)),
\end{equation}
where $M(v)=\frac{1}{\sqrt{2\pi}}e^{-\frac{v^2}{2}}$ and
\begin{equation}\label{converge_E}
	E_\varepsilon(t,x) \rightarrow E(t,x)\quad\text{in}\quad L^2(0,T;L^p(\Omega)),\quad1\leq p<2
\end{equation}
as the Knudsen number $\varepsilon$ tends to zero, where the density $\rho$ (corresponding to the $\rho_{i=1}$) and the solution $E$ satisfy the system \eqref{PNP} with the no-flux boundary condition \eqref{no-flux}. In the Part IV (Section \ref{sec:part4}) of this paper, we provide the corresponding numerical simulations which show the trend of the convergence \eqref{converge_f} and \eqref{converge_E}. 

\begin{remark}\label{remark:zero-mean} In \cite{MR3294407}, they prove the diffusion limit with two assumption for $\Phi_\varepsilon$ on the Poisson equation; the global neutrality condition and the zero-mean constraint. The global neutrality condition is the same as the condition \eqref{VPFP_neutrality} we assumed. They also assume the zero-mean constraint as follows:
\begin{equation}\label{zero-mean}
	\int_\Omega\Phi_\varepsilon dx=0.
\end{equation}
This constraint is necessary to uniquely determine the solution $\Phi_\varepsilon$. However, we are interested in the solution $E_\varepsilon(t,x)=-\partial_x\Phi_\varepsilon(t,x)$ which is the partial of $\Phi(t,x)$ instead of $\Phi_\varepsilon(t,x)$ in this paper. Without loss of generality, we can assume \eqref{zero-mean} to apply the diffusion limit theorem from \cite{MR3294407}.
\end{remark}

\section{Simulation methodology: The Deep Learning approach}\label{sec:methodology}
In this section, we introduce our deep learning method to solve the Cauchy problem to the Vlasov-Poisson-Fokker-Planck system \eqref{VPFP} and the Poisson-Nernst-Planck (PNP) system \eqref{PNP}.

\subsection{A Deep Learning approach for solving partial differential equation}\label{subsec:method}
A Deep Learing algorithm can be described in terms of a non-linear function approximation method using a 
Deep Neural Network (DNN). A Deep Neural Network consists of a sequence of multiple layers. Each layers has several neurons, which receive the neuron activation from the pre-layer as input. The neurons implement the weighted sum of the input and apply an activation function in order to transform the output to a non-linear one. The output is transmitted to neurons in the post-layer. We assume that a DNN has $L$ layers; it has an input layer, $L-1$ hidden layers and an output layer ($L-$th layer). Similarly to the explanation of \cite{MR4116803}, we denote the relation between the $l-$th layer and the $(l+1)-$th layer ($l=1,2,...,L-1$) as
\begin{equation*}
	z_j^{(l+1)}=\sum_{i=1}^{m_l}w_{ji}^{(l+1)}\bar{\sigma}_{l}(z_i^l) + b_j^{(l+1)},
\end{equation*}
where $m=(m_0,m_1,m_2,...,m_{L-1})$, $w=\{w^{(k)}_{ji}\}_{i,j,k=1}^{m_{k-1},m_k,L},$ $b=\{b^{(k)}_j\}_{j=1,k=1}^{m_k,L},$ and
\begin{itemize}
	\item $z_i^l$ : the $i$-th neuron in the $l$-th layer
	\item $\bar{\sigma}_l$ : the activation function in the $l$-th layer
	\item $w_{ji}^{(l+1)}$ : the weight between the $i$-th neuron in the $l$-th layer and the $j$-th neuron in the $(l+1)$-th layer
	\item $b_j^{(l+1)}$ : the bias of the $j$-th neuron in the $(l+1)$-th layer
	\item $m_l$ : the number of neurons in the $l$-th layer.
\end{itemize}
Note that the relation between the input layer and the first hidden layer is expressed as follows:
\begin{equation*}
	z_j^{1}=\sum_{i=1}^3 w_{ji}^1 z_i^0 + b_j^{1},
\end{equation*}
where $(z_1^0, z_2^0, z_3^0)=(t,x,v)$.

The deep learning algorithm learns the complex nonlinear mapping by adapting these weights $w_{ji}^{(l+1)}$ and biases $b_j^{(l+1)}$ to make the output of Deep Neural Network similar to the target function, in our case, the solution of the VPFP and PNP system. The Deep learning uses the back-propagation learning algorithm, which applies the chain rule to calculate the influence of each weight and each bias to reduce a pre-defined cost function, which is called ``loss function" in the Deep Learning. Then, the algorithm uses the gradient method to update the weights and biases.

To approximate a solution of PDEs using the deep learning algorithm, we need an appropriate loss function with respect to the PDE system. For example, suppose we coinsider the following parabolic PDE:
\begin{align*}\notag
\frac{\partial u}{\partial t}(t,x)+\mathcal{L}u(t,x)&=0,\hspace{3mm}(t,x)\hspace{1mm}\text{in}\hspace{1mm}[0,T]\times\Omega,
\\u(0,x)&=u_0(x),\hspace{3mm}x\hspace{1mm}\text{in}\hspace{1mm}\Omega,
\\\mathcal{B}u(t,x)&=q(x),\hspace{3mm}(t,x)\hspace{1mm}\text{in}\hspace{1mm}[0,T]\times\partial\Omega,
\end{align*}
where $\mathcal{L}$ is a differential operator and $\mathcal{B}$ is the boundary operator with known functions $u_0(x)$ and $q(x)$. In many  papers (e.g. \cite{MR3874585,MR3881695}), they approximate the solution $u(t,x)$ using the DNN output $u^{nn}(t,x)$ with the loss function as
\begin{multline}
	Loss(u^{nn})=\left\|\frac{\partial u^{nn}}{\partial t}(t,x)+\mathcal{L}u^{nn}(t,x)\right\|^2_{L^2([0,T]\times\Omega)}\\+\left\|u^{nn}(0,x)-u_0(x)\right\|^2_{L^2(\Omega)}+\left\|\mathcal{B}u^{nn}(t,x)-q(t,x)\right\|^2_{L^2([0,T]\times\partial\Omega)}.
\end{multline}

The proposed loss function is an intuitive one to approximate the solution of PDE. In our case, we propose slightly different loss functions for each system. We define the loss function for the VPFP system in Section \ref{subsec:part2_loss} (Part II) and for the PNP system in Section \ref{subsec:part3_loss} (Part III). We propose the loss functions based on our theoretical evidence. In each section, we prove that the DNN output to the VPFP system and PNP system converges to a \textit{priori} classical solution to each system if the proposed loss function goes to zero. The details are precisely described in Part II and Part III.

\subsection{Our Deep Learning algorithm and the architecture}\label{subsec:dnn_architerture}
We take two different neural network structures which share the same inputs to approximate the coupleded nonlinear equations. Each DNN has four hidden layers and each layer has 3(or 2)-100-100-100-100-1 neurons. For the VPFP system, the two Deep Neural Networks are used to approximate the solutions, $f$ and $E$, respectively. The neural network structure is precisely shown in Figure \ref{fig:DNN_structure_VPFP}.
\begin{figure}[H]
  \includegraphics[width=0.8\textwidth, draft=false]{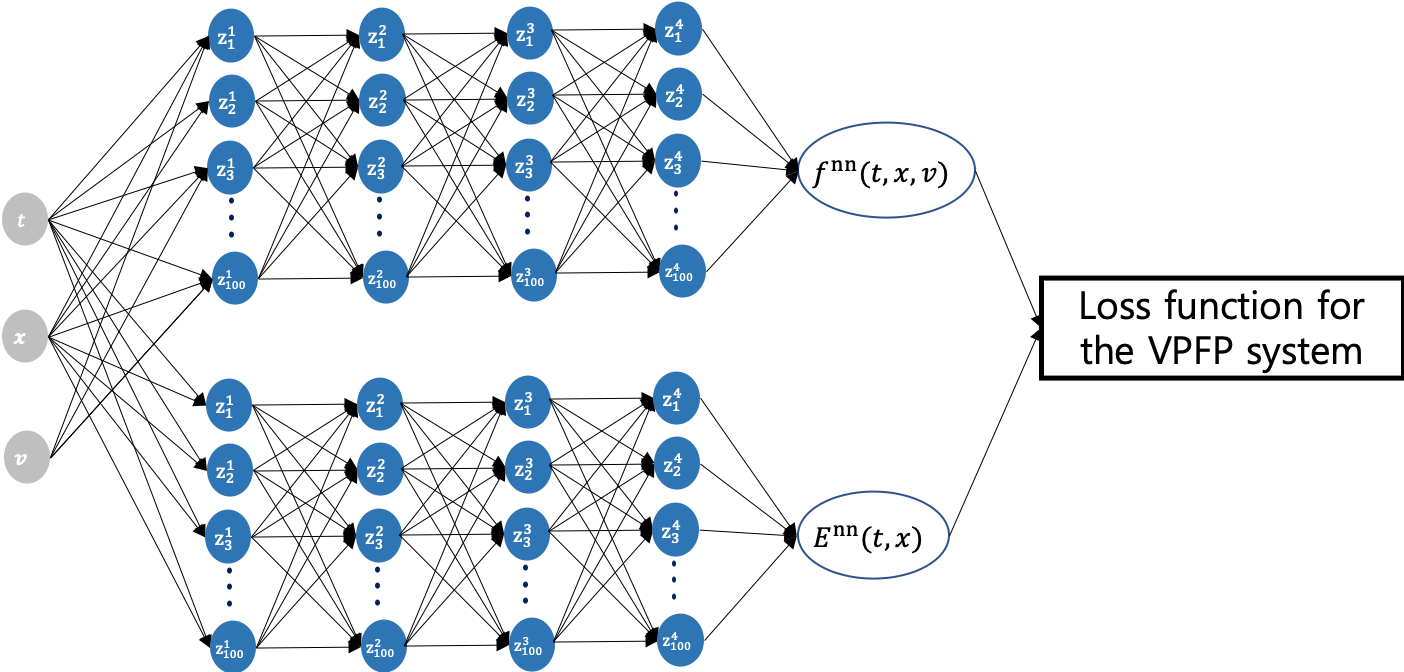}
  \caption{The DNN structure for the VPFP system}
  \label{fig:DNN_structure_VPFP}
\end{figure}
We denote the approximated solution as $(f_\varepsilon^{nn}(t,x,v;m,w,b),E_\varepsilon^{nn}(t,x;m,w,b))$, which consists of the output of each DNN. The two outputs $f_\varepsilon^{nn}(t,x,v;m,w,b)$ and $E_\varepsilon^{nn}(t,x;m,w,b)$ are used to calculate the pre-defined loss function. Then, we use a gradient descent algorithm to update the weights and biases of our model's parameters by iteratively moving in the direction of reducing the loss function. In this work, we use the Adam (Adaptive Moment Estimation) optimizer, an efficient variant of the stochastic gradient descent algorithm which is widely used in deep learning applications due to quick convergence in training. 

Similarly, we use the two Deep Neural Networks to approximate the solutions $(\rho,E)$ for the PNP system as in Figure \ref{fig:DNN_structure_PNP}. We denote the approximated solution as $(\rho^{nn}(t,x;m,w,b),E^{nn}(t,x;m,w,b))$.
\begin{figure}[H] 
  \includegraphics[width=0.8\textwidth, draft=false]{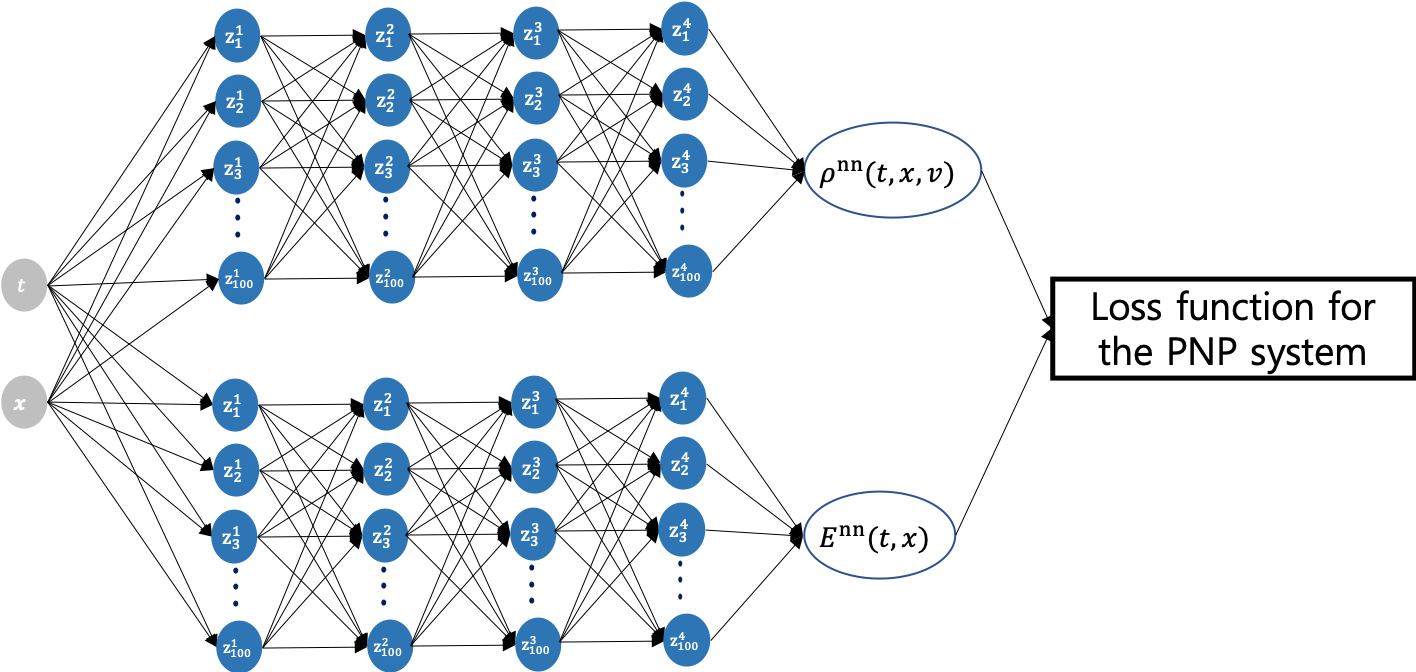}
  \caption{The DNN structure for the PNP system}
  \label{fig:DNN_structure_PNP}
\end{figure}

For the four hidden layers in each DNN, we use the hyper-tangent activation function ($\bar{\sigma}(x)=\frac{e^x-e^{-x}}{e^x+e^{-x}}$), which is the common activation function in Deep Learning literature. While the choice of the activation function for the hidden layers is quite clear, the choice of an activation function for the output layer depends on the purposes. We use the Softplus activation function ($\bar{\sigma}(x)=\ln (1+e^x)$) only for the output $f_\varepsilon^{nn}(t,x,v;m,w,b)$. It is one of the main issues to preserve the positivity of the output when the numerical scheme is constructed. Since the Softplus function has outputs in scale of $(0,+\infty)$, we easily apply the positivity constraint for the output $f_\varepsilon^{nn}(t,x,v;m,w,b)$.

We use the PyTorch library for deep learning. It is one of the most standard deep learning frameworks due to its simplicity and ease of use. We also use the Adam optimizer in PyTorch library with the Learning rate scheduling, which adjusts the learning rate based on the number of epochs. Regarding the loss function, we need the derivation and integration of the output with respect to the variables $t$, $x$ and $v$. To approximate the derivatives of the neural network output with respect to the input variables, we use the Autograd package in PyTorch library. It provides Automatic Differentiation (AD), which is one of the powerful techniques in scientific computing. The AD is different from the usual differentiation methods, such as numerical differentiation or the symbolic differentiation. We refer to the survey papers \cite{paszke2017automatic,MR3800512} for more details. Also, we use the trapezoidal rule from the PyTorch library to approximate the integration. The specific loss functions we defined for the VPFP system and the PNP system are explained in Part II (VPFP) and Part III (PNP).

\subsection{Training data: grid points}\label{subsec:grid}
To approximate the solutions to the VPFP system and the PNP system via the Deep Learning algorithm, we make the grid points for each variable domain as inputs in the neural networks. We need three-dimensional time-space-velocity grid for the probability density $f_\varepsilon^{nn}(t,x,v;m,w,b)$ in VPFP system and two-dimensional time-space grid for the density $\rho^{nn}(t,x)$ in PNP system and the force field $E_\varepsilon^{nn}(t,x;m,w,b)$ to the VPFP system and $E^{nn}(t,x;m,w,b)$ to the PNP system. We choose the time interval $[0,T]$ as $[0,5]$ only for the VPFP system with $\varepsilon=1$ and $[0,1]$ with the smaller Knudsen number $\varepsilon$, which is enough to see the steady-state of both the VPFP system and the PNP system. Also, we truncate the momentum space for the $v$ variable as $V\eqdef[-10,10]$ for training and assume that $f_\varepsilon^{nn}(t,x,v;m,w,b)$ is $0$ if $|v|>10$. Note that we sample the grid points for the each variables $t$, $x$ and $v$ randomly for each iteration. Compared to the grid created by dividing the domain uniformly, this sampling-based approach has the effect of selecting infinite grids in the each domain. More precisely, the grid points for training $f_\varepsilon^{nn}(t,x,v;m,w,b)$ are chosen randomly as follows:
\begin{equation}\label{data_ge}
	\left\{ (t_i,x_j,v_k) \right\}_{i,j,k} \in [0,T] \times \Omega \times V
\end{equation}
for the governing equation,
\begin{equation}\label{data_ini}
	\left\{ (t=0,x_j,v_k) \right\}_{j,k} \in \Omega \times V
\end{equation}
for the initial condition and
\begin{equation}\label{data_bdry}
	\left\{ (t_i,x=-1\text{ or }1,v_k) \right\}_{i,k} \in [0,T] \times  V
\end{equation}
for the boundary condition with $T=1$ or $T=5$, $\Omega=[-1,1]$ and $V=[-10,10]$. In every epoch, we sample the grid points for the time $t$ as $\{t_i\}_{i=1}^{10}$, for the position $x$ as $\{x_j\}_{j=1}^{10}$, and for the velocity $v$ as $\{v_k\}_{k=1}^{1000}$. We use a larger number of velocity grids than the time and position grids to approximate the integration with respect to the velocity in the VPFP system \eqref{VPFP}$_3$. We can choose the grid points similarly for $\rho^{nn}(t,x)$, $E_\varepsilon^{nn}(t,x)$, and $E^{nn}(t,x)$. 

\subsection{`Grid Reuse' method to capture the small Knudsen number}\label{subsec:grid_reuse}
In Part IV, we provide the numerical simulations when the Knudsen number $\varepsilon$ is small. It is hard to capture the asymptotic limit with the fixed numerical discretization in numerical schemes. 

To overcome this challenge, we propose a newly devised technique in this paper; we call it `Grid Reuse' method. The Deep Neural Network is trained to minimize the sum of loss functions at randomly sampled grid points in every epoch, as explained in \eqref{data_ge}, \eqref{data_ini}, and \eqref{data_bdry}. The idea of our `Grid Reuse' method is that we add more top$-k$ grid points of these randomly sampled grid points to use for training in the next epoch. The top$-k$ grid points mean that the grid points have the largest top$-k$ values of every epoch's loss function.  The `Grid Reuse' method helps to solve the time dependency, which is the main difficulty of capturing the diffusion limit. In order to use the trapezoidal rule to approximate $\int_V f_\varepsilon^{nn}(t,x,v;m,w,b) dv$, we only catch the time and position grid points where loss function has the largest values. We then make the three-dimensional time-space-velocity grid with a randomly sampled velocity grid in $V=[-10,10]$.

The `Grid Reuse' method is inspired by the Residual-based adaptive refinement (RAR) method in \cite{lu2019deepxde} and the adaptive collocation method in \cite{anitescu2019artificial}. The technique of these methods and our method are similar to the adaptive mesh refinement method in numerical analysis.

\subsection{Summary of Deep Learning algorithm}
Finally, we summarize our Deep Learning algorithm for the VPFP system as follows:
\begin{algorithm}[H]
\caption{Deep Learning algorithm for the VPFP system}\label{algorithm}
\begin{algorithmic}[1]
\For{number of epochs}
	\State \textbf{Sampling data:}
	   	\State\hskip1.5em Sample $m$ samples $t_1, t_2,..., t_m$ from [0,1] (or [0,5]).
	    \State\hskip1.5em Sample $n$ samples $x_1, x_2,..., x_n$ from [-1,1].
	    \State\hskip1.5em Sample $p$ samples $v_1, v_2,..., v_p$ from [-10,10].
	    \State\hskip1.5em Make a pair the samples to set the training data as \eqref{data_ge}, \eqref{data_ini} and \eqref{data_bdry}.
    	\State\hskip1.5em Add new top-$k$ training data paired with the velocity samples.
    \State \textbf{Evaluate the loss function:}
    	\State\hskip1.5em  Approximate the derivative of the DNN output (Autograd).
    	\State\hskip1.5em  Approximate the integration of the DNN output (Trapezoidal rule).
    	\State\hskip1.5em  Evaluate the loss function for the VPFP system \eqref{VPFP_loss_total}.
	\State \textbf{Updating parameters:}
    	\State\hskip1.5em  Update neural network parameters using the Adam optimizer:
	    \begin{align*}
	        w&\leftarrow w^{\text{new}},\\
	        b&\leftarrow b^{\text{new}},
	    \end{align*}
	    \State\hskip1.5em in the direction of minimizing the pre-defined loss function.
    \State \textbf{Grid Reuse technique:}
    	\State\hskip1.5em Save top$-k$ grid points $\{t_{\alpha_i},x_{\beta_i}\}_{i=1}^k$ which has the largest loss value.
\EndFor
\end{algorithmic}
\end{algorithm}

We also apply a similar Deep Learning algorithm to the PNP system.

\section{Part II. On convergence of DNN solutions to an analytic solution to the VPFP system and simulation results}\label{sec:part2}
In this section, we provide a DNN solution to the VPFP system. This section consists of three subsections. First, we propose the loss functions of the VPFP system for deep learning. We also prove the convergence of DNN solutions to an analytic solution of the VPFP system in two steps. Finally, we show that the simulation results on DNN solutions to the VPFP system agree with theoretical results by comparing the time-asymptotic behaviors and the macroscopic physical quantities which are defined in Section \ref{subsubsec:VPFP_equilibrium_quantities}.

We will focus on the VPFP system \eqref{VPFP} when the Knudsen number $\varepsilon$ is 1 in this section. The fixed Knudsen number can be arbitrarily chosen. For the sake of simplicity, we abuse notations and write $f_\varepsilon(t,x,v)$ as $f(t,x,v)$ and $E_\varepsilon(t,x)$ as $E(t,x)$ in this section. Later, in Part IV (Section \ref{sec:part4}), we consider the varied Knudsen number regimes.

\subsection{Loss functions for the VPFP system}
\label{subsec:part2_loss}
In Algorithm \ref{algorithm}, the Adam optimizer finds the optimal parameters $w^{\text{new}}$ and $b^{\text{new}}$ in the direction of minimizing a loss function. Thus, we need to define the loss functions for the Vlasov-Poisson-Fokker-Planck system: $Loss^{fp}_{GE}$ for the VPFP system \eqref{VPFP}$_1$ and \eqref{VPFP}$_3$, $Loss^{fp}_{IC}$ for the initial condition \eqref{VPFP}$_2$ and \eqref{VPFP}$_4$, $Loss^{fp}_{BC}$ for the boundary conditions \eqref{specular} and \eqref{VPFP}$_5$. Note that we use the superscript $Loss^{fp}$ for all loss functions to the VPFP system to distinguish it from the superscript $Loss^{pnp}$ used for the loss functions to the PNP system in Section \ref{subsec:part3_loss}.

First, we define the following loss functions for the governing equation \eqref{VPFP} as
\begin{multline}
  Loss^{fp}_{GE^{(1)}}(f^{nn})\\
  \eqdef \int_{(0,T)}dt\int_{(-1,1)}dx\int_{V}dv |\partial_t f^{nn}(t,x,v;m,w,b)+v \partial_x f^{nn}(t,x,v;m,w,b)\\
  +E^{nn}\partial_{v}f^{nn} - (\partial_{vv}f^{nn}(t,x,v;m,w,b) + \partial_v (vf^{nn})(t,x,v;m,w,b))|^2,
\end{multline}
and
\begin{multline}
  Loss^{fp}_{GE^{(2)}}(f^{nn})\\
  \eqdef \int_{(0,T)}dt\int_{(-1,1)}dx |\partial_x E^{nn}(t,x;m,w,b)-\int_V dv \ f^{nn}(t,x,v;m,w,b)|^2,
\end{multline}
where $V\eqdef [-10,10].$ Then we define $Loss^{fp}_{GE}$ as
\begin{equation}\label{VPFP_loss_ge}
Loss^{fp}_{GE}(f^{nn}) \eqdef Loss^{fp}_{GE^{(1)}} + Loss^{fp}_{GE^{(2)}}
\end{equation}
We now define the loss function for the initial condition via the use of the initial grid points as
\begin{equation}
Loss^{fp}_{IC^{(1)}}(f^{nn}) \eqdef \int_{(-1,1)}dx\int_{V} dv\left|f^{nn}(0,x,v)-f_0(x,v)\right|^2,
\end{equation}
and
\begin{multline}
Loss^{fp}_{IC^{(2)}}(f^{nn})\\
\eqdef \int_{(-1,1)}dx\left|E^{nn}(0,x;m,w,b) - \left(\int_{-1}^x dy\int_{\mathbb{R}}dvf_0(y,v)-(x+1)\right)\right|^2.
\end{multline}
Note that we use the equation \eqref{VPFP_E_ini} for the loss function $Loss_{IC}^2$. Then, we define $Loss_{IC}$ as 
$$
  Loss^{fp}_{IC}(f^{nn}) \eqdef Loss^{fp}_{IC^{(1)}} + Loss^{fp}_{IC^{(2)}}.
$$
The loss functions for the \textit{specular} boundary condition for $f$ in Section \ref{subsec:boundary_VPFP} and the Dirichlet boundary condition for $E$ \eqref{VPFP}$_5$ are defined as
\begin{multline}
Loss^{fp}_{BC^{(1)}}(f^{nn})
\\ \eqdef \int_{(0,T)}dt\int_{\gamma_-} dxdv\left|f^{nn}(t,x,v;m,w,b)-f^{nn}(t,x,-v;m,w,b)\right|^2,
\end{multline}
and
\begin{equation}
Loss^{fp}_{BC^{(2)}}(f^{nn})\eqdef\int_{(0,T)}dt\sum_{x\in\{-1,1\}}|E^{nn}(t,x;m,w,b)|^2.
\end{equation}
Then we define the total loss for the boundary conditions as
$$
Loss^{fp}_{BC}(f^{nn}) \eqdef Loss^{fp}_{BC^{(1)}} + Loss^{fp}_{BC^{(2)}}.
$$
Finally, we define the total loss as
\begin{equation}\label{VPFP_loss_total}
  Loss^{fp}_{Total}(f^{nn}) \eqdef Loss^{fp}_{GE} + Loss^{fp}_{IC} + Loss^{fp}_{BC}.
\end{equation}
Note that we compute these loss functions via the approximation of the integration by the Riemann sum on the grid points, which is explained in Section \ref{subsec:grid}. For example, the loss function $Loss^{fp}_{GE}(f^{nn})$ can be approximated as
\begin{multline}
  Loss^{fp}_{GE}\approx \frac{1}{N_{i,j,k}}\sum_{i,j,k} \bigg|\partial_t f^{nn}(t_i,x_j,v_k;m,w,b) +v \partial_x f^{nn}(t_i,x_j,v_k;m,w,b)\\
  +E^{nn}\partial_{v}f^{nn} - (\sigma\partial_{vv}f^{nn}(t_i,x_j,v_k;m,w,b) + \beta \partial_v (vf^{nn})(t_i,x_j,v_k;m,w,b))\bigg|^2 \\
  + \frac{1}{N_{i,j}}\sum_{i,j}\bigg|\partial_x E^{nn}(t_i,x_j;m,w,b)-\int_V dv \ f^{nn}(t_i,x_j,v_k;m,w,b)\bigg|^2,
\end{multline}
where $N_{i,j,k}$ and $N_{i,j}$ are the number of grid points.

\subsection{On convergence of DNN solutions to analytic solutions to the VPFP system}\label{subsec:part2_theory}
In this section, we show the convergence of DNN solutions to analytic solutions to the VPFP system \eqref{VPFP} in two steps. We first prove that there exists a sequence of neural network parameters (neuron numbers $m$, weights $w$ and biases $b$ as defined in Section \ref{subsec:method}) such that the total loss function $Loss^{fp}_{Total}$ converges to 0. Sequentially, we also prove that if we minimize the total loss function $Loss^{fp}_{Total}$, it implies that the Deep Neural Network solution converges to an analytic solution. Throughout the section, we assume that the existence and the uniqueness of solutions for the VPFP system \eqref{VPFP} with the specular boundary condition \eqref{specular} are a priori given.

We first introduce the following definition and the theorem from \cite{li1996simultaneous} on the existence of approximated neural network solutions:
\begin{definition}[Li, \cite{li1996simultaneous}]\label{C_hat} For a compact set $K$ of $\mathbb{R}^n$, we say $f\in \widehat{C}^m(K)$, $m\in \mathbb{Z}_+^n$ if there is an open $\Omega$ (depending on $f$) such that $K\subset \Omega$ and $f\in C^m(\Omega).$
\end{definition}

\begin{theorem}[Li, Theorem 2.1, \cite{li1996simultaneous}]\label{VPFP_global} Let $K$ be a compact subset of $\mathbb{R}^n$, $n\ge 1$, and $f\in\widehat{C}^{m_1}(K)\cap\widehat{C}^{m_2}(K)\cap \cdots \widehat{C}^{m_q}(K)$, where $m_i \in \mathbb{Z}^n_+$ for $1\le i\le q$. Also, let $\bar{\sigma}$ be any non-polynomial function in $C^l(\mathbb{R})$, where $l=\max\{|m_i|:1\le i\le q\}$. Then for any $\epsilon>0,$ there is a network
$$f^{nn}(x)=\sum_{i=0}^\nu c_i\bar{\sigma}(\langle w_i,x\rangle +b_i), \ x\in \mathbb{R}^n,$$ where $c_i\in \mathbb{R},$ $w_i\in \mathbb{R}^n$, and $b_i\in \mathbb{R}$, $0\le i\le \nu$ such that 
$$\|D^kf-D^kf^{nn}\|_{L^{\infty}(K)}<\epsilon,$$
for $k\in \mathbb{Z}^n_+$, $k\le m_i$, for some $i$, $1\le i\le q.$
\end{theorem}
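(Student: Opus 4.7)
The plan is to prove density of the family of one-hidden-layer networks
\[
\mathcal{N}_{\bar{\sigma}} = \left\{\sum_{i=0}^\nu c_i \bar{\sigma}(\langle w_i,x\rangle + b_i) : \nu\in\mathbb{N},\, c_i,b_i\in\mathbb{R},\, w_i\in\mathbb{R}^n\right\}
\]
in the $C^l(K)$ topology, where $l = \max_{1\le i\le q}|m_i|$. Granted such density, a single approximant $f^{nn}\in\mathcal{N}_{\bar{\sigma}}$ with $\|f-f^{nn}\|_{C^l(K)}<\epsilon$ automatically satisfies $\|D^k f - D^k f^{nn}\|_{L^{\infty}(K)}<\epsilon$ for every admissible $k$. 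By Definition \ref{C_hat}, the hypothesis $f\in\bigcap_i \widehat{C}^{m_i}(K)$ furnishes extensions of $f$ to $C^l$ on an open neighborhood of $K$, which after multiplication by a smooth cutoff may be taken as a compactly supported $C^l$ representative on $\mathbb{R}^n$. I would fix such an extension throughout.

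The core is a reduction to polynomial density via the parameter-derivative identity
\[
\frac{d^p}{dt^p}\bar{\sigma}(t\langle a, x\rangle + b)\bigg|_{t=0} = \langle a, x\rangle^p\,\bar{\sigma}^{(p)}(b), \qquad p\le l,
\]
combined with the observation that, because $\bar{\sigma}$ is a non-polynomial element of $C^l(\mathbb{R})$, for each $p\le l$ there exists $b_p$ with $\bar{\sigma}^{(p)}(b_p)\neq 0$ — otherwise $\bar{\sigma}^{(p)}\equiv 0$ would force $\bar{\sigma}$ to be a polynomial of degree less than $p$. Replacing the $t$-derivative at $0$ by a finite difference,
\[
\frac{1}{h^p}\sum_{j=0}^{p} (-1)^{p-j}\binom{p}{j}\bar{\sigma}(jh\langle a, x\rangle + b_p)\,\xrightarrow{h\to 0}\,\langle a, x\rangle^p\,\bar{\sigma}^{(p)}(b_p),
\]
exhibits the ridge monomial $\langle a,x\rangle^p$ (after dividing by $\bar{\sigma}^{(p)}(b_p)$) as a $C^l$-limit of explicit elements of $\mathcal{N}_{\bar{\sigma}}$. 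A polarization argument over $\binom{n+p-1}{p}$ generic choices of $a$ then recovers every pure monomial $x^\alpha$ with $|\alpha|=p$. Repeating for $p=0,1,\ldots,l$ yields every polynomial of degree $\le l$ in the $C^l(K)$-closure of $\mathcal{N}_{\bar{\sigma}}$. The proof concludes by invoking the classical fact that polynomials are $C^l$-dense in $C^l(K)$ for compact $K$ — provable by mollifying the extension of $f$ and truncating its Taylor expansion — and combining the two approximations by a diagonal argument.

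The main obstacle is upgrading the $h\to 0$ convergence of the finite difference from uniform to $C^l$: since the scheme's coefficients blow up like $h^{-p}$, controlling the $k$-th spatial derivative of the approximant requires Taylor-expanding $\bar{\sigma}^{(|k|)}(jh\langle a,x\rangle + b_p)$ in powers of $jh\langle a,x\rangle$ to order $p-|k|$ and exploiting the moment identities
\[
\sum_{j=0}^p (-1)^{p-j}\binom{p}{j}\, j^q = p!\,\delta_{q,p}, \qquad 0\le q\le p,
\]
to cancel every lower-order contribution, with the remainder bounded by $\|\bar{\sigma}^{(l+p)}\|_{L^\infty}$ on a compact set containing the range of the argument. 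This is the step that genuinely uses the regularity hypothesis $\bar{\sigma}\in C^l$; without it the derivative estimates collapse. The extension step and the polarization identity are routine by comparison.
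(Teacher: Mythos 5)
First, note that the paper does not prove this statement at all: it is quoted verbatim from Li \cite{li1996simultaneous} and used as a black box, so there is no in-paper proof to compare against; your attempt has to be judged against the statement itself. Your core mechanism is the standard one (essentially the divided-difference argument in Pinkus-style proofs): recover ridge monomials $\langle a,x\rangle^p\,\bar{\sigma}^{(p)}(b_p)$ as limits of finite differences of $\bar{\sigma}(jh\langle a,x\rangle+b_p)$, use the moment identities to upgrade the convergence to spatial derivatives, span all monomials of degree $\le l$ by varying $a$, and finish with polynomial density. That part is sound, with one technical misstatement: your remainder bound invokes $\|\bar{\sigma}^{(l+p)}\|_{L^\infty}$, which is not available since $\bar{\sigma}$ is only assumed to be in $C^l(\mathbb{R})$; the argument still closes because the highest derivative you actually need is $\bar{\sigma}^{(p)}$ with $p\le l$, and the Peano-form remainder together with uniform continuity of $\bar{\sigma}^{(p)}$ on the compact range of the arguments gives the required $o(h^{p-|k|})$ control.

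The genuine gap is at the very first reduction. The classes $\widehat{C}^{m_i}(K)$ are indexed by \emph{multi-indices} $m_i\in\mathbb{Z}_+^n$: membership only provides the mixed derivatives $D^k f$ with $k\le m_i$ componentwise, not all derivatives of total order up to $l=\max_i|m_i|$. Your claim that the hypothesis ``furnishes extensions of $f$ to $C^l$ on an open neighborhood of $K$'' is therefore false in general, and in particular in the very case this paper uses: for $m=(1,1,2)$ one has $l=4$, yet $f$ need not possess $\partial_t^2 f$ or $\partial_v^4 f$, so approximation of $f$ in the $C^l(K)$ norm is not even well defined. The conclusion of the theorem is correspondingly anisotropic: only the seminorms $\|D^k(\cdot)\|_{L^\infty(K)}$ with $k\le m_i$ for some $i$ are controlled. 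The repair is standard but must be stated: extend $f$ via the open set from Definition \ref{C_hat}, multiply by a cutoff, and mollify to obtain $f_\delta\in C^\infty$ with $D^k f_\delta=(D^k f)*\varphi_\delta\to D^k f$ uniformly on $K$ for every admissible $k\le m_i$; then run your $C^l$-density argument on $f_\delta$ (which genuinely is $C^l$) and conclude by the triangle inequality in the admissible seminorms. Without this intermediate smoothing step, the chain of approximations you propose does not apply to the function $f$ covered by the hypotheses.
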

\begin{remark} We can generalize the result above to the one with several hidden layers (see, \cite{hornik1989multilayer}). Also, we may assume that the architecture is assumed to have only one hidden layer; i.e., $L=2$. 
\end{remark}

Now we introduce our first main theorem which states that a sequence of neural network solutions that makes the total loss function converge to zero exists if a $\widehat{C}^{(1,1,2)}$ solution to the VPFP system exists:
\begin{theorem}[Theorem 3.4 of \cite{MR4116803}]\label{thm:vpfp_loss_0}Assume that the number of layers $L=2$ and that the solution $f$ to \eqref{VPFP} with \eqref{specular} which belongs to $\widehat{C}^{(1,1,2)}([0,T]\times[-1,1]\times V)$, and the activation function $\bar{\sigma}(x)\in C^{(2,2,3)}([0,T]\times [-1,1]\times V)$ is non-polynomial. Then, there exists $\{m_{[j]}, w_{[j]}, b_{[j]}\}_{j=1}^\infty$ such that a sequence of the DNN solutions $f^{nn}$ of Theorem \ref{VPFP_global} with $m_{[j]}$ nodes, denoted by $$\{f_j(t,x,v) = f^{nn}(t,x,v;m_{[j]}, w_{[j]}, b_{[j]})\}_{j=1}^{\infty}$$ satisfies\footnote{Each of $m_{[j]}, w_{[j]}, b_{[j]}$ represents the matrix of the numbers corresponding to $f_j$ for each $j=1,2,...,\infty$. The matrices $m_{[j]}, w_{[j]}, b_{[j]}$ consist of the element represented as $m_{[j],ik}^{(l)}, w_{[j],ik}^{(l)}, b_{[j],ik}^{(l)}$, respectively.}
\begin{equation}\label{VPFP_loss_0}
Loss^{fp}_{Total}(f_j)\rightarrow 0\text{ as }j\rightarrow\infty.
\end{equation}
\end{theorem}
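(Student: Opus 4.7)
The plan is to construct the desired sequence by invoking the universal approximation theorem of Li (the stated Theorem \ref{VPFP_global}) simultaneously on the analytic solution $f$ and on the associated electric field $E$ reconstructed from the Poisson equation, then to show that every piece of $Loss^{fp}_{Total}$ is controlled by the uniform approximation errors of the networks and their derivatives.

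First, I would set $K \eqdef [0,T]\times[-1,1]\times V$ with $V=[-10,10]$, which is compact. Because $f\in \widehat{C}^{(1,1,2)}(K)$ and $\bar\sigma\in C^{(2,2,3)}$ is non-polynomial, Theorem \ref{VPFP_global} produces, for any $\epsilon>0$, a single-hidden-layer network $f^{nn}$ satisfying $\|D^k f - D^k f^{nn}\|_{L^\infty(K)}<\epsilon$ for every multi-index $k\le(1,1,2)$; in particular $f^{nn}$, $\partial_t f^{nn}$, $\partial_x f^{nn}$, $\partial_v f^{nn}$, $\partial_{vv}f^{nn}$ are all uniformly $\epsilon$-close to their analytic counterparts. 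In parallel, from the Poisson equation and the smoothness of $f$, the analytic $E$ lies in $\widehat{C}^{(0,1)}([0,T]\times[-1,1])$, so a second application of Theorem \ref{VPFP_global} produces a network $E^{nn}$ with $\|E-E^{nn}\|_{L^\infty}+\|\partial_x E-\partial_x E^{nn}\|_{L^\infty}<\epsilon$. The combined parameter matrices $(m_{[j]},w_{[j]},b_{[j]})$ are obtained by taking $\epsilon=\epsilon_j\to 0$.

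Next, I would estimate each loss contribution by writing the integrand as (operator applied to the network) minus (operator applied to the analytic solution), which vanishes because $f$ solves \eqref{VPFP}. For $Loss^{fp}_{GE^{(1)}}$, the linear pieces $\partial_t,\ v\partial_x,\ \partial_{vv},\ \partial_v(v\cdot)$ are bounded pointwise by $(1+|v|)\epsilon$ on $K$. The nonlinear piece is handled by the standard splitting
\begin{equation*}
E^{nn}\partial_v f^{nn}-E\partial_v f=(E^{nn}-E)\partial_v f^{nn}+E(\partial_v f^{nn}-\partial_v f),
\end{equation*}
where $\|\partial_v f^{nn}\|_{L^\infty}$ is uniformly bounded (since it converges uniformly to the bounded $\partial_v f$) and $E$ is continuous on a compact set hence bounded; thus the integrand is $O(\epsilon)$ pointwise and $Loss^{fp}_{GE^{(1)}}=O(\epsilon^2)$. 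The term $Loss^{fp}_{GE^{(2)}}$ is direct from the $\epsilon$-closeness of $\partial_x E^{nn}$ and of $\int_V f^{nn}\,dv$ (via dominated convergence). For the initial-condition losses, $Loss^{fp}_{IC^{(1)}}$ follows because $f^{nn}(0,\cdot,\cdot)\to f_0$ uniformly, and $Loss^{fp}_{IC^{(2)}}$ follows from \eqref{VPFP_E_ini} together with the uniform approximation of $E^{nn}(0,\cdot)$. For the specular condition in 1D, $R(x)v=-v$, so $f(t,x,v)=f(t,x,-v)$ on $\gamma_-$ and
\begin{equation*}
|f^{nn}(t,x,v)-f^{nn}(t,x,-v)|\le |f^{nn}(t,x,v)-f(t,x,v)|+|f(t,x,-v)-f^{nn}(t,x,-v)|<2\epsilon,
\end{equation*}
yielding $Loss^{fp}_{BC^{(1)}}=O(\epsilon^2)$ after integration over the bounded boundary set (the weight $|v|$ against $V$ is bounded). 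Finally $Loss^{fp}_{BC^{(2)}}$ is immediate from $E(t,\pm 1)=0$ and the uniform closeness of $E^{nn}$ to $E$. Summing these estimates gives $Loss^{fp}_{Total}(f_j)\le C\epsilon_j^2\to 0$.

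The main obstacle, and the only non-cosmetic one, is the nonlinear coupling $E^{nn}\partial_v f^{nn}$: the strategy crucially requires that the neural network derivatives are uniformly bounded along the sequence, which is why the $L^\infty$ (rather than merely $L^2$) conclusion of Theorem \ref{VPFP_global} applied on a compact set is essential. A secondary point to verify carefully is the regularity transfer from $f$ to $E$ through the Poisson equation with boundary data, but this is straightforward once $f\in \widehat{C}^{(0,1,0)}$ and $h$ are fixed. All other steps are bookkeeping triangle inequalities on bounded domains.
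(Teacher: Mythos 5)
Your argument is correct and is essentially the proof the paper intends: the paper itself gives no details beyond citing Theorem 3.4 of \cite{MR4116803}, and that argument is exactly your scheme --- apply Li's simultaneous $L^\infty$ approximation (Theorem \ref{VPFP_global}) on the compact set $[0,T]\times[-1,1]\times V$ to $f$ and, through the Poisson equation, to $E$, then control each term of $Loss^{fp}_{Total}$ by triangle inequalities, with your splitting of $E^{nn}\partial_v f^{nn}-E\partial_v f$ correctly handling the one nonlinear coupling that is absent from the cited reference. The only caveat, which originates in the paper's definitions rather than in your argument, is that $Loss^{fp}_{GE^{(2)}}$ as written omits the background charge $h(x)$ and compares $\partial_x E^{nn}$ with $\int_V f^{nn}\,dv$ rather than $\int_{\mathbb{R}}f\,dv$, so your claim that this term vanishes implicitly reads the loss consistently with \eqref{VPFP}$_3$ and a negligible velocity tail --- exactly as the paper does later, where $d^{(2)}_{ge,j}$ includes $+h(x)$ and the tail is controlled by \eqref{VPFP_artificial_bdry}.
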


\begin{proof}This is similar to that of Theorem 3.4 of \cite{MR4116803}.
\end{proof}
\begin{remark}
The assumption $f\in\widehat{C}^{(1,1,2)}([0,T]\times[-1,1]\times V)$ can be replaced by a general Sobolev space, since the functions in a Sobolev space can be approximated by continuous functions on a compact set.
\end{remark}
The first main theorem \ref{thm:vpfp_loss_0} provides us that we can find the neural network parameters that reduce the pre-defined total loss function as much as we want. However, it does not imply that the DNN solutions converge to an analytic solution to the VPFP system. Therefore, we introduce our second main theorem, Theorem \ref{thm:vpfp_converge}, which shows that the DNN solutions converge to an analytic solution in a suitable function space when we minimize the total loss function $Loss^{fp}_{Total}$. We assume that our compact domain $V=[-10,10]$ of the $v$-variable is chosen sufficiently large so that we can have
\begin{equation}\label{VPFP_artificial_bdry}
||f||_{L^1_x([-1,1];L^1_v(\mathbb{R}\setminus V))}\le \epsilon\ \text{and }
\left|\partial^{k}_{v}f(t,x,v)-\partial^{k}_{v}f^{nn}(t,x,v)\right|_{v\in \partial V}\leq \epsilon,
\end{equation}for some sufficiently small $\epsilon>0$ and $k=0,1$.
\begin{theorem}\label{thm:vpfp_converge} Assume that $f$ is a solution to \eqref{VPFP} with \eqref{specular} which belongs to $\widehat{C}^{(1,1,2)}([0,T]\times[-1,1]\times V)$. If the solution $f$ and the Deep Neural Network solution $f^{nn}(t,x,v;m,w,b)$ satisfy \eqref{VPFP_artificial_bdry}, then it implies that
\begin{equation}
\left\| f^{nn}(\cdot,\cdot,\cdot;m,w,b) - f \right\|_{L_t^{\infty}([0,T];L_{x,v}^{2}([-1,1]\times V))}\leq C(Loss^{fp}_{Total}(f^{nn})+\epsilon),
\end{equation}
where $C$ is a positive constant depending only on $T$.
\end{theorem}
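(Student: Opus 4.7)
The plan is to run an $L^2$ energy estimate on the difference $F \eqdef f^{nn} - f$, treating the residual terms that the total loss function measures as forcing terms and closing via Grönwall.

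First I would write down the equations satisfied by $f^{nn}$ and by the exact solution $f$ in the form
\begin{equation*}
\partial_t f^{nn} + v\partial_x f^{nn} + E^{nn}\partial_v f^{nn} - \partial_v(vf^{nn}+\partial_v f^{nn}) = R_{\mathrm{GE}}^{(1)},
\end{equation*}
\begin{equation*}
\partial_x E^{nn} - \int_V f^{nn}\,dv = R_{\mathrm{GE}}^{(2)},
\end{equation*}
so that $\|R_{\mathrm{GE}}^{(1)}\|_{L^2_{t,x,v}}^2 = Loss^{fp}_{GE^{(1)}}$ and $\|R_{\mathrm{GE}}^{(2)}\|_{L^2_{t,x}}^2 = Loss^{fp}_{GE^{(2)}}$. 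Subtracting the exact system yields an equation for $F$ with source $R_{\mathrm{GE}}^{(1)}$ plus the nonlinear discrepancy $E^{nn}\partial_v f^{nn} - E\partial_v f = (E^{nn}-E)\partial_v f^{nn} + E\,\partial_v F$. Likewise $G \eqdef E^{nn}-E$ satisfies $\partial_x G = \int_V F\,dv + R_{\mathrm{GE}}^{(2)} + (\text{artificial-boundary correction in } v)$, with $G|_{x=\pm 1}$ bounded by $\sqrt{Loss^{fp}_{BC^{(2)}}}$.

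Next I would multiply the equation for $F$ by $2F$ and integrate over $[-1,1]\times V$. The transport term produces $\int_{v\in V}[vF^2]_{x=-1}^{x=1}\,dv$, which after using the specular reflection symmetry $v\mapsto -v$ is controlled by $Loss^{fp}_{BC^{(1)}}$. Integration by parts on $\partial_v(vF+\partial_v F)$ gives $-2\|\partial_v F\|^2 + \|F\|^2$ plus boundary terms at $v=\pm 10$ absorbed by the hypothesis \eqref{VPFP_artificial_bdry}. The term $\int E\,\partial_v F\cdot 2F = \int E\,\partial_v(F^2)$ vanishes after integrating by parts in $v$ (again up to the small artificial-boundary contribution). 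The only genuinely nonlinear term is $\int G\,\partial_v f^{nn}\cdot 2F\,dxdv$, which by Cauchy-Schwarz is at most $C\|\partial_v f^{nn}\|_{L^\infty_{t,x,v}}\|G\|_{L^\infty_x}\|F\|_{L^2_{x,v}}$ after integrating out $v$.

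The hard part is bounding $\|G(t,\cdot)\|_{L^\infty_x}$ cleanly. I would integrate $\partial_x G = \int_V F\,dv + R_{\mathrm{GE}}^{(2)}$ from $-1$ and use $G(t,-1)$ controlled by the boundary loss, which yields $\|G(t,\cdot)\|_{L^\infty_x}\le C(\|F(t)\|_{L^2_{x,v}} + \sqrt{Loss^{fp}_{GE^{(2)}}} + \sqrt{Loss^{fp}_{BC^{(2)}}})$, invoking Theorem~\ref{VPFP_global}-style regularity of $f$ to absorb $\|\partial_v f^{nn}\|_{L^\infty}$ into a constant depending only on $f$ (since $f^{nn}$ is close to $f$ in $C^{(1,1,2)}$ along the approximating sequence, and a priori bounded uniformly in $T$).

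Combining everything produces the differential inequality
\begin{equation*}
\frac{d}{dt}\|F(t)\|_{L^2_{x,v}}^2 \le C\|F(t)\|_{L^2_{x,v}}^2 + C\bigl(Loss^{fp}_{Total}(f^{nn}) + \epsilon\bigr).
\end{equation*}
The initial datum $\|F(0,\cdot,\cdot)\|_{L^2_{x,v}}^2$ is bounded by $Loss^{fp}_{IC^{(1)}}$. Grönwall's lemma on $[0,T]$ then gives
\begin{equation*}
\sup_{t\in[0,T]}\|F(t)\|_{L^2_{x,v}}^2 \le C(T)\bigl(Loss^{fp}_{Total}(f^{nn}) + \epsilon\bigr),
\end{equation*}
which is the claimed estimate (taking square roots and absorbing $\epsilon$ into the stated bound). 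The main obstacle is the single nonlinear cross-term $\int G\,\partial_v f^{nn}\cdot F$: everything else is standard kinetic energy estimation, but the loss-to-$L^\infty_x$ passage for $G$ is what forces us to include $Loss^{fp}_{GE^{(2)}}$ and $Loss^{fp}_{BC^{(2)}}$ in $Loss^{fp}_{Total}$ and to appeal to a uniform regularity bound on $\partial_v f^{nn}$ inherited from Theorem~\ref{VPFP_global}.
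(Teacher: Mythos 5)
Your overall strategy (an $L^2$ energy estimate on $F=f^{nn}-f$, with the loss terms as forcings, the Poisson equation integrated in $x$ to get an $L^\infty_x$ bound on $G=E^{nn}-E$, and Gr\"onwall) is the same as the paper's; skipping the Carrillo-type change of variables $\bar u(t,x,v)=e^{-t}u(t,x,e^{-t}v)$ that the paper uses is harmless, since the drift term $\partial_v(vF)$ only contributes $\|F\|_{L^2}^2$ plus $v$-boundary terms covered by \eqref{VPFP_artificial_bdry}. However, there is a genuine gap in your treatment of the nonlinear term. You split $E^{nn}\partial_v f^{nn}-E\partial_v f=(E^{nn}-E)\partial_v f^{nn}+E\,\partial_v F$ and then bound the cross-term by $C\|\partial_v f^{nn}\|_{L^\infty}\|G\|_{L^\infty_x}\|F\|_{L^2}$, invoking a uniform bound on $\partial_v f^{nn}$ ``inherited from Theorem \ref{VPFP_global}.'' No such bound is available: Theorem \ref{thm:vpfp_converge} is stated for an arbitrary network output satisfying \eqref{VPFP_artificial_bdry} with possibly small loss — it is precisely the tool used to conclude that a \emph{trained} network is close to $f$, so assuming $f^{nn}$ lies on the $C^{(1,1,2)}$-approximating sequence of Theorem \ref{VPFP_global} is circular, and the constant $C$ in the conclusion must not depend on $f^{nn}$. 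The loss functional contains no derivative bounds on $f^{nn}$ either, so $\|\partial_v f^{nn}\|_{L^\infty}$ can be arbitrarily large even when $Loss^{fp}_{Total}$ is small.

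The paper avoids this by splitting the other way: $E\partial_v f-E^{nn}\partial_v f^{nn}=(E-E^{nn})\partial_v f+E^{nn}\partial_v F$, i.e.\ the $v$-derivative in the cross-term falls on the exact solution $f$, whose $C^1_{x,v}$ norm is a priori bounded by the hypothesis $f\in\widehat{C}^{(1,1,2)}$, while the remaining term is written as $\tfrac12 E^{nn}\partial_v(F^2)$ and integrated by parts in $v$ (since $E^{nn}$ does not depend on $v$), leaving only $v$-boundary contributions controlled by \eqref{VPFP_artificial_bdry} together with $\|E^{nn}\|_{L^1_x}$, which is in turn bounded by $\|E^{nn}-E\|+\|E\|$ using the same Poisson-based estimate you already derived for $G$. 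If you rewire your decomposition this way (or, alternatively, integrate your cross-term by parts in $v$ and absorb $\|\partial_v F\|_{L^2}^2$ into the dissipation — at the cost of superlinear terms in $\|F\|$ that no longer close with a linear Gr\"onwall), the rest of your argument goes through and yields the stated bound.
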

\begin{remark}\label{remark:VPFP_theory_simulation}
Note that we fix the DNN architecture in Figure \ref{fig:DNN_structure_VPFP} before we train the DNN. Namely, we first fix the number of neurons for each layer $m$ before training, and then we update the weights $w$ and biases $b$ to minimize the total loss function. Therefore, if we want to approximate the DNN solution $f^{nn}$ to an analytic solution to the VPFP system, Theorem \ref{thm:vpfp_converge} indicates how much the total loss function $Loss_{Total}(f^{nn}(t,x,v;m,w,b))$ has to be reduced. Then, Theorem \ref{thm:vpfp_loss_0} guarantees the existence of a 3-tuple ($m$, $w$, $b$) where the total loss function is sufficiently reduced as we want. In the DNN simulation, we use Algorithm \ref{algorithm} to find the optimal weights $w$ and biases $b$ to reduce the total loss function while the number of neurons for each layer $m$ is fixed.
\end{remark}
\begin{proof}Motivated by \cite{MR1634851}, we define a transform $\bar{u}(t,x,v)$ of a function $u(t,x,v)$ as follows:
$$\bar{u}(t,x,v) = e^{- t}u(t,x,e^{- t}v).$$
Then the transformed function $\bar{f}$ satisfies
$$\partial_t\bar{f} +e^{- t}(v\cdot\partial_x)\bar{f}+e^{ t}E\partial_v\bar{f} - e^{2 t}\partial_{v}^{2}\bar{f} =0.$$
Also, we define the error values of the functions $\bar{f}^{nn}$ and $E^{nn}$ as the following equations:
$$d_{ge,j}^{(1)}(t,x,v)\eqdef -\left[\partial_{t}+e^{- t}(v\cdot\partial_{x})+e^{ t}E^{nn}\partial_v- e^{2 t}\partial_{v}^{2} \right]\bar{f}^{nn}$$
for $(t,x,v)\in[0,T]\times[-1,1]\times e^{ t}V$,
$$d_{ge,j}^{(2)}(t,x)\eqdef -(\partial_xE^{nn} - \int dv \bar{f}^{nn}(t,x,v)+h(x))$$
for $(t,x)\in[0,T]\times[-1,1]$,
$$d_{bc,j}^{(1)}(t,x,v)\eqdef-(\bar{f}^{nn}(t,x,v)-\bar{f}^{nn}(t,x,-v))$$
for $(t,x,v)\in \gamma^{-}_{T,e^{ t}V}$, and
$$d_{bc,j}^{(2)}(t,x)\eqdef -E^{nn}(t,x)$$
for $(t,x)\in [0,T]\times\partial[-1,1]$. Note that the interval $e^{ t}V$ is defined as
$$ e^{ t}V \eqdef  [-10e^{ t}, 10e^{ t}],$$
and $\gamma^{\pm}_{T,e^{ t}V}$ is defined as $[0,T]\times \gamma^\pm_{e^{ t}V}$, where $\gamma^\pm_{e^{ t}V}$ is equal to $\gamma^\pm$ with the velocity domain $\mathbb{R}$ is replaced by $e^{ t}V$.

We now consider the following equation on the difference between $\bar{f}$ and $\bar{f}^{nn}$ for each fixed $j$ on the compact set of $t,x,v$ only as 
\begin{equation}\label{VPFP_diff_ge}
\left[\partial_{t}+e^{- t}(v\cdot\partial_{x})- e^{2 t}\partial_{v}^{2} \right]\{\bar{f}-\bar{f}^{nn}\}+e^{ t}(E\partial_v\bar{f}-E^{nn}\partial_v\bar{f}^{nn})= d_{ge,j}^{(1)}(t,x,v).
\end{equation}

Then we derive the inequality below by multiplying $2(\bar{f}-\bar{f}^{nn})$ onto \eqref{VPFP_diff_ge} and integrating it over $[-1,1]\times e^{ t}V$ as
\begin{multline}\label{VPFP_energy_eq}
\int_{-1}^{1}\int_{-10e^{ t}}^{10e^{ t}}\frac{\partial}{\partial t}(\bar{f}-\bar{f}^{nn})^{2}(t,x,v)dvdx+ \left(\int_{\gamma_{e^{ t}V}}(\bar{f}-\bar{f}^{nn})^{2}d\gamma\right)\\
- 2 e^{2 t}\langle\partial_{v}^{2}(\bar{f}-\bar{f}^{nn}),(\bar{f}-\bar{f}^{nn})\rangle\\
= -\int_{-1}^{1}\int_{-10e^{ t}}^{10e^{ t}}2e^{ t} (E\partial_v\bar{f}-E^{nn}\partial_v\bar{f}^{nn})(\bar{f}-\bar{f}^{nn})dvdx+2\langle d_{ge,j}^{(1)},(\bar{f}-\bar{f}^{nn})\rangle, 
\end{multline}
where $\langle\cdot,\cdot\rangle$ denotes the standard inner product on ${L^{2}([-1,1]\times e^{ t}V)}$. On the left-hand side of \eqref{VPFP_energy_eq}, we note that
\begin{multline}
\int_{\gamma_{e^{ t}V}}(\bar{f}-\bar{f}^{nn})^{2}d\gamma
= \int_{\gamma^{+}_{e^{ t}V}}(\bar{f}-\bar{f}^{nn})^{2}d\gamma -\int_{\gamma^{-}_{e^{ t}V}}(\bar{f}-\bar{f}^{nn})^{2}d\gamma
\\= \int_{\gamma^{+}_{e^{ t}V}}(\bar{f}-\bar{f}^{nn})^{2}d\gamma -\int_{\gamma^{-}_{e^{ t}V}}(d_{bc,j}^{(1)}(t,x,v) + (\bar{f}-\bar{f}^{nn})(t,x,-v))^{2}d\gamma
\\\geq \int_{\gamma^{+}_{e^{ t}V}}(\bar{f}-\bar{f}^{nn})^{2}d\gamma -2\int_{\gamma^{-}_{e^{ t}V}}|d_{bc,j}^{(1)}(t,x,v)|^2d\gamma - 2\int_{\gamma^{-}_{e^{ t}V}} (\bar{f}-\bar{f}^{nn})^2(t,x,-v)d\gamma
\\= 3\int_{\gamma^{+}_{e^{ t}V}}(\bar{f}-\bar{f}^{nn})^{2}d\gamma -2\int_{\gamma^{-}_{e^{ t}V}}|d_{bc,j}^{(1)}(t,x,v)|^2d\gamma \geq -2\int_{\gamma^{-}_{e^{ t}V}}|d_{bc,j}^{(1)}(t,x,v)|^2d\gamma.
\end{multline}
Also, note that
\begin{multline}\nonumber
\frac{d}{dt}\left\|(\bar{f}-\bar{f}^{nn})(t,\cdot,\cdot)\right\|^{2}_{L^{2}_{x,v}([-1,1]\times e^{ t}V)} = \int_{-1}^{1}\int_{-10e^{ t}}^{10e^{ t}}\frac{\partial}{\partial t}(\bar{f}-\bar{f}^{nn})^{2}(t,x,v)dvdx \\ + \underbrace{10 e^{ t}\left(\left\|(\bar{f}-\bar{f}^{nn})(t,\cdot,10e^{ t})\right\|^{2}_{L^{2}_{x}([-1,1])} + \left\|(\bar{f}-\bar{f}^{nn})(t,\cdot,-10e^{ t})\right\|^{2}_{L^{2}_{x}([-1,1])}\right)}_{\eqdef B_{1}(t)},
\end{multline}
by the Leibniz rule and
\begin{multline}\nonumber
2 e^{2 t}\langle\partial_{v}^{2}(\bar{f}-\bar{f}^{nn}),(\bar{f}-\bar{f}^{nn})\rangle = -2 e^{2 t} \left\|\partial_{v}(\bar{f}-\bar{f}^{nn}) \right\|^{2}_{L^{2}_{x,v}([-1,1]\times e^{ t}V)}
\\+ \underbrace{2 e^{2 t}\int_{-1}^{1}\partial_{v}(\bar{f}-\bar{f}^{nn})(\bar{f}-\bar{f}^{nn})(t,\cdot,10e^{ t})-\partial_{v}(\bar{f}-\bar{f}^{nn})(\bar{f}-\bar{f}^{nn})(t,\cdot,-10e^{ t})dx}_{\eqdef B_2(t)}.
\end{multline}
So, it yields that
$$2 e^{2 t}\langle\partial_{v}^{2}(\bar{f}-\bar{f}^{nn}),(\bar{f}-\bar{f}^{nn})\rangle \leq B_2(t).$$
Also, note that $$(E\partial_v\bar{f}-E^{nn}\partial_v\bar{f}^{nn})(\bar{f}-\bar{f}^{nn})
=(E-E^{nn})\partial_v\bar{f}(\bar{f}-\bar{f}^{nn})+\frac{1}{2}E^{nn}\partial_v(\bar{f}-\bar{f}^{nn})^2,$$
and Poisson equation implies the difference between $E$ and $E^{nn}$ as 
\begin{equation}
	\partial_x(E-E^{nn}) =\int_{e^{ t}V} (\bar{f}-\bar{f}^{nn}) dv + \int_{\R\setminus e^{ t}V} \bar{f} dv + d_{ge,j}^{(2)}(t,x).
\end{equation}
So, it yields that
\begin{multline*}|E-E^{nn}|(t,x)\\
\le \int_{-1}^x dx' \int_{e^{ t}V}dv |\bar{f}-\bar{f}^{nn}|(t,x',v)+\int_{-1}^x dx' \left(\int_{\mathbb{R}\setminus e^{ t}V} |\bar{f}(t,x',v)|dv \right)\\
+\int_{-1}^x dx' \ |d_{ge,j}^{(2)}(t,x')|+|\underbrace{(E-E^{nn})(t,x=-1)}_{=|E^{nn}(t,x=-1)|}|\\
\le \int_{-1}^x dx' \int_{e^{ t}V}dv |\bar{f}-\bar{f}^{nn}|(t,x',v)+\int_{-1}^x dx' \left(\int_{\mathbb{R}\setminus V} dv|f(t,x',v)| \right)
\\+\sqrt{2}\| d_{ge,j}^{(2)}(t,x') \|_{L^2_{x}([-1,1])}+\underbrace{\|d_{bc,j}^{(2)}(t,x)\|_{L^1_{x}(\partial[-1,1])}}_{\leq \sqrt{2}\|d_{bc,j}^{(2)}(t,x)\|_{L^2_{x}(\partial[-1,1])}}
,\end{multline*} by H\"older's inequality. Thus, we have
\begin{multline}\label{mathcalL}\|E(t)-E^{nn}(t)\|_{L^\infty_x}
\\\le \|\bar{f}-\bar{f}^{nn}\|_{L^1_{x,v}([-1,1]\times e^{ t}V)} + \epsilon+\sqrt{2}\| d_{ge,j}^{(2)}\|_{L^2_{x}}+\sqrt{2}\|d_{bc,j}^{(2)}\|_{L^2_{x}},\end{multline} by \eqref{VPFP_artificial_bdry}.
Then the integration by parts in $v$ variable yields that
\begin{multline}
\left|\int_{-1}^{1}\int_{-10e^{ t}}^{10e^{ t}}2e^{ t} (E\partial_v\bar{f}-E^{nn}\partial_v\bar{f}^{nn})(\bar{f}-\bar{f}^{nn})dvdx\right|\\
\le\int_{-1}^{1}\int_{-10e^{ t}}^{10e^{ t}}2e^{ t} |E-E^{nn}||\partial_v\bar{f}||\bar{f}-\bar{f}^{nn}|dvdx\\+\frac{1}{2}\left|\int_{-1}^{1}\int_{-10e^{ t}}^{10e^{ t}}2e^{ t}E^{nn}\partial_v(\bar{f}-\bar{f}^{nn})^2dvdx\right|\\
\le 2e^{ t} \bigg( \|\bar{f}-\bar{f}^{nn}\|_{L^1_{x,v}([-1,1]\times e^{ t}V)} + \epsilon+\sqrt{2}\| d_{ge,j}^{(2)}\|_{L^2_{x}}+\sqrt{2}\|d_{bc,j}^{(2)}\|_{L^2_{x}}\bigg)
\\\times \|\bar{f}\|_{C^1_{x,v}([-1,1]\times e^{ t}V)}\|\bar{f}-\bar{f}^{nn}\|_{L^1_{x,v}([-1,1]\times e^{ t}V)}
\\+\frac{1}{2}\left|\int_{-1}^{1}2e^{ t}E^{nn}\left((\bar{f}-\bar{f}^{nn})^2(t,x,10e^{ t})-(\bar{f}-\bar{f}^{nn})^2(t,x,-10e^{ t})\right)dx\right|\\
\le 2e^{ t} \bigg( \|\bar{f}-\bar{f}^{nn}\|_{L^1_{x,v}([-1,1]\times e^{ t}V)} + \epsilon+\sqrt{2}\| d_{ge,j}^{(2)}\|_{L^2_{x}}+\sqrt{2}\|d_{bc,j}^{(2)}\|_{L^2_{x}}\bigg)
\\\times \|\bar{f}\|_{C^1_{x,v}([-1,1]\times e^{ t}V)}\|\|\bar{f}-\bar{f}^{nn}\|_{L^1_{x,v}([-1,1]\times e^{ t}V)}\\+e^{ t}\|E^{nn}\|_{L^1_x} \left(\|(\bar{f}-\bar{f}^{nn})(t,\cdot,10e^{ t})\|^2_{L^\infty_x}+\|(\bar{f}-\bar{f}^{nn})(t,\cdot,-10e^{ t})\|^2_{L^\infty_x}\right)
\end{multline}Here, by H\"older's inequality, we have
$$\|\bar{f}-\bar{f}^{nn}\|_{L^1_{x,v}([-1,1]\times e^{ t}V)}\le\sqrt{40e^{ t}}\|\bar{f}-\bar{f}^{nn}\|_{L^2_{x,v}} .$$ Also, $f\in \hat{C}^{(1,1,2)}$ implies that
$$ \|\bar{f}\|_{C^1_{x,v}([-1,1]\times e^{ t}V)}\|=\|e^{- t}f(t,x,e^{- t}v)\|_{C^1_{x,v}([-1,1]\times e^{ t}V)} \leq e^{- t}(C_0+e^{- t}C_0),$$
for some positive constant $C_0$. Thus, by \eqref{VPFP_artificial_bdry}, we have
\begin{multline}\label{eq18}
\left|\int_{-1}^{1}\int_{-10e^{ t}}^{10e^{ t}}2e^{ t} (E\partial_v\bar{f}-E^{nn}\partial_v\bar{f}^{nn})(\bar{f}-\bar{f}^{nn})dvdx\right|\\
\le  2\left(\sqrt{40e^{ t}}\|\bar{f}-\bar{f}^{nn}\|_{L^2_{x,v}} +\epsilon+\sqrt{2}\| d_{ge,j}^{(2)}\|_{L^2_{x}}+\sqrt{2}\|d_{bc,j}^{(2)}\|_{L^2_{x}}\right)
\\\times (C_0+e^{- t}C_0)\sqrt{40e^{ t}}\|\bar{f}-\bar{f}^{nn}\|_{L^2} + 2\|E^{nn}\|_{L^1_x}e^{  t}\epsilon.
\end{multline}
Also, by \eqref{mathcalL}, we have
\begin{multline*}
\|E^{nn}\|_{L^1_x}=\|E^{nn}-E+E\|_{L^1_x}
\le \|E-E^{nn}\|_{L^1_x}+\|E\|_{L^1_x}\\
\le 2(\|f-f_j\|_{L^1_{x,v}([-1,1]\times V)} + \epsilon+\sqrt{2}\| d_{ge,j}^{(2)}\|_{L^2_{x}}+\sqrt{2}\|d_{bc,j}^{(2)}\|_{L^2_{x}})+\|E\|_{L^1_x}\\
\le  2\left(\sqrt{40e^{ t}}\|\bar{f}-\bar{f}^{nn}\|_{L^2_{x,v}} + \epsilon+\sqrt{2}\| d_{ge,j}^{(2)}\|_{L^2_{x}}+\sqrt{2}\|d_{bc,j}^{(2)}\|_{L^2_{x}}\right)+\|E\|_{L^1_x},
\end{multline*} by \eqref{VPFP_artificial_bdry}. Therefore, \eqref{eq18} yields that if $\epsilon<1$, then
\begin{multline}\label{eq19}
\left|\int_{-1}^{1}\int_{-10e^{ t}}^{10e^{ t}}2e^{ t} (E\partial_v\bar{f}-E^{nn}\partial_v\bar{f}^{nn})(\bar{f}-\bar{f}^{nn})dvdx\right|
\\\le 2(C_0+e^{- t}C_0)\sqrt{40e^{ t}}\|\bar{f}-\bar{f}^{nn}\|_{L^2_{x,v}}
\\\times\bigg(\sqrt{40e^{ t}}\|\bar{f}-\bar{f}^{nn}\|_{L^2_{x,v}} +\epsilon+\sqrt{2}\| d_{ge,j}^{(2)}\|_{L^2_{x}}+\sqrt{2}\|d_{bc,j}^{(2)}\|_{L^2_{x}}\bigg)
\\+2e^{  t}\epsilon\left(\left(\sqrt{40e^{ t}}\|\bar{f}-\bar{f}^{nn}\|_{L^2_{x,v}}+ \epsilon+\mathcal{L}(t)\right)+\|E\|_{L^1_x} \right)\\
\le C_1\|\bar{f}-\bar{f}^{nn}\|^2_{L^2_{x,v}}+C_2\epsilon+C_3\| d_{ge,j}^{(2)}\|_{L^2_{x}}^2+C_4\|d_{bc,j}^{(2)}\|_{L^2_{x}}^2,
\end{multline}for some positive constant $C_1$, $C_2$, $C_3$, and $C_4$ by the use of Young's inequality. Note that $\|E\|_{L^1_x}$ is also bounded as we have $f\in \hat{C}^{(1,1,2)}$ on the compact domain and we also have $\|f\|_{L^1_x([-1,1];L^1_v (\mathbb{R}\setminus V))}\le \epsilon$ by \eqref{VPFP_artificial_bdry}.
Then, we can reduce \eqref{VPFP_energy_eq} to
\begin{multline}\label{VPFP_energy_ineq}
\frac{d}{dt}\overbrace{\|\bar{f}-\bar{f}^{nn}\|^{2}_{L^{2}([-1,1]\times e^{ t}V)}}^{Y(t)\eqdef }
\\ \leq (C_1+1)\left\|\bar{f}-\bar{f}^{nn}\right\|^2_{L^2_{x,v}([-1,1]\times e^{ t}V)} + B_{1}(t) + B_{2}(t) + C_4\epsilon
\\+\|d_{ge,j}^{(1)}(t,x,v)\|_{L^{2}_{x,v}([-1,1]\times e^{ t}V)}^2+ C_3\| d_{ge,j}^{(2)}(t,x)\|_{L^2_{x}}^2
\\+2\int_{\gamma^{-}_{T,e^{ t}V}}|d_{bc,j}^{(1)}(t,x,v)|^2d\gamma+C_4\|d_{bc,j}^{(2)}(t,x)\|_{L^2_{x}}^2.
\end{multline}
If we use $C_1+1\leq C_5$ with some positive constant $C_5$, we can rewrite \eqref{VPFP_energy_ineq} as follows:
\begin{multline}\label{VPFP_energy_ineq2}
Y'(t) -C_5Y(t) \leq B_{1}(t) + B_{2}(t)+C_4\epsilon
\\+C_6\underbrace{\left(\|d_{ge,j}^{(1)}\|_{L^{2}_{x,v}}^2 + \| d_{ge,j}^{(2)}\|_{L^2_{x}}^2 + \int_{\gamma^{-}_{T,e^{ t}V}}|d_{bc,j}^{(1)}|^2d\gamma + \|d_{bc,j}^{(2)}\|_{L^2_{x}}^2\right)}_{\eqdef L(t)},
\end{multline}
with some positive constant $C_6$. By Gr\"onwall's inequality, we have
\begin{multline}\label{VPFP_conclusion}
Y(t)\leq e^{C_5t}
\\\left(Y(0)+\int_{0}^{t} e^{-C_5s}C_6L(s)ds+\int_{0}^{t} e^{-C_5s}\left(B_1(s)+B_2(s)+C_4\epsilon\right) ds\right).
\end{multline}
Finally, we recall that $Y(t)=e^{- t}\|f-f_j\|^{2}_{L^{2}_{x,v}([-1,1]\times V)}$, $Y(0)=Loss^{fp}_{IC}$, and
\begin{multline}\nonumber
Y(0)+\int_{0}^{t}e^{-C_5s}L(s)ds \leq Y(0)+\int_{0}^{t}L(s)ds \leq Y(0)+\int_{0}^{T}L(s)ds
\\\leq C_7(Loss^{fp}_{IC} + Loss^{fp}_{ge} + Loss^{fp}_{BC}) = C_7Loss^{fp}_{Total}(f_{j}).
\end{multline}
for some positive constant $C_7$. Moreover, under the assumption on \eqref{VPFP_artificial_bdry}, we have
$$\int_{0}^{t}e^{-C_5s}B_{1}(s)ds\leq 40\epsilon^{2}\int_{0}^{t}e^{-C_5s}e^{- s}ds\leq C_{8} \epsilon^{2},$$
$$
\int_{0}^{t}e^{-C_5s}B_{2}(s)ds\leq 8\epsilon^{2} \int_{0}^{t}e^{-C_5s}e^{- s}ds \leq C_{9}\epsilon^{2},$$
for some positive constant $C_8$ and $C_9$. Therefore, \eqref{VPFP_conclusion} and the inverse transform from $\bar{f}$ to $f$ imply that
$$\left\| f-f^{nn}\right\|_{L_t^{\infty}([0,T];L_{x,v}^{2}([-1,1]\times V))}\leq C(Loss_{Total}(f^{nn}) + \epsilon),$$
for some positive constant $C$ which depends only on $T$. This completes the proof of Theorem \ref{thm:vpfp_converge}. 
\end{proof}

\subsection{Neural Network simulations}\label{subsec:part2_simulation}
In this section, we introduce numerical simulations for the solutions $f^{nn}(t,x,v;m,w,b)$ and $E^{nn}(t,x;m,w,b)$ to the VPFP system \eqref{VPFP}. We consider the following initial condition:
\begin{equation}\label{VPFP_initial}
f(0,x,v)=f_0(x,v) = \begin{cases}
e^{x-1}\left( 1-\cos(\frac{\pi}{2}v) \right), &\text{ if $v\in  (-4,4),$}\\
0, &\text{ otherwise},
\end{cases}
\end{equation}
which has different initial ditributions at each position $x\in[-1,1]$. We consider the time interval $[0,5]$ which is enough to reach the steady state of the solution to the VPFP system. Also, we set the background charge $h(x)$ as constant that safisfies the global neutrality condtion \eqref{VPFP_neutrality}. More details about our Deep Learning algorithm are explained in Section \ref{subsec:dnn_architerture}, Section \ref{subsec:grid}, and the summary of the Deep Learning Algorithm \ref{algorithm}.

The first plot in Figure \ref{fig:vpfp_1} shows the time-asymptotic behaviors of the $L^\infty$ norm of the distribution $f^{nn}(t,x,v;m,w,b)$ with respect to position $x$ and velocity $v$. After 3 time grids, the value converges to almost constant. This indicates that the distributions $f^{nn}(t,x,v;m,w,b)$ converge to the steady state. It can be observed more clearly in the third plot in Figure \ref{fig:vpfp_1}, which shows the difference between the distribution $f^{nn}(t,x,v;m,w,b)$ and the global equilibrium \eqref{equilibrium_VPFP}. The $L^1$, $L^2$ and $L^\infty$ norm of the difference with respect to position $x$ and velocity $v$ tend to zero as time increases. This is consistent to our theoretical supports provided in the equation \eqref{equilibrium_VPFP}. Later, the pointwise values of $f^{nn}(t,x,v;m,w,b)$ show the shape of the convergence to the global Maxwellian in \ref{fig:vpfp_4}.

The second plot in Figure \ref{fig:vpfp_1} shows the value of $\text{Mass}(t)$ over time defined in \eqref{mass conservation}. The plot shows that the total mass of the system is conserved. It shows an agreement with the theoretical result that the VPFP system with the specular boundary condition \eqref{specular} yields the conservation of the total mass \eqref{mass conservation}, which is an important a \text{priori} physical law for the VPFP system.

\begin{figure}[H]
  \includegraphics[width=\textwidth, draft=false]{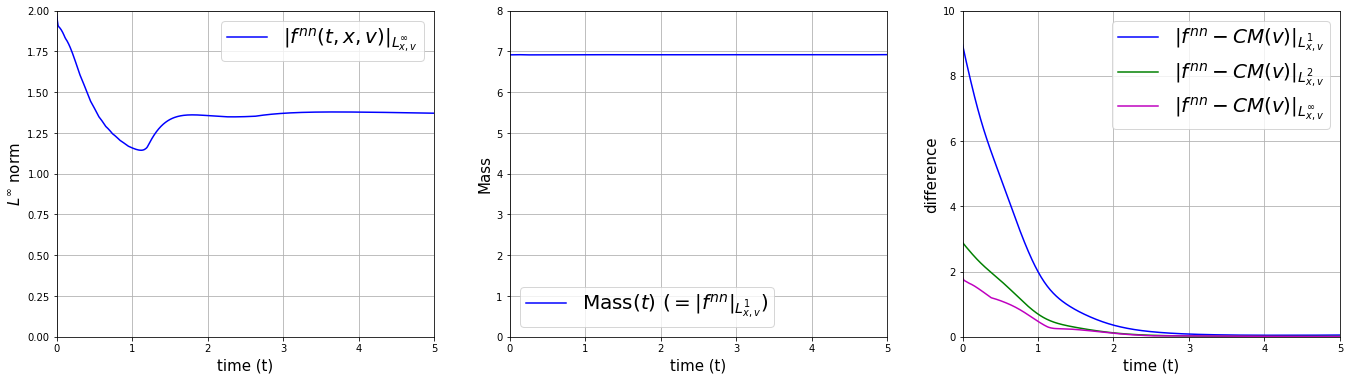}
  \caption{The time-asymptotic behaviors of the $L^\infty$ norm, $L^1$ norm of $f^{nn}(t,x,v;m,w,b)$ (the first and the second plot) and the $L^1$ norm, $L^2$ norm, and $L^\infty$ norm of the difference between $f^{nn}(t,x,v;m,w,b)$ and the global Maxwellian $\frac{\|f_0(\cdot,\cdot)\|_{L^1_{x,v}}}{|\Omega|}M(v)$. It is notable that the total mass $Mass(t)$ of the distribution $f^{nn}(t,x,v;m,w,b)$ is conserved over time in the second plot. Also, note that the third plot shows that the distribtuion $f^{nn}(t,x,v;m,w,b)$ converges to the global Maxwellian.}
  \label{fig:vpfp_1}
\end{figure}

\begin{figure}[H]
  \includegraphics[width=\textwidth, draft=false]{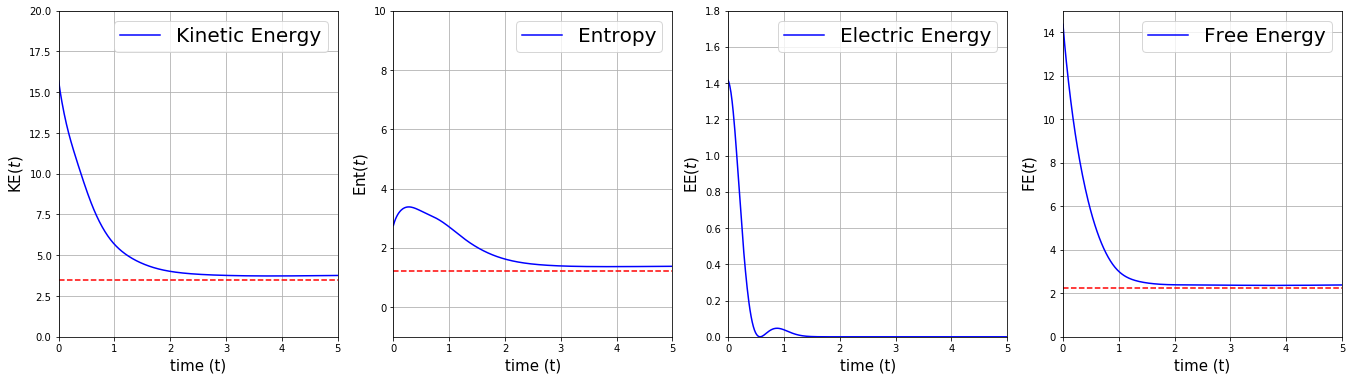}
  \caption{The time-asymptotic behaviors of the macroscopic quantities of $f^{nn}(t,x,v;m,w,b)$ and $E^{nn}(t,x;m,w,b)$. The steady-state values of the kinetic energy \eqref{KE_inf}, the entropy \eqref{Ent_inf}, the free energy \eqref{FE_inf} are indicated in the red-dotted lines. Note that the free energy is monotonically decreasing.}
  \label{fig:vpfp_2}
\end{figure}

Figure \ref{fig:vpfp_2} shows the time-asymptotic behaviors of four macroscopic quantities of $f^{nn}(t,x,v;m,w,b)$; the total kinetic energy ``\text{KE}" \eqref{KE}, the entropy ``\text{Ent}" \eqref{Ent}, the electric pontential energy ``\text{EE}" \eqref{EE} and the free energy ``\text{FE}" \eqref{FE}. The steady state values of these four macroscopic quantities can obtained from the macroscopic quntities of the equilibrium in \eqref{equilibrium_VPFP}. Therefore, we expect the steady state values of the four macroscopic qunatities as follows:
\begin{align}
  \text{KE}_\infty&= \frac{\|f_0(\cdot,\cdot)\|_{L^1_{x,v}}}{|\Omega|}, \label{KE_inf}\\
  \text {Ent}_\infty&= -\|f_0(\cdot,\cdot)\|_{L^1_{x,v}}\log\left(\frac{\|f_0(\cdot,\cdot)\|_{L^1_{x,v}}}{|\Omega|(2\pi)^{0.5}}\right)+\frac{1}{2}\|f_0(\cdot,\cdot)\|_{L^1_{x,v}}, \label{Ent_inf}\\
  \text {EE}_\infty&=0, \label{EE_inf}\\
  \text{FE}_{\infty}&= \text{KE}_{\infty} -\text{Ent}_{\infty} + \text{EE}_{\infty}, \label{FE_inf}
\end{align}
where $|\Omega|=2$ and $\|f_0(\cdot,\cdot)\|_{L^1_{x,v}}\approx6.917$ in our case.  We denote the steady state values via the red-dotted lines in Figure \ref{fig:vpfp_2}. The four plots show that the each physical quantitity converges to each steady state. Also, the fourth plot in Figure \ref{fig:vpfp_2} shows a non-increasing trend of the free energy. This is also consistent to our theoretical supports of \eqref{FE_dissipation}.

\begin{figure}[H]
  \includegraphics[width=0.9\textwidth, draft=false]{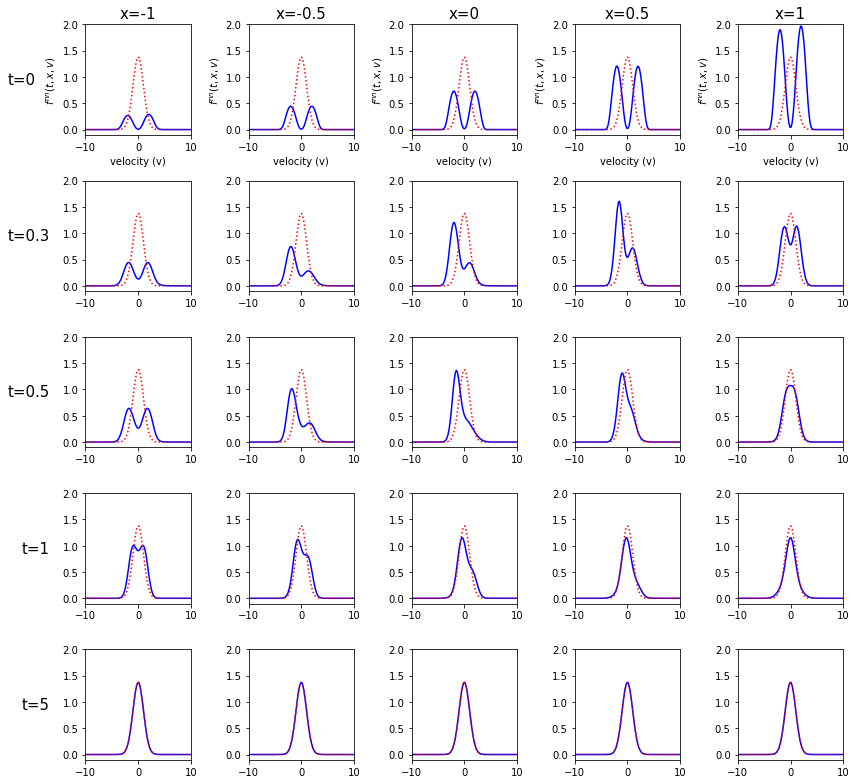}
  \caption{The pointwise values of $f^{nn}(t,x,v;m,w,b)$ as time $t$ varies at each position $x$'s. $x=-1,-0.5,0,0.5,1$ are the points to explain the convergence to the global Maxwellian $\frac{\|f_0(\cdot,\cdot)\|_{L^1_{x,v}}}{|\Omega|}M(v)$. The steady-state (global Maxwellian) is given via the red-dotted lines.}
  \label{fig:vpfp_3}
\end{figure}

Figure \ref{fig:vpfp_3} shows the pointwise values of the approximated neural network solution $f_\varepsilon^{nn}(t,x,v;m,w,b)$ as time $t$ varies at each position $x=-1,-0.5,0,0.5$ and $x=1$. Also, Figure \ref{fig:vpfp_4} shows the pointwise values of the neural network solution $E_\varepsilon^{nn}(t,x;m,w,b))$ as time $t$ varies at some positions $x=-1,-0.5,0,0.5$ and $x=1$ in different colors as shown in the legend. The two plots show that the $f^{nn}(t,x,v;m,w,b)$ and $E^{nn}(t,x;m,w,b)$ converge pointwisely to each equilibrium 
$$f_{\varepsilon,\infty}(x,v)=\frac{\|f_0(\cdot,\cdot)\|_{L^1_{x,v}}}{|\Omega|}M(v)\text{ and }E_{\varepsilon,\infty}(x)=0,$$
which is precisely explained in \eqref{equilibrium_VPFP}. We expect that the steady-state of the distribution $f^{nn}(t,x,v;m,w,b)$ to the VPFP system has the same global Maxwellian at each position $x\in[-1,1]$ although the initial condition \eqref{VPFP_initial} has the different ditributions at each position. To confirm this, we denote the global Maxwellian function $f_{\varepsilon,\infty}(x,v)$ via the red-dotted lines in Figure \ref{fig:vpfp_3}. As we expect, Figure \ref{fig:vpfp_3} shows that the distribution functions $f^{nn}(t,x,v;m,w,b)$ converge to the same Maxwellian shape at time $t=5$. The relative $L^2_{x,v}$ error between the global Maxwellian $f_{\varepsilon,\infty}(x,v)$ and the equilibrium of the neural network solution at $t=5$ is $4.7\times{10}^{-3}$. Also, the pointwise values of $E^{nn}(t,x;m,w,b)$ for all positions $x\in[-1,1]$ converge to zero as shown in Figure \ref{fig:vpfp_4}. This result also shows an agreement with the theoretical steady-state \eqref{equilibrium_VPFP}.

\begin{figure}[H]
  \includegraphics[width=0.8\textwidth, draft=false]{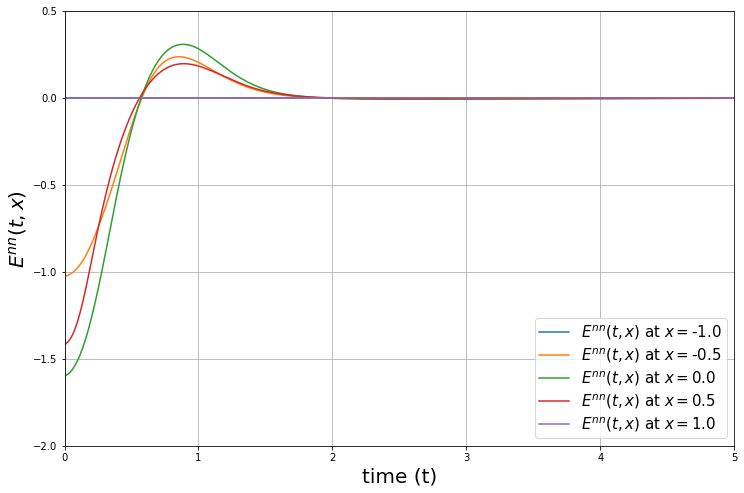}
  \caption{The pointwise values of $E^{nn}(t,x;m,w,b)$ at each position $x$'s over time $t$. The values at each position $x=-1,-0.5,0,0.5,1$ are drawn in different colors as shown in the legend.}
  \label{fig:vpfp_4}
\end{figure}

\section{Part III. On convergence of DNN solutions to an analytic solution to the PNP system and simulation results}\label{sec:part3}
In this section, we provide a DNN solution to the PNP system \eqref{PNP}. This section also consists of three subsections, similarly to Part II (Section \ref{sec:part2}). First, we propose the loss functions for the PNP system. Second, we prove the convergence of a DNN solution to an analytic solution to the PNP system in two steps. Finally, we show the simulation results of the DNN solutions to the PNP system by comparing the time-asymptotic behaviors, the macroscopic quantities, and the steady-state of the PNP system which is defined in Section \ref{subsubsec:VPFP_equilibrium_quantities}.
\subsection{Loss functions for the PNP system}
\label{subsec:part3_loss}
We need to define loss functions for the PNP system: $Loss^{pnp}_{GE}$ for the PNP system \eqref{PNP}$_1$ and \eqref{PNP}$_3$, $Loss^{pnp}_{IC}$ for the initial condition \eqref{PNP}$_2$ and \eqref{PNP}$_4$ and $Loss^{pnp}_{BC}$ for the boundary condition \eqref{no-flux} and \eqref{PNP}$_5$. Note that we use the superscript $Loss^{pnp}$ for all loss functions to the PNP system to distinguish it from the superscript $Loss^{vpfp}$ used for the loss functions to the VPFP system in Section \ref{subsec:part2_loss}.

First, we define loss functions for the governing equation as
\begin{multline}\label{pnp_loss_ge1}
  Loss^{pnp}_{GE^{(1)}}(\rho^{nn})
  \\\eqdef \int_{(0,T)}dt\int_{(-1,1)}dx |\partial_t \rho^{nn}(t,x;m,w,b) - \partial_{xx} \rho^{nn}(t,x;m,w,b)\\
  + \partial_x (\rho^{nn}(t,x;m,w,b)E^{nn}(t,x;m,w,b))|^2,
\end{multline}
and
\begin{multline}\label{pnp_loss_ge2}
	Loss^{pnp}_{GE^{(2)}}(\rho^{nn})
	\\\eqdef \int_{(0,T)}dt\int_{(-1,1)}dx \left|\partial_x E^{nn}(t,x;m,w,b)-\rho^{nn}(t,x;m,w,b)+1\right|^2
	\\+ \int_{(0,T)}dt\int_{(-1,1)}dx \left|\partial_t\left(\partial_x E^{nn}(t,x;m,w,b)-\rho^{nn}(t,x;m,w,b)+1\right)\right|^2
	\\+ \int_{(0,T)}dt\int_{(-1,1)}dx \left|\partial_x\left(\partial_x E^{nn}(t,x;m,w,b)-\rho^{nn}(t,x;m,w,b)+1\right)\right|^2.
\end{multline}
Note that this loss function $Loss^{pnp}_{GE^{(2)}}(\rho^{nn})$ is not just the $L^2$ error with respect to $t$ and $x$. We add the $t$-derivative and the $x$-derivative of the error to the original $L^2$ error as shown in the definition \eqref{pnp_loss_ge2}. We need these two terms to prove the convergence of the neural network solution to the analytic solution in Theorem \ref{thm:pnp_converge} in the following section. Then we define $Loss^{pnp}_{GE}$ as
$$
Loss^{pnp}_{GE}(\rho^{nn}) \eqdef Loss^{pnp}_{GE^{(1)}} + Loss^{pnp}_{GE^{(2)}}.
$$
We now define the loss function for the initial condition
\begin{equation}\label{pnp_loss_ic1}
Loss^{pnp}_{IC^{(1)}}(\rho^{nn}) \eqdef \int_{(-1,1)}dx\left|\rho^{nn}(0,x;m,w,b)-\rho_0(x)\right|^2	
\end{equation}
and
\begin{multline}\label{pnp_loss_ic2}
Loss^{pnp}_{IC^{(2)}}(\rho^{nn})
\\\eqdef \int_{(-1,1)}dx\left|E^{nn}(0,x;m,w,b)-\left(\int_{-1}^x\rho_0(y)dy-(x+1)\right)\right|^2.	
\end{multline}
Then, we define $Loss^{pnp}_{IC}$ as 
$$
Loss^{pnp}_{IC}(\rho^{nn}) \eqdef Loss^{pnp}_{IC^{(1)}} + Loss^{pnp}_{IC^{(2)}}
$$
The loss function for the \textit{Neumann} boundary condition for $\rho(t,x)$ is defined as follows:
\begin{equation}\label{pnp_loss_bc1}
Loss^{pnp}_{BC^{(1)}}(\rho^{nn}) \eqdef \int_{(0,T)}dt\int_{x\in\partial[-1,1]}dx\left|\partial_x\rho^{nn}(t,x;m,w,b)\right|^2.
\end{equation}
We defined the loss function for the \textit{Dirichlet} boundary condition for $E(t,x)$
\begin{multline}\label{pnp_loss_bc2}
	Loss^{pnp}_{BC^{(2)}}(\rho^{nn}) \eqdef \int_{(0,T)}dt\int_{x\in\partial[-1,1]}dx\left(|E^{nn}(t,x;m,w,b)|^2\right)
	\\+ \int_{(0,T)}dt\int_{x\in\partial[-1,1]}dx\left(|\partial_tE^{nn}(t,x;m,w,b)|^2\right).
\end{multline}
Note that we add the error of $\partial_tE^{nn}(t,x;m,w,b)$ to the original $L^2$ error as shown in the definition \eqref{pnp_loss_bc2}. This is also for the proof in Theorem \ref{thm:pnp_converge} in the following section. Then, we define the total loss for the boundary conditions as
$$
Loss^{pnp}_{BC}(\rho^{nn}) \eqdef Loss^{pnp}_{BC^{(1)}} + Loss^{pnp}_{BC^{(2)}}.
$$
Finally, we define the total loss as
\begin{equation}\label{pnp_loss_total}
  Loss^{pnp}_{Total}(\rho^{nn}) \eqdef Loss^{pnp}_{GE} + Loss^{pnp}_{IC} + Loss^{pnp}_{BC}.
\end{equation}
Note that we compute these loss functions via approximating the integration by the Riemann sum on the grid points similarly to Section \ref{subsec:part2_loss}.

\subsection{On convergence of DNN solutions to an analytic solution to the PNP system}\label{subsec:part3_theory}
This section shows the convergence of the DNN solutions to an analytic solution to the PNP system \eqref{PNP} in two steps, similarly to Section \ref{subsec:part3_theory}. First, we prove that there exists a sequence of neural network parameters such that the total loss function $Loss^{pnp}_{Total}$ converges to 0. We then show that the corresponding sequence of DNN solutions converges to an analytic solution if we minimize the total loss function $Loss^{pnp}_{Total}$. Throughout the section, we assume that the existence and the uniqueness of solutions for the PNP system \eqref{PNP} with the no-flux boundary condition \eqref{no-flux} are a priori given.

We introduce our first main theorem similarly to that of Theorem \ref{thm:vpfp_loss_0} which shows the existence of a sequence of neural network parameters that makes the total loss function converge to zero if the $\widehat{C}^{(1,2)}([0,T]\times[-1,1])$ solution to the PNP system exists:
\begin{theorem}[Theorem 3.4 of \cite{MR4116803}]\label{thm:pnp_loss_0}Assume that the solution $\rho$ to \eqref{PNP} with \eqref{no-flux} which belongs to $\widehat{C}^{(1,2)}([0,T]\times[-1,1])$, and the activation function $\bar{\sigma}(x)\in C^{(1,2)}([0,T]\times [-1,1])$ is non-polynomial. Then, there exists $\{m_{[j]}, w_{[j]}, b_{[j]}\}_{j=1}^\infty$ such that a sequence of the DNN solutions $\rho^{nn}$ of Theorem \ref{VPFP_global} with $m_{[j]}$ nodes, denoted by $$\{\rho_j(t,x,v) = \rho^{nn}(t,x,v;m_{[j]}, w_{[j]}, b_{[j]})\}_{j=1}^{\infty}$$ satisfies\footnote{Each of $m_{[j]}, w_{[j]}, b_{[j]}$ represents the matrix of the numbers corresponding to $\rho_j$ for each $j=1,2,...,\infty$. The matrices $m_{[j]}, w_{[j]}, b_{[j]}$ consist of the element represented as $m_{[j],ik}^{(l)}, w_{[j],ik}^{(l)}, b_{[j],ik}^{(l)}$, respectively.}
\begin{equation}\label{pnp_loss_0}
Loss^{pnp}_{Total}(\rho_j)\rightarrow 0\text{ as }j\rightarrow\infty.
\end{equation}
\end{theorem}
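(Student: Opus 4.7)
The plan is to apply Li's simultaneous approximation theorem (Theorem \ref{VPFP_global}) to build two sequences of neural networks, one for $\rho$ and one for $E$, that approximate the true solution together with the derivatives appearing in the loss functions uniformly on the compact set $[0,T]\times[-1,1]$. First I would observe that since $\rho\in\widehat{C}^{(1,2)}$ is a priori given, the Poisson equation $\partial_x E=\rho-h$ together with the boundary values $E(t,\pm 1)=0$ determines $E$ explicitly as the primitive of $\rho-h$, so $E$ belongs to a class at least as regular as $\widehat{C}^{(1,3)}$ provided $h$ is sufficiently regular; in particular all of $\partial_t E$, $\partial_x E$, $\partial_{tx}E$, $\partial_{xx}E$ that appear in the loss functions are continuous on $[0,T]\times[-1,1]$.

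Given $j\in\mathbb{N}$, Theorem \ref{VPFP_global} applied separately to $\rho$ and to $E$ produces networks $\rho_j$ and $E_j$ such that $\|D^{k}(\rho-\rho_j)\|_{L^\infty}<1/j$ and $\|D^{k}(E-E_j)\|_{L^\infty}<1/j$ for every multi-index $k$ up to the order appearing in the loss functions. I would then process each of the six loss terms in turn. For $Loss^{pnp}_{GE^{(1)}}$ the integrand equals
$$\bigl|\partial_t(\rho_j-\rho)-\partial_{xx}(\rho_j-\rho)+\partial_x(\rho_j E_j-\rho E)\bigr|^2,$$
because the true pair $(\rho,E)$ satisfies the drift-diffusion equation pointwise; expanding $\rho_j E_j-\rho E=(\rho_j-\rho)E_j+\rho(E_j-E)$ and differentiating by the product rule shows the expression is $O(1/j)$ uniformly, hence integrating over the bounded domain gives $O(1/j^2)$. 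The Poisson loss $Loss^{pnp}_{GE^{(2)}}$ is handled analogously: each of its three integrands, the residual $\partial_x E_j-\rho_j+1$ and its $t$- and $x$-derivatives, is pointwise controlled by $1/j$ because the true $(\rho,E)$ satisfies $\partial_x E=\rho-h$ with $h\equiv 1$, and this identity can be differentiated in $t$ and $x$. The initial-condition losses $Loss^{pnp}_{IC^{(1,2)}}$ and the boundary losses $Loss^{pnp}_{BC^{(1,2)}}$ follow from the same uniform approximation applied at $t=0$ or $x=\pm 1$; the only new inputs are $\rho(0,x)=\rho_0(x)$, the primitive formula \eqref{PNP_E_ini} for $E(0,x)$, the relation $\partial_x \rho(t,\pm 1)=0$ coming from the no-flux condition together with $E(t,\pm 1)=0$, and finally $\partial_t E(t,\pm 1)=0$ obtained by differentiating the Dirichlet condition in $t$.

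Summing the six $O(1/j^2)$ contributions yields $Loss^{pnp}_{Total}(\rho_j)\to 0$ as $j\to\infty$, which is the desired conclusion. The main obstacle I anticipate is not the estimates themselves but the bookkeeping: the loss functions involve two separate networks $\rho^{nn}$ and $E^{nn}$ with independent parameters, while the theorem statement calls for a single parameter triple $\{m_{[j]},w_{[j]},b_{[j]}\}$. I would handle this by concatenating the two architectures into one wider network with two output components, so that a single triple encodes both $\rho_j$ and $E_j$; alternatively, invoke Li's theorem twice and merge the two triples as block-diagonal weight matrices. A minor subtlety worth flagging is that $Loss^{pnp}_{GE^{(2)}}$ includes a $\partial_t$ of the Poisson residual and $Loss^{pnp}_{BC^{(2)}}$ includes a $\partial_t E^{nn}$ term, so Li's theorem must be applied with a slightly richer multi-index set than the naive reading of the PDE suggests; this causes no issue given the available regularity of $E$.
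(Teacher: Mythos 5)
Your proposal is correct and follows essentially the same route as the paper, which simply defers this statement to Theorem 3.4 of \cite{MR4116803}: apply Li's simultaneous-approximation theorem (Theorem \ref{VPFP_global}) to the a priori smooth pair $(\rho,E)$ (with $E$'s extra regularity recovered from the Poisson equation), insert the approximating networks into each loss term, use that the exact solution satisfies the equation, initial, and boundary conditions pointwise, and conclude each squared residual is $O(1/j^2)$. Your remarks on merging the two networks into one parameter triple and on the enlarged multi-index set forced by the $\partial_t$ terms in $Loss^{pnp}_{GE^{(2)}}$ and $Loss^{pnp}_{BC^{(2)}}$ are exactly the bookkeeping the citation leaves implicit.
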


Now we introduce our second main theorem, which shows that the sequence of DNN solutions converges to an analytic solution to the PNP system in a suitable function space when we minimize the total loss function $Loss^{pnp}_{Total}$. We also refer to Remark \ref{remark:VPFP_theory_simulation}, which explains how these main theorems are related to our Deep Learning algorithm.
\begin{theorem}\label{thm:pnp_converge} Assume that $\rho$ is a solution to \eqref{PNP} with \eqref{no-flux} which belongs to $\widehat{C}^{(1,2)}([0,T]\times[-1,1])$. Then, the Deep Neural Network solution $\rho^{nn}(t,x;m,w,b)$ satisfies that
\begin{equation}
\left\| \rho^{nn}(\cdot,\cdot;m,w,b)  - \rho\right\|_{L_t^{\infty}([0,T];L_x^{2}([-1,1]))} \leq CLoss^{pnp}_{Total}(\rho^{nn}),
\end{equation}
where $C$ is a positive constant depending only on $T$.
\end{theorem}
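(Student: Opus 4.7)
The plan is to adapt the energy method used for Theorem \ref{thm:vpfp_converge}, exploiting the fact that the PNP system is already parabolic in divergence form so that no Carrillo-type time rescaling is needed. First I would introduce residuals $d_{ge}^{(1)} \eqdef \partial_t \rho^{nn} - \partial_{xx}\rho^{nn} + \partial_x(\rho^{nn} E^{nn})$, $d_{ge}^{(2)} \eqdef \partial_x E^{nn} - \rho^{nn} + 1$, $d_{bc}^{(1)} \eqdef \partial_x \rho^{nn}|_{\partial\Omega}$, $d_{bc}^{(2)} \eqdef E^{nn}|_{\partial\Omega}$, and an initial residual $d_{ic} \eqdef \rho^{nn}(0,\cdot)-\rho_0$, whose space-time $L^2$ norms (together with the $t,x$-derivative norms of $d_{ge}^{(2)}$ and the $t$-derivative norm of $d_{bc}^{(2)}$) are controlled by the corresponding pieces of $Loss^{pnp}_{Total}$ by construction.

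Setting $w \eqdef \rho - \rho^{nn}$, one rewrites $\rho E - \rho^{nn} E^{nn} = wE + \rho^{nn}(E - E^{nn})$ and obtains the perturbed PDE $\partial_t w = \partial_{xx} w - \partial_x(wE + \rho^{nn}(E-E^{nn})) + d_{ge}^{(1)}$. I would multiply by $w$, integrate over $[-1,1]$, and integrate by parts. The diffusion yields the dissipation $-\|\partial_x w\|_{L^2_x}^2$; the boundary terms collapse onto $d_{bc}^{(1)}$ and $d_{bc}^{(2)}$ since $\partial_x\rho$ and $E$ vanish on $\partial\Omega$; the transport part produces $\int (\partial_x w)(wE + \rho^{nn}(E-E^{nn})) dx$ plus boundary residuals; and the forcing gives $\int w \, d_{ge}^{(1)} dx$, controlled by Young's inequality.

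To close the cross term, I would bound $E - E^{nn}$ pointwise in $x$ by integrating the Poisson residual from $x = -1$, using \eqref{PNP_E_ini}, to get $\|E - E^{nn}\|_{L^\infty_x} \lesssim \|w\|_{L^2_x} + \|d_{ge}^{(2)}\|_{L^2_x} + |d_{bc}^{(2)}|$. The a priori regularity $\rho \in \widehat{C}^{(1,2)}$ supplies $L^\infty$ bounds on $\rho$, $\partial_x\rho$, and $E$, so Cauchy--Schwarz and Young's inequality yield an inequality of the form $\frac{d}{dt}\|w\|_{L^2_x}^2 + \|\partial_x w\|_{L^2_x}^2 \le C_1 \|w\|_{L^2_x}^2 + C_2 L(t)$, where $L(t)$ collects the residuals and satisfies $\int_0^T L(t)\, dt \le C_3\, Loss^{pnp}_{Total}(\rho^{nn})$. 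Gronwall's inequality, together with $\|w(0)\|_{L^2_x}^2 = Loss^{pnp}_{IC^{(1)}}$, then delivers the stated bound with a constant depending only on $T$.

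The main obstacle will be the coupling term $\int \partial_x w \cdot \rho^{nn}(E - E^{nn}) dx$: a naive bound requires $\|\rho^{nn}\|_{L^\infty_x}$, which is not a priori available. I would circumvent this by writing $\rho^{nn} = \rho - w$, so that the troublesome part becomes cubic in $w$ and is controlled via the one-dimensional interpolation $\|w\|_{L^\infty_x} \lesssim \|w\|_{L^2_x}^{1/2}\|\partial_x w\|_{L^2_x}^{1/2} + \|w\|_{L^2_x}$, with the resulting top derivative absorbed into the parabolic dissipation. The higher-derivative residuals in $Loss^{pnp}_{GE^{(2)}}$ and the $\partial_t E^{nn}$ residual included in $Loss^{pnp}_{BC^{(2)}}$ enter precisely when one needs to control $\partial_x(E-E^{nn})$ and $\partial_t(E-E^{nn})$ in $L^2$, via applying $\partial_x$ and $\partial_t$ to the Poisson identity and integrating pointwise in $x$ as above; this is what makes those extra terms in the loss function non-redundant.
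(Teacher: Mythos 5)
Your overall strategy is the same as the paper's: the same residuals, the same energy identity obtained by multiplying the difference equation by $\rho-\rho^{nn}$ and integrating, the same pointwise bound on $E-E^{nn}$ by integrating the Poisson residual from $x=-1$, and Gr\"onwall at the end. The genuine gap is at the closing step. Writing $\rho^{nn}=\rho-w$ does not linearize the problem: the troublesome contribution becomes $\int_{-1}^{1}\partial_x w\, w\,(E-E^{nn})\,dx$, and since the only available bound is $\|E-E^{nn}\|_{L^\infty_x}\lesssim \|w\|_{L^2_x}+\text{(residuals)}$, Young's inequality leaves you with at best $\epsilon\|\partial_x w\|_{L^2_x}^2+C\|w\|_{L^2_x}^4$ (the Gagliardo--Nirenberg route $\|w\|_{L^\infty_x}\lesssim\|w\|_{L^2_x}^{1/2}\|\partial_x w\|_{L^2_x}^{1/2}+\|w\|_{L^2_x}$ makes this worse, not better, after absorbing $\|\partial_x w\|^2$). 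So the differential inequality you obtain is of Riccati type, $Y'\le C_5 Y+C_3Y^2+C_6L(t)$ with $Y=\|w\|_{L^2_x}^2$ --- exactly \eqref{PNP_energy_eq_reduce_3} in the paper --- and your claimed linear inequality $\tfrac{d}{dt}\|w\|^2+\|\partial_x w\|^2\le C_1\|w\|^2+C_2L(t)$ with constants depending only on $T$ does not follow from the stated manipulations. The paper closes this by a nonlinear Gr\"onwall lemma (\cite[Theorem 25]{MR2016992}) together with the implicit smallness of $Loss^{pnp}_{Total}$, which guarantees the bracket in \eqref{PNP_energy_eq_reduce_4} stays positive up to time $T$; without that (or an equivalent bootstrap/continuity argument exploiting smallness of the loss), the conclusion is not reached.

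A secondary inaccuracy: you say the extra $t$- and $x$-derivative terms in $Loss^{pnp}_{GE^{(2)}}$ and the $\partial_t E^{nn}$ term in $Loss^{pnp}_{BC^{(2)}}$ are needed to control $\partial_x(E-E^{nn})$ and $\partial_t(E-E^{nn})$ in $L^2$. In the paper's argument they serve a different purpose: the energy inequality is pointwise in $t$ while the loss is time-integrated, so one needs $\|d_{bc,j}^{(2)}\|_{L^\infty_t}$ and $\|d_{ge,j}^{(2)}\|_{L^\infty_{t,x}}$ bounds via Sobolev embedding of the $H^1$-type pieces of the loss (inequalities \eqref{smallness_dbc2} and \eqref{smallness_dge2}); this smallness is also what tames the boundary term $\int_{\partial\Omega}w^2E^{nn}n_x\,dS_x$, which is quadratic in $w$ with coefficient $d_{bc,j}^{(2)}$ and otherwise cannot be absorbed after the trace theorem. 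Your sketch of the boundary terms glosses over this, and it is another place where smallness of the loss, not just its finiteness, enters the proof.
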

\begin{proof}We define the error values of the neural network outputs $\rho^{nn}$ and $E^{nn}$ as the following equations:
$$d_{ge,j}^{(1)}(t,x)\eqdef -\left[\partial_{t}-\partial_{xx}\right]\rho^{nn}+\partial_{x}(\rho^{nn}E^{nn})$$
and
$$ d_{ge,j}^{(2)}(t,x)\eqdef -(\partial_xE^{nn} - \rho^{nn}(t,x)+h(x))$$
for $(t,x)\in[0,T]\times[-1,1]$,
$$d_{bc,j}^{(1)}(t,x)\eqdef -\partial_x\rho^{nn}(t,x)$$
and
$$ d_{bc,j}^{(2)}(t,x)\eqdef -E^{nn}(t,x)$$
for $x=\pm 1$. Then, we consider the following equation on the difference between $\rho$ and $\rho^{nn}$ for each fixed $j$ on the compact set of $t,x$ only as 
\begin{equation}\label{PNP_diff_ge_1}
\left[\partial_{t}-\partial_{xx}\right]\{\rho-\rho^{nn}\}+\partial_{x}(\rho E - \rho^{nn}E^{nn})=d_{ge,j}^{(1)}(t,x)
\end{equation}
Then we derive the inequality below by multiplying $2(\rho-\rho^{nn})$ onto \eqref{PNP_diff_ge_1} and integrating it over $[-1,1]$ as
\begin{multline}\label{PNP_energy_eq}
\int_{-1}^{1}\frac{\partial}{\partial t}(\rho-\rho^{nn})^{2}(t,x)dx -2\int_{-1}^1(\rho - \rho^{nn})\partial_{xx}(\rho - \rho^{nn})dx
\\ = -2\int_{-1}^1(\rho - \rho^{nn})\partial_x(\rho E - \rho^{nn}E^{nn})dx + 2\langle d_{ge,j}^{(1)},(\rho-\rho^{nn})\rangle, 
\end{multline}
where $\langle\cdot,\cdot\rangle$ denotes the standard inner product on ${L^2_x([-1,1])}$. On the left-hand side of \eqref{PNP_energy_eq}, we note that
$$\int_{-1}^{1}\frac{\partial}{\partial t}(\rho-\rho^{nn})^{2}(t,x)dx=\frac{\partial}{\partial t}\left\|\rho-\rho^{nn}\right\|_{L^2_x([-1,1])}^2$$

and
\begin{multline}\int_{-1}^1(\rho - \rho^{nn})\partial_{xx}(\rho - \rho^{nn})dx
\\=\int_{\partial[-1,1]}(\rho - \rho^{nn})d_{bc,j}^{(1)}n_xdS_x-\left\|\partial_x(\rho - \rho^{nn})\right\|_{L^2_x([-1,1])}^2
\\\leq \frac{1}{2}\left\|\rho-\rho^{nn}\right\|_{L^2_x(\partial[-1,1])}^2 + \frac{1}{2}\left\|d_{bc,j}^{(1)}\right\|_{L^2_x(\partial[-1,1])}^2 - \left\|\partial_x(\rho - \rho^{nn})\right\|_{L^2_x([-1,1])}^2
\\\leq \frac{1}{2}\left(\left\|\rho-\rho^{nn}\right\|_{L^2_x([-1,1])}^2 + \left\|\partial_x(\rho-\rho^{nn})\right\|_{L^2_x([-1,1])}^2\right) \\+ \frac{1}{2}\left\|d_{bc,j}^{(1)}\right\|_{L^2_x(\partial[-1,1])}^2 - \left\|\partial_x(\rho - \rho^{nn})\right\|_{L^2_x([-1,1])}^2
\\\leq \frac{1}{2}\left\|\rho-\rho^{nn}\right\|_{L^2_x([-1,1])}^2 - \frac{1}{2}\left\|\partial_x(\rho-\rho^{nn})\right\|_{L^2_x([-1,1])}^2 + \frac{1}{2}\left\|d_{bc,j}^{(1)}\right\|_{L^2_x(\partial[-1,1])}^2,
\end{multline}
by the trace theorem for $\rho-\rho^{nn}$. Since $\rho$ belongs to $\widehat{C}^{(1,2)}([0,T]\times[-1,1])$ and $\rho^{nn}$ is a continuous function with respect to $x$, it implies that $|(\rho-\rho^{nn})(t,x)|<M_1$ on the compact domain $x\in\partial[-1,1]$ for some positive constant $M_1$. Also for the second term on the right-hand side of \eqref{PNP_energy_eq}, we note that
$$ \langle d_{ge,j}^{(1)},(\rho-\rho^{nn})\rangle \leq \frac{1}{2} (\left\|d_{ge,j}^{(1)}\right\|_{L^2_x([-1,1])}^2 + \left\|\rho-\rho^{nn}\right\|_{L^2_x([-1,1])}^2).$$

Therefore, we reduce \eqref{PNP_energy_eq} to
\begin{multline}\label{PNP_energy_eq_reduce}
\frac{\partial}{\partial t}\left\|\rho-\rho^{nn}\right\|_{L^2_x([-1,1])}^2 \\ -\left\|\rho-\rho^{nn}\right\|_{L^2_x([-1,1])}^2 + \left\|\partial_x(\rho-\rho^{nn})\right\|_{L^2_x([-1,1])}^2 - \left\|d_{bc,j}^{(1)}\right\|_{L^2_x(\partial[-1,1])}^2
\\ \leq -2\int_{-1}^1(\rho - \rho^{nn})\partial_x(\rho E - \rho^{nn}E^{nn})dx + \left\|d_{ge,j}^{(1)}\right\|_{L^2_x([-1,1])}^2 + \left\|\rho-\rho^{nn}\right\|_{L^2_x([-1,1])}^2.
\end{multline}
Also, we reduce the absolute value of the first term on the right hand side of \eqref{PNP_energy_eq_reduce} to
\begin{multline}\label{b_1_b_2}
\left| -2\int_{-1}^1(\rho - \rho^{nn})\partial_x(\rho E - \rho^{nn}E^{nn})dx\right|\\
\leq \underbrace{2\left|\int_{\partial[-1,1]}  (\rho - \rho^{nn})(\rho E - \rho^{nn}E^{nn})n_xdS_x\right|}_{\eqdef B_1(t)}\\
+ \underbrace{2\left|\int_{-1}^1  \partial_x(\rho - \rho^{nn})(\rho E - \rho^{nn}E^{nn})dx\right|}_{\eqdef B_2(t)},
\end{multline}
by the integration by parts with respect to $x$. Note that we have
\begin{multline}\label{PNP_B1}
  B_1(t) = 2\left|\int_{\partial[-1,1]}  (\rho - \rho^{nn})(-\rho^{nn}E^{nn})n_xdS_x\right|
  \\ = 2\left|\int_{\partial[-1,1]}  (\rho - \rho^{nn})\left((\rho-\rho^{nn})-\rho\right)E^{nn}n_xdS_x\right|
  \\ \leq 2\left|\int_{\partial[-1,1]}  (\rho - \rho^{nn})^2d_{bc,j}^{(2)}n_xdS_x\right| + 2\left|\int_{\partial[-1,1]}  (\rho - \rho^{nn})\rho d_{bc,j}^{(2)}n_xdS_x\right|.
\end{multline}
Therefore, by Holder's inequality, trace theorem and the Cauchy-Schwarz inequality with $\epsilon_0$, we can bound $B_1(t)$ as
\begin{multline}
  B_1(t) \leq 2\left\|d_{bc,j}^{(2)}\right\|_{L^\infty_x(\partial[-1,1])}\left\|\rho-\rho^{nn}\right\|_{L^2_x(\partial[-1,1])}^2
  \\+ 2M_1\left\|\rho-\rho^{nn}\right\|_{L^2_x(\partial[-1,1])}\left\|d_{bc,j}^{(2)}\right\|_{L^2_x(\partial[-1,1])}
  \\\leq \frac{1}{3}\left\|\rho-\rho^{nn}\right\|_{L^2_x(\partial[-1,1])}^2 + M_1\left(\epsilon_0\left\|\rho-\rho^{nn}\right\|_{L^2_x(\partial[-1,1])}^2+\frac{1}{\epsilon_0}\left\|d_{bc,j}^{(2)}\right\|_{L^2_x(\partial[-1,1])}^2\right)
  \\ \leq \left(\frac{1}{3}+M_1\epsilon_0\right)\left\|\rho-\rho^{nn}\right\|_{L^2_x(\partial[-1,1])}^2 + \frac{M_1}{\epsilon_0}\left\|d_{bc,j}^{(2)}\right\|_{L^2_x(\partial[-1,1])}^2
  \\ \leq \left(\frac{1}{3}+M_1\epsilon_0\right)\left( \left\|\rho-\rho^{nn}\right\|_{L^2_x([-1,1])}^2+\left\|\partial_x(\rho-\rho^{nn})\right\|_{L^2_x([-1,1])}^2 \right)
  \\ + \frac{M_1}{\epsilon_0}\left\|d_{bc,j}^{(2)}\right\|_{L^2_x(\partial[-1,1])}^2,
\end{multline}
since we can reduce $Loss^{pnp}_{BC^{(2)}}$ defined in \eqref{pnp_loss_bc2} sufficently small so we can bound $\left\|d_{bc,j}^{(2)}\right\|_{L^\infty_x(\partial[-1,1])}$ for all time $t\in[0,T]$ using the Sobolev embedding theorem as follows:
\begin{equation}\begin{split}\label{smallness_dbc2}
&\left\|\left\|d_{bc,j}^{(2)}\right\|_{L^\infty_x(\partial[-1,1])}\right\|_{L^\infty_t([0,T])}
\\&\qquad\leq\left\|d_{bc,j}^{(2)}(t,x=-1)\right\|_{L^\infty_t([0,T])}+\left\|d_{bc,j}^{(2)}(t,x=1)\right\|_{L^\infty_t([0,T])}
\\&\qquad\leq C_0\left(\left\|d_{bc,j}^{(2)}(t,x=-1)\right\|_{H^1_t([0,T])}+\left\|d_{bc,j}^{(2)}(t,x=1)\right\|_{H^1_t([0,T])}\right)
\\&\qquad\leq \sqrt{2}C_0\left( \left\|d_{bc,j}^{(2)}(t,x=-1)\right\|_{H^1_t([0,T])}^2+\left\|d_{bc,j}^{(2)}(t,x=1)\right\|_{H^1_t([0,T])}^2 \right)
\\&\qquad= \sqrt{2}C_0Loss^{pnp}_{BC^{(2)}}<\frac{1}{6},
\end{split}\end{equation}
for some positive constant $C_0$. We have some positive constant $M_1$ satisfying $|\rho(t,x)|<M_1$ on $x\in\partial[-1,1]$, since $\rho$ belongs to $\widehat{C}^{(1,2)}([0,T]\times[-1,1])$. By choosing a sufficently small $\epsilon_0$ so that $\frac{1}{3}+M_1\epsilon_0<\frac{1}{2}$, then we can bound $B_1(t)$ as
\begin{multline}\label{PNP_B1_final}
	B_1(t)\leq \frac{1}{2}\left\|\rho-\rho^{nn}\right\|_{L^2_x([-1,1])}^2+\frac{1}{2}\left\|\partial_x(\rho-\rho^{nn})\right\|_{L^2_x([-1,1])}^2
	\\+ C_1\left\|d_{bc,j}^{(2)}\right\|_{L^2_x(\partial[-1,1])}^2,
\end{multline}
for some positive constant $C_1$.

To bound the second term $B_2(t)$ on the right-hand side of the inequality \eqref{b_1_b_2}, we have
\begin{multline}\label{PNP_B2}
  B_2(t) \leq 2\left|\int_{-1}^1  \partial_x(\rho - \rho^{nn})\rho(E-E^{nn})dx\right|
  \\+ 2\left|\int_{-1}^1  \partial_x(\rho - \rho^{nn})E^{nn}(\rho-\rho^{nn})dx\right|
  \\ \leq \underbrace{\left\|\rho\right\|_{L^\infty_x([-1,1])}\left( \frac{1}{\epsilon_1}\left\|E-E^{nn}\right\|_{L^\infty_x([-1,1])}^2+\epsilon_1\left\|\partial_x(\rho-\rho^{nn})\right\|_{L^2_x([-1,1])}^2 \right)}_{\eqdef B_{2,1}(t)}
  \\+ \underbrace{2\left\|E^{nn}\right\|_{L^\infty_x([-1,1])}\int_{-1}^1|(\rho-\rho^{nn})\partial_x(\rho-\rho^{nn})|dx}_{\eqdef B_{2,2}(t)},
\end{multline}
by the Cauchy-Schwarz inequality with $\epsilon_1$. We consider the equation on the difference between $E$ and $E^{nn}$ for each fixed $j$ on the compact set of $t,x$ only as
\begin{multline}
\left|(E-E^{nn})(t,x)\right|
\\= \left|\int_{-1}^x\left((\rho-\rho^{nn})(t,x')+d_{ge,j}^{(2)}(t,x')\right) dx' + (E-E^{nn})(t,x=-1)\right|
\\ \leq \int_{-1}^x|\rho-\rho^{nn}|(t,x')dx'+\int_{-1}^x|d_{ge,j}^{(2)}(t,x')|dx' + \left|E^{nn}(t,x=-1)\right|. 
\end{multline}
Thus, we have
\begin{multline}\label{diff_E_bdd}
\left\|E-E^{nn}\right\|_{L_x^\infty}\leq \left\|\rho-\rho^{nn}\right\|_{L^1([-1,1])} + \left\| d_{ge,j}^{(2)}\right\|_{L^1([-1,1])} + \left|E^{nn}(t,x=-1)\right|
\\ \leq \sqrt{2}\left\|\rho-\rho^{nn}\right\|_{L^2_x([-1,1])} + \sqrt{2}\left\| d_{ge,j}^{(2)}\right\|_{L^2_x([-1,1])}
\\+ \left(\left|E^{nn}(t,x=-1)\right|+\left|E^{nn}(t,x=1)\right|\right)
\\ \leq \sqrt{2}\left\|\rho-\rho^{nn}\right\|_{L^2_x([-1,1])} + \sqrt{2}\left\| d_{ge,j}^{(2)}\right\|_{L^2_x([-1,1])} + \left\|d_{bc,j}^{(2)}\right\|_{L^1(\partial[-1,1])}
\\ \leq \sqrt{2}\left(\left\|\rho-\rho^{nn}\right\|_{L^2_x([-1,1])} + \left\| d_{ge,j}^{(2)}\right\|_{L^2_x([-1,1])} + \left\|d_{bc,j}^{(2)}\right\|_{L^2_x(\partial[-1,1])}\right),
\end{multline}
by Holder's inequality. Then the first term $B_{2,1}(t)$ on the right-hand side of the inequality \eqref{PNP_B2} is bounded as
\begin{multline}\label{PNP_B2_1}
B_{2,1}(t)
\\\leq M_2\Bigg( \frac{2}{\epsilon_1}\left(\left\|\rho-\rho^{nn}\right\|_{L^2_x([-1,1])} + \left\| d_{ge,j}^{(2)}\right\|_{L^2_x([-1,1])} + \left\|d_{bc,j}^{(2)}\right\|_{L^2_x(\partial[-1,1])}\right)^2
\\ +\epsilon_1\left\|\partial_x(\rho-\rho^{nn})\right\|_{L^2_x([-1,1])}^2 \Bigg)
\\\leq M_2\Bigg( \frac{6}{\epsilon_1}\left(\left\|\rho-\rho^{nn}\right\|_{L^2_x([-1,1])}^2 + \left\| d_{ge,j}^{(2)}\right\|_{L^2_x([-1,1])}^2+\left\|d_{bc,j}^{(2)}\right\|_{L^2_x(\partial[-1,1])}^2\right)
\\ +\epsilon_1\left\|\partial_x(\rho-\rho^{nn})\right\|_{L^2_x([-1,1])}^2 \Bigg),
\end{multline}
with $M_2$ satisfies $|\rho(t,x)|<M_2$ on $x\in[-1,1]$ since the solution $\rho$ belongs to $\widehat{C}^{(1,2)}([0,T]\times[-1,1])$.

Now, we consider the second term $B_{2,2}(t)$ on the right-hand side of the inequality \eqref{PNP_B2}. Since we reduce $Loss^{pnp}_{GE^{(2)}}$ sufficiently small, we can bound the $\left\|d_{ge,j}^{(2)}\right\|_{L^2_x([-1,1])}$ for all time $t\in[0,T]$ as
\begin{multline}\label{smallness_dge2}
\left\|\left\|d_{ge,j}^{(2)}\right\|_{L^2_x([-1,1])}\right\|_{L^\infty_x([0,T])} \leq \left\|\left\|d_{ge,j}^{(2)}\right\|_{L^\infty_x([-1,1])}\right\|_{L^\infty_x([0,T])}
\\ \leq \left\|d_{ge,j}^{(2)}\right\|_{L^\infty_x([0,T]\times[-1,1])} \leq C_2\left\|d_{ge,j}^{(2)}\right\|_{H^1_{t,x}([0,T]\times[-1,1])} = C_2 {Loss^{pnp}_{GE^{(2)}}}^\frac{1}{2}\leq M_4,
\end{multline}
for some positive constant $M_4$ by the Sobolev embedding theorem. Therfore, we can bound $\left\|E-E^{nn}\right\|_{L^\infty_x([-1,1])}$ in the inequality \eqref{diff_E_bdd} as
\begin{multline}
	\left\|E-E^{nn}\right\|_{L^\infty_x([-1,1])}
	\\\leq \sqrt{2}\left\|\rho-\rho^{nn}\right\|_{L^2_x([-1,1])} + \sqrt{2}\underbrace{\left\| d_{ge,j}^{(2)}\right\|_{L^2_x([-1,1])}}_{\leq M_4} + \sqrt{2}\underbrace{\left\|d_{bc,j}^{(2)}\right\|_{L^\infty_x(\partial[-1,1])}}_{\leq\frac{1}{6}}
	\\\leq \sqrt{2}\left\|\rho-\rho^{nn}\right\|_{L^2_x([-1,1])} + M_4 + \frac{1}{6}
\end{multline}
using the inequalities \eqref{smallness_dbc2} and \eqref{smallness_dge2}. Using this bound, we can bound the second term $B_{2,2}(t)$ on the right-hand side of the inequality \eqref{PNP_B2} as
\begin{align*}
B_{2,2}(t)&\leq 2\left\|\rho-\rho^{nn}\right\|_{L^2_x}\left\|\partial_x(\rho-\rho^{nn})\right\|_{L^2_x}\left( \left\|E-E^{nn}\right\|_{L^\infty_x([-1,1])}+ \left\|-E\right\|_{L^\infty_x([-1,1])} \right)
\\&\leq 2\left\|\rho-\rho^{nn}\right\|_{L^2_x}\left\|\partial_x(\rho-\rho^{nn})\right\|_{L^2_x}
\\&\times\left(\sqrt{2}\left\|\rho-\rho^{nn}\right\|_{L^2_x([-1,1])} + \sqrt{2}M_4 + \frac{\sqrt{2}}{6} + M_3\right).
\end{align*}
Defining $M_5\eqdef\sqrt{2}M_4 + \frac{\sqrt{2}}{6} + M_3$ yields that
\begin{multline}\label{PNP_B3}
B_{2,2}(t)
\\\leq 2\sqrt{2}\left\|\rho-\rho^{nn}\right\|_{L^2_x}^2\left\|\partial_x(\rho-\rho^{nn})\right\|_{L^2_x} + 2M_5\left\|\rho-\rho^{nn}\right\|_{L^2_x}\left\|\partial_x(\rho-\rho^{nn})\right\|_{L^2_x}
\\\leq \frac{\sqrt{2}}{\epsilon_2}\left\|\rho-\rho^{nn}\right\|_{L^2_x}^4+\sqrt{2}\epsilon_2\left\|\partial_x(\rho-\rho^{nn})\right\|_{L^2_x}^2 
\\+ M_5\left(\frac{1}{\epsilon_2}\left\|\rho-\rho^{nn}\right\|_{L^2_x}^2 + \epsilon_2\left\|\partial_x(\rho-\rho^{nn})\right\|_{L^2_x}^2\right)
\end{multline}
for any positive constant $\epsilon_2$ and some positive constant $M_3$ satisfying $|E(t,x)|<M_3$ on $x\in[-1,1]$ since $E$ is the continuous function with respect to $x$. Therefore, we can bound $B_{2,2}(t)$ as
\begin{multline}
B_{2,2}(t)\leq \frac{M_5}{\epsilon_2} \left\|\rho-\rho^{nn}\right\|_{L^2_x([-1,1])}^2 + \frac{\sqrt{2}}{\epsilon_2}\left\|\rho-\rho^{nn}\right\|_{L^2_x([-1,1])}^4
\\+ (M_5+\sqrt{2})\epsilon_2\left\|\partial_x(\rho-\rho^{nn})\right\|_{L^2_x([-1,1])}^2.
\end{multline}

Therefore, we can bound $B_2(t)\leq B_{2,1}(t)+B_{2,2}(t)$ in the inequality \eqref{PNP_B3} as
\begin{multline}\label{PNP_B2_4}
  B_2(t)
  \\\leq M_2\Bigg( \frac{6}{\epsilon_1}\left(\left\|\rho-\rho^{nn}\right\|_{L^2_x}^2 + \left\| d_{ge,j}^{(2)}\right\|_{L^2_x}^2+\left\|d_{bc,j}^{(2)}\right\|_{L^2_x(\partial[-1,1])}^2\right) +\epsilon_1\left\|\partial_x(\rho-\rho^{nn})\right\|_{L^2_x}^2 \Bigg)
  \\ + \frac{M_5}{\epsilon_2} \left\|\rho-\rho^{nn}\right\|_{L^2_x}^2 + \frac{\sqrt{2}}{\epsilon_2}\left\|\rho-\rho^{nn}\right\|_{L^2_x}^4 + (M_5+\sqrt{2})\epsilon_2\left\|\partial_x(\rho-\rho^{nn})\right\|_{L^2_x}^2
  \\ \leq \left(\frac{6M_2}{\epsilon_1}+\frac{M_5}{\epsilon_2}\right)\left\|\rho-\rho^{nn}\right\|_{L^2_x}^2 +  \frac{\sqrt{2}}{\epsilon_2} \left\|\rho-\rho^{nn}\right\|_{L^2_x}^4 + \frac{6M_2}{\epsilon_1}\left\|d_{ge,j}^{(2)}\right\|_{L^2_x}^2
  \\+ \frac{6M_2}{\epsilon_1}\left\|d_{bc,j}^{(2)}\right\|_{L^2_x(\partial[-1,1])}^2 + \left(M_2\epsilon_1+(M_5+\sqrt{2})\epsilon_2\right)\left\|\partial_x(\rho-\rho^{nn})\right\|_{L^2_x}^2.
\end{multline}
by Holder's inequality.
By choosing sufficently small $\epsilon_1$ and $\epsilon_2$ so that $$M_2\epsilon_1+(M_5+\sqrt{2})\epsilon_2<\frac{1}{2},$$ then $B_2(t)$ is bounded as
\begin{multline}\label{PNP_B2_final}
	B_2(t)\leq C_3\left(\left\|\rho-\rho^{nn}\right\|_{L^2_x([-1,1])}^2 + \left\|\rho-\rho^{nn}\right\|_{L^2_x([-1,1])}^4\right)
	\\ + C_4\left(\left\|d_{ge,j}^{(2)}\right\|_{L^2_x([-1,1])}^2 + \left\|d_{bc,j}^{(2)}\right\|_{L^2_x(\partial[-1,1])}^2\right) + \frac{1}{2}\left\|\partial_x(\rho-\rho^{nn})\right\|_{L^2_x([-1,1])}^2
\end{multline}
for some positive constant $C_3$ and $C_4$. Therefore, by using the boundedness of $B_1(t)$ in \eqref{PNP_B1_final} and $B_2(t)$ in \eqref{PNP_B2_final}, the inequality \eqref{b_1_b_2} yields that
\begin{multline}
\left| -2\int_{-1}^1(\rho - \rho^{nn})\partial_x(\rho E - \rho^{nn}E^{nn})dx\right| \leq B_1(t) + B_2(t) 
\\\leq \frac{1}{2}\left\|\rho-\rho^{nn}\right\|_{L^2_x([-1,1])}^2+\frac{1}{2}\left\|\partial_x(\rho-\rho^{nn})\right\|_{L^2_x([-1,1])}^2 + C_1\left\|d_{bc,j}^{(2)}\right\|_{L^2_x(\partial[-1,1])}^2
\\+C_3\left(\left\|\rho-\rho^{nn}\right\|_{L^2_x([-1,1])}^2 + \left\|\rho-\rho^{nn}\right\|_{L^2_x([-1,1])}^4\right)
\\ + C_4\left(\left\|d_{ge,j}^{(2)}\right\|_{L^2_x([-1,1])}^2 + \left\|d_{bc,j}^{(2)}\right\|_{L^2_x(\partial[-1,1])}^2\right) + \frac{1}{2}\left\|\partial_x(\rho-\rho^{nn})\right\|_{L^2_x([-1,1])}^2
\\\leq \left(C_3+\frac{1}{2}\right)\left\|\rho-\rho^{nn}\right\|_{L^2_x([-1,1])}^2 + C_3 \left\|\rho-\rho^{nn}\right\|_{L^2_x([-1,1])}^4 + \left\|\partial_x(\rho-\rho^{nn})\right\|_{L^2_x([-1,1])}^2
\\+ \left(C_1+C_4\right)\left\|d_{bc,j}^{(2)}\right\|_{L^2_x(\partial[-1,1])}^2 + C_4\left\|d_{ge,j}^{(2)}\right\|_{L^2_x([-1,1])}^2.
\end{multline}

Therefore, we reduce \eqref{PNP_energy_eq_reduce} to
\begin{multline}\label{PNP_energy_eq_reduce_2}
\frac{\partial}{\partial t}\overbrace{\left\|\rho-\rho^{nn}\right\|_{L^2_x([-1,1])}^2}^{Y(t)\eqdef }  -\left\|\rho-\rho^{nn}\right\|_{L^2_x([-1,1])}^2 + \left\|\partial_x(\rho-\rho^{nn})\right\|_{L^2_x([-1,1])}^2
\\- \left\|d_{bc,j}^{(1)}\right\|_{L^2_x(\partial[-1,1])}^2
\\ \leq \left(C_3+\frac{1}{2}\right)\left\|\rho-\rho^{nn}\right\|_{L^2_x}^2 + C_3 \left\|\rho-\rho^{nn}\right\|_{L^2_x}^4 + \left\|\partial_x(\rho-\rho^{nn})\right\|_{L^2_x}^2
\\+ (C_1+C_4)\left\|d_{bc,j}^{(2)}\right\|_{L^2_x(\partial[-1,1])}^2 + C_4\left\|d_{ge,j}^{(2)}\right\|_{L^2_x}^2 + \left\|d_{ge,j}^{(1)}\right\|_{L^2_x}^2+ \left\|\rho-\rho^{nn}\right\|_{L^2_x}^2.
\end{multline}
If we define the constant $C_5\eqdef C_2+\frac{5}{2}$, then we can rewrite inequality \eqref{PNP_energy_eq_reduce_2} as follows
\begin{multline}\label{PNP_energy_eq_reduce_3}
Y'(t) - C_5Y(t) \leq C_3Y(t)^2
\\+ C_6\underbrace{\left(\left\| d_{ge,j}^{(1)}\right\|_{L^2_x([-1,1])}^2+\left\| d_{ge,j}^{(2)}\right\|_{L^2_x([-1,1])}^2+\left\|d_{bc,j}^{(1)}\right\|_{L^2_x(\partial[-1,1])}^2+\left\|d_{bc,j}^{(2)}\right\|_{L^2_x(\partial[-1,1])}^2\right)}_{\eqdef L(t)},
\end{multline}
with some positive constant $C_6$. Multiplying \eqref{PNP_energy_eq_reduce_3} by $e^{-C_5t}$ and integrating it over $[0,t]$ for $t<T$, we have
\begin{multline}
e^{-C_5t}Y(t)-Y(0)\leq C_3\int_0^{t}e^{-C_5s}Y(s)^2ds + C_6\int_{0}^{t}e^{-C_5s}L(s)ds
\\ \leq C_3\int_0^{t}Y(s)^2ds + C_6\int_{0}^{t}L(s)ds.
\end{multline}
Therefore, we have the inequality
\begin{equation}
Y(t)\leq e^{C_5t}\left( Y(0)+C_6\int_{0}^{t}L(s)ds \right) + C_3e^{C_5t}\int_0^{t}Y(s)^2ds.
\end{equation}
By \cite[Theorem 25]{MR2016992}, we have
\begin{multline}\label{PNP_energy_eq_reduce_4}
Y(t)\\
\leq e^{C_5t}\left( Y(0)+C_6\int_{0}^{t}L(s)ds \right)\left[ 1-C_3\int_0^{t}e^{2C_5s'}\left( Y(0)+C_6\int_{0}^{s'}L(s)ds \right)ds' \right]^{-1}
\end{multline}
for $0\leq t \leq \beta$, where
\begin{equation}\label{beta_condition}
	\beta=\sup\bigg\{ t\in[0,T]\Big|C_3\int_0^{t}e^{2C_5s'}\underbrace{\left( Y(0)+C_6\int_{0}^{s'}L(s)ds \right)}_{\eqdef h(s')}ds'<1 \bigg\}.
\end{equation}
Note that $Y(0)=Loss^{pnp}_{IC}(\rho^{nn})$ and
$$Y(0)+C_6\int_{0}^{t}L(s)ds \leq Y(0)+C_6\int_{0}^{T}L(s)ds=Loss^{pnp}_{Total}(\rho^{nn}).$$
Therefore, the value of $h(s')$ in \eqref{beta_condition} is bounded as $Loss^{pnp}_{Total}(\rho^{nn})$ which can be sufficiently small. Therefore, we can choose $t=T$ in \eqref{PNP_energy_eq_reduce_4} to obtain that
\begin{multline}
\left\|\rho-\rho^{nn}\right\|_{L^2_x([-1,1])}^2
\\\leq e^{C_5T}Loss^{pnp}_{Total}(\rho^{nn})\left[ 1-C_3\int_0^{T}e^{2C_5s'}\left( Y(0)+C_6\int_{0}^{s'}L(s)ds \right)ds' \right]^{-1}
\\\leq C Loss^{pnp}_{Total}(\rho^{nn})
\end{multline}
for some positive constant $C$. Therefore, this completes the proof of Theorem.
\end{proof}

\subsection{Neural Network simulations}\label{subsec:part3_simulation}In this section, we provide numerical simulations for the solutions $\rho^{nn}(t,x;m,w,b)$ and $E^{nn}(t,x;m,w,b)$ to the PNP system \eqref{PNP}. We set the initial condition of $\rho^{nn}(t,x;m,w,b)$ as follows:
\begin{equation}\label{PNP_initial}
\rho_0(0,x)=\int_\R dv f_0(x,v) = 8e^{x-1}.
\end{equation}
Note that we set the initial condition \eqref{PNP_initial} which satisfies $\rho(0,x)=\rho_0(x)=\int_\R dv f_0(x,v)$ to compare the convergence on the solutions of the VPFP system with the soluitons of the PNP system in Part IV. We also set the background charge $h(x)$ as constant to satisfy $\int_\Omega\rho(0,x)-h(x)=0$. The details of our Deep Learning algorithm are explained in Section \ref{subsec:dnn_architerture}, Section \ref{subsec:grid}, and the summary of Algorithm \ref{algorithm}.

Figure \ref{fig:PNP_1} shows the total density \eqref{mass rho conservation} and the free energy \eqref{PNP_FE} of the PNP system. As shown in the left plot in Figure \ref{fig:PNP_1}, the total density $\text{Mass}_\rho(t)$ of the neural network solution $\rho^{nn}(t,x;m,w,b)$ is conserved. It is well-matched to the theoretical result as in \eqref{mass rho conservation}, which is an important property in the PNP system with the no-flux boundary condition \eqref{no-flux}. The right plot in Figure \ref{fig:PNP_1} shows the free energy of the neural network solution with the steady-state value via the red-dotted line. We compute the steady-state value of the free energy using the steady state of the PNP solution, $\rho_\infty(x)$ and $E_\infty(x)$, in \eqref{equilibrium_PNP}. We observe that the free energy is non-increasing as shown in the right plot in Figure \ref{fig:PNP_1}. It verifies that the neural network solutions, $\rho^{nn}(t,x;m,w,b)$ and $E^{nn}(t,x;m,w,b)$, of the PNP system satisfy the dissipation law of the free energy as explained in \eqref{FE_dissipation}.

Also, we expect that the free energy decreases exponentially to the steady-state based on Theorem 1.2 in \cite{biler2000long}. In Figure \ref{fig:PNP_2}, the plot shows the time evolution of the free energy of the neural network solution in a log-linear scale. We compute the decreasing rate of the free energy with the difference between the free energy $\text{FE}_\rho(t)$ and the steady-state of the free energy $\text{FE}_{\rho,\infty}$ at $t=1$. We also denote the algebraic rates and the geometric rates in log-linear scale. We can observe that the decreasing rate of the free energy is almost simlilar to the geometric rate $Ce^{-11.7}t$, which is a linear function in Figure \ref{fig:PNP_2}, except for a small error near the final time.

\begin{figure}[H]
  \includegraphics[width=\textwidth, draft=false]{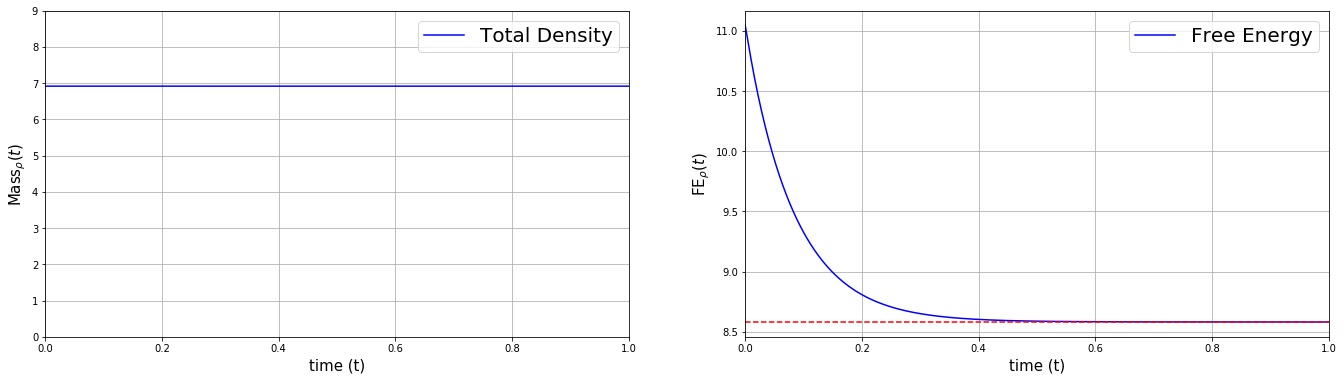}
  \caption{The time-asymptotic behaviors of the total density $\text{Mass}_\rho(t)$ and the free energy $\text{FE}_\rho(t)$ of the PNP system. In the second plot, the steady-state value of the free energy is indicated in the red-dotted line. Note that the free energy is monotonically decreasing.}
  \label{fig:PNP_1}
\end{figure}

\begin{figure}[H]
  \includegraphics[width=0.7\textwidth, draft=false]{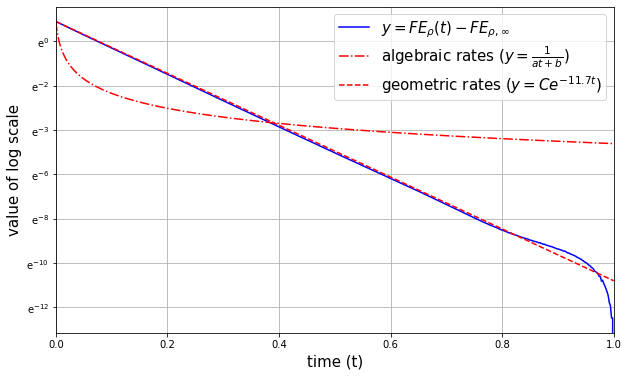}
  \caption{The time-asymptotic behaviors of the difference between the free energy $\text{FE}_\rho(t)$ and the steady-state of the free energy $\text{FE}_{\rho,\infty}$ in log-linear scale. We consider the numerical solution at $t=1$ as the steady state of the free energy. Note that this plot verifies the exponential decay of the approximated free energy.}
  \label{fig:PNP_2}
\end{figure}

Figure \ref{fig:PNP_3} shows the poinstwise values of each approximated neural network solution $\rho^{nn}(t,x;m,w,b)$ and $E^{nn}(t,x;m,w,b)$ in different colors as time $t$ varies at each position $x$ from -1 to 1. We expect that the neural network solutions $\rho^{nn}(t,x;m,w,b)$ and $E^{nn}(t,x;m,w,b)$ converge pointwisely to each equilibrium 
$$\rho_\infty(x)=C_{pnp}
\text{ and
}E_\infty(x)=0,$$
which is precisely explained in \eqref{equilibrium_PNP}. As shown in the first plot in Figure \ref{fig:PNP_3}, the neural network solution $\rho^{nn}(t,x;m,w,b)$ converges to constant for all $x$. It is well consistent to the theoretical supports provided in \eqref{equilibrium_PNP}. Also, the second plot in Figure \ref{fig:PNP_3} shows that the neural network solution $E^{nn}(t,x;m,w,b)$ converges to zero for all $x\in[-1,1]$ as $t$ increases. This simulation result also well matches the expected steady state of the PNP system as in \eqref{equilibrium_PNP}.

\begin{figure}[H]
  \includegraphics[width=\textwidth, draft=false]{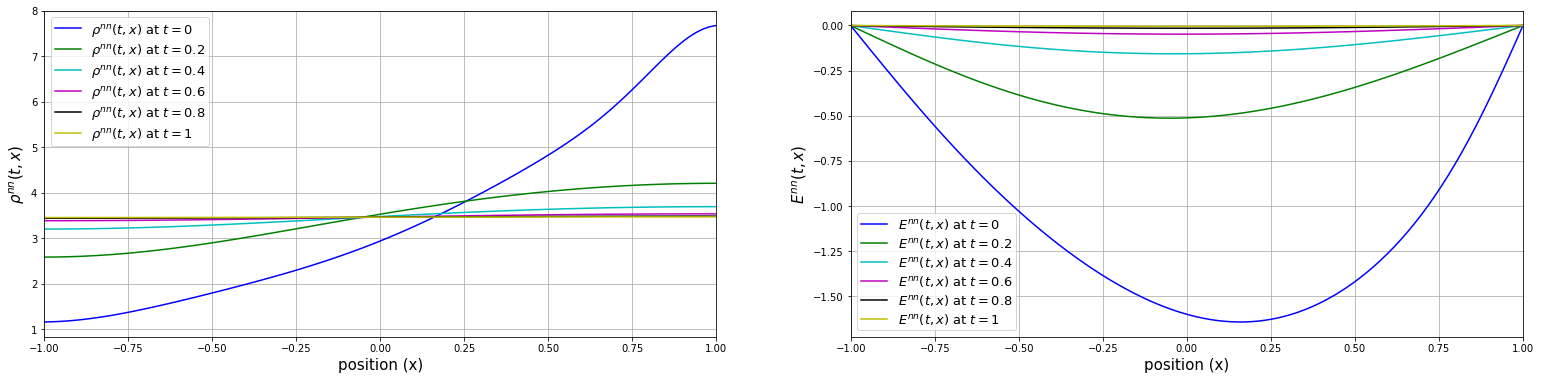}
  \caption{The pointwise values of $\rho^{nn}(t,x;m,w,b)$ and $E^{nn}(t,x;m,w,b)$ for each position $x$ as time $t$ varies. The values at each time $t=0,0.2,0.4,0.6,0.8,1$ are drawn in different colors as shown in the legend.}
  \label{fig:PNP_3}
\end{figure}

\section{Part IV. On the simulation results of the diffusion limit from the VPFP system to the PNP system}\label{sec:part4}
In this section, we provide the trend of the diffusion limit from the VPFP system to the PNP system using the simulation results of our Deep Neural Network approach. We consider the convergence of the VPFP solutions to the PNP solution, as summarized in Part I (Section \ref{sec:part1}). We expect that the neural network solutions of the VPFP system and the PNP system have the trend of diffusion limit as explained in the equations \eqref{converge_f} and \eqref{converge_E}. To observe the trend of the convergence, we compare the neural network solutions to the VPFP system with the Knudsen numbers $\varepsilon=1,0.5,0.2,0.1,0.05$ and the corresponding neural network solutions to the PNP system. The methods of how to train the neural network solutions to the VPFP system and the PNP system are precisely described in Section \ref{sec:methodology}. Also, the results of the numerical simulations are given in Part II (Section \ref{sec:part2}) for the VPFP system and Part III (Section \ref{sec:part3}) for the PNP system, respectively.

As we have introduced in Section \ref{subsec:grid_reuse} on the simulation methodology, we use the `Grid Reuse' method to capture the VPFP with the small Knudsen number $\varepsilon$. When the `Grid Reuse' strategy is not used, the neural network solutions to the VPFP system could not approximate well at the early part of the time grid (about 0.0$\sim$0.2 time grids) as the Knudsen number $\varepsilon$ is smaller. This means that the VPFP system with the small Knudsen number is hard to be approximated at the early time grid using Deep Learning. Therefore, the `Grid Reuse' method is essential to observe the diffusion limit from the neural network solution of the VPFP system to the neural network solution of the PNP system.

We define the total loss function \eqref{VPFP_loss_total} in the sense of the Mean Square Error (MSE). In this section, we use the Root Mean Square Error (RMSE) as the loss function for the VPFP system. These two cases show almost similar results, but we choose the RMSE loss function that offers better results. We use 50 reused grid points. The details of our Deep Learning algorithm are explained in Section \ref{subsec:dnn_architerture} and the summary of the Deep Learning Algorithm \ref{algorithm}.

\subsection{Neural Network simulations} In this section, we present the results of the numerical simulations for the diffusion limit from the VPFP system to the PNP system. We set the initial condtion \eqref{VPFP_initial} for the VPFP system and the initial condition \eqref{PNP_initial} for the PNP system. It is worth noting that we do not change the number of the grid points for the VPFP system with any Knudsen numbers, i.e., we anlayze the diffusion limit in the sense of the Asymptotic-Preserving (AP) scheme. Instead, we use the `Grid Reuse' method for all neural network solutions.

\begin{figure}[H]
  \includegraphics[width=0.9\textwidth, draft=false]{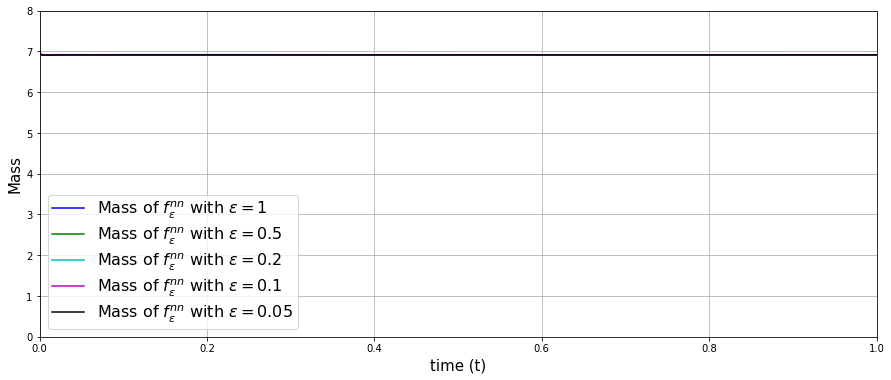}
  \caption{The time-asymptotic behaviors of the total mass of $f_\varepsilon^{nn}(t,x,v;m,w,b)$ ($L^1_{x,v}$ norm of $f_\varepsilon^{nn}$) as $\varepsilon$ varies. The values with each Knudsen number $\varepsilon$ are drawn in different colors as shown in the legend. It is notable that the total mass of the distribution $f_\varepsilon^{nn}$ is conserved over time although the Knudsen number $\varepsilon$ varies.} 
  \label{fig:diffusive_1}
\end{figure}

\begin{figure}[H]
  \includegraphics[width=0.9\textwidth, draft=false]{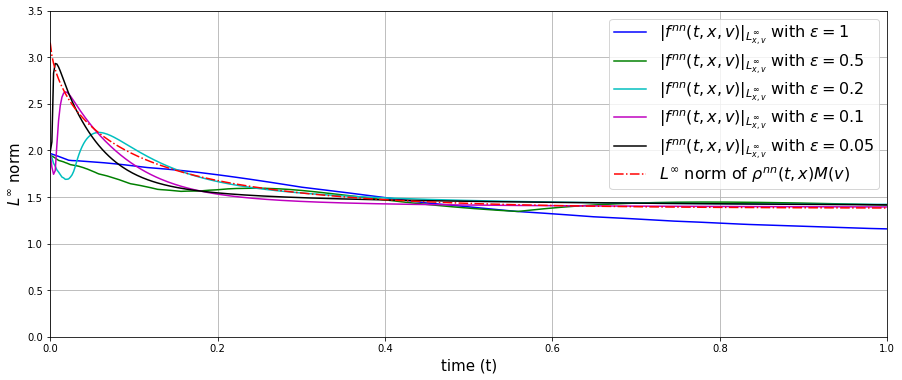}
  \caption{The time-asymptotic behavior of the $L^\infty_{x,v}$ norm of $f_\varepsilon^{nn}(t,x,v;m,w,b)$ and $\rho^{nn}(t,x;m,w,b)M(v)$ over time $t$ as the Knudsen number $\varepsilon$ varies. Each value is drawn in different colors as shown in the legend.}
  \label{fig:diffusive_2}
\end{figure}

Figure \ref{fig:diffusive_1} indicates the total mass $\int_{\Omega\times V}f_\varepsilon^{nn}dvdx$ of the VPFP system with the different Knudsen numbers $\varepsilon=1, 0.5, 0.2, 0.1$ and $0.05$ in different colors as shown in the legend. As shown in Figure \ref{fig:diffusive_1}, all five graphs overlap so that they appear as one graph. This is because that all the five cases conserve the total mass over time. This plot implies that the neural network solutions for all 5 cases well approximate the solution of the VPFP system.

Figure \ref{fig:diffusive_2} shows the $L^\infty$ norm of the solution $f_\varepsilon^{nn}(t,x,v;m,w,b)$ to the VPFP system with the different Knudsen numbers $\varepsilon=1, 0.5, 0.2, 0.1$ and $0.05$ in different colors as shown in the legend. Also, we plot the $L^\infty$ norm of the neural network solution $\rho^{nn}(t,x;m,w,b)M(v)$ via the red-dotted-line in Figure \ref{fig:diffusive_2}. We can observe that the $L^\infty$ norm of the solution $f_\varepsilon^{nn}(t,x,v;m,w,b)$ converges pointwisely to the $L^\infty$ norm of the $\rho^{nn}(t,x;m,w,b)M(v)$ as the Knudsen number $\varepsilon$ becomes close to zero. This gives more information than the theoretical result of convergence as explained in \eqref{converge_f}.

\begin{figure}[H]
  \includegraphics[width=0.9\textwidth, draft=false]{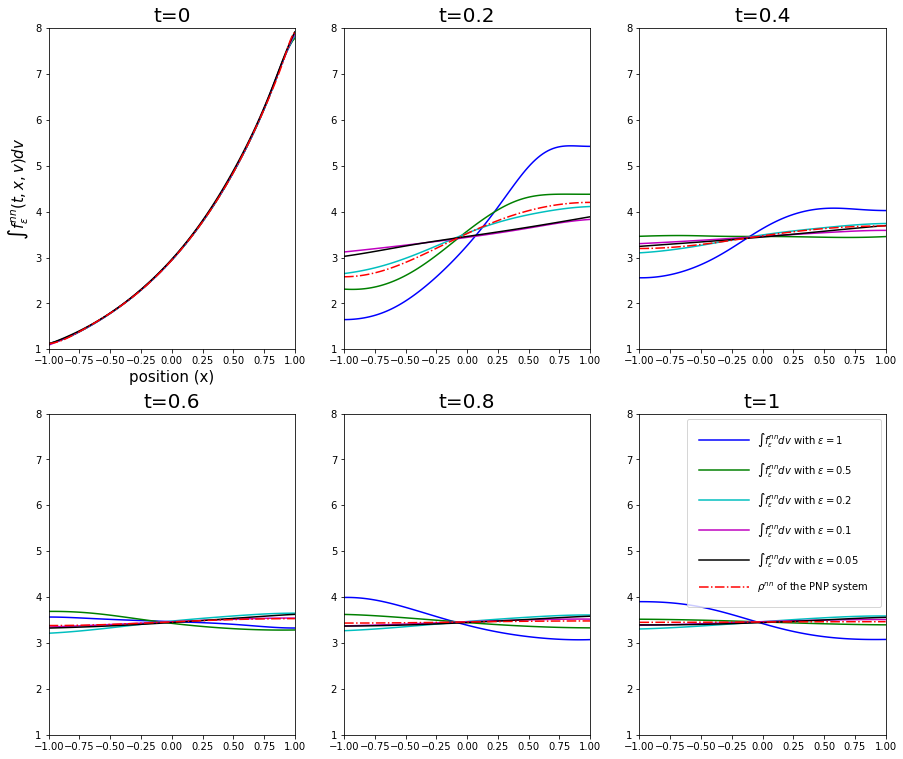}
  \caption{The pointwise values of $\int  f_\varepsilon^{nn}(t,x,v;m,w,b)dv$ to the VPFP system and $\rho^{nn}(t,x;m,w,b)$ to the PNP system for each position $x$ at time $t=0,0.2,0.4,0.6,0.8,1$ as $\varepsilon$ varies. The values with each Knudsen number $\varepsilon$ are drawn in different colors as shown in the legend.}
  \label{fig:diffusive_3}
\end{figure}

The graphs in Figure \ref{fig:diffusive_3} and in Figure \ref{fig:diffusive_4} show the pointwise values of the solutions as time $t$ varies at each $x$'s. Figure \ref{fig:diffusive_3} shows the pointwise values of $\int f_\varepsilon^{nn}(t,x,v;m,w,b)dv$ as the Knudsen number $\varepsilon$ varies in different colors as shown in the legend. We also plot the pointwise values of $\rho^{nn}(t,x;m,w,b)$ via the red-dotted lines. The first plot in Figure \ref{fig:diffusive_3} shows that the initial condition \eqref{VPFP_initial} for the neural network solution of the VPFP system is consistent with the initial condition \eqref{PNP_initial} for the neural network solution of the PNP system, since we set the initial conditions to satisfy the relation $\int_\R f_0(t,x,v)dv=\rho_0(t,x)$. It is remarkable that the neural network solutions to the VPFP system with the different Knudsen numbers well approximate the initial condtion and the same for the solution to the PNP system. Also, we expect that the integration of neural network solution $\int f_\varepsilon(t,x,v)dv$ converges to the density $\rho(t,x;m,w,b)$ which is consistent to the convergence of $f_\varepsilon$ to $\rho(t,x)M(v)$ as explained in \eqref{converge_f}. As shown in the six plots in Figure \ref{fig:diffusive_3}, the pointwise values of the $\int f_\varepsilon(t,x,v)dv$ to the VPFP system converge to the solution $\rho(t,x;m,w,b)$ for all time $t\in[0,1]$ as the small Knudsen number $\varepsilon$ becomes small. It is consistent to the theoretical supports in \eqref{converge_f}.

\begin{figure}[H]
  \includegraphics[width=0.9\textwidth, draft=false]{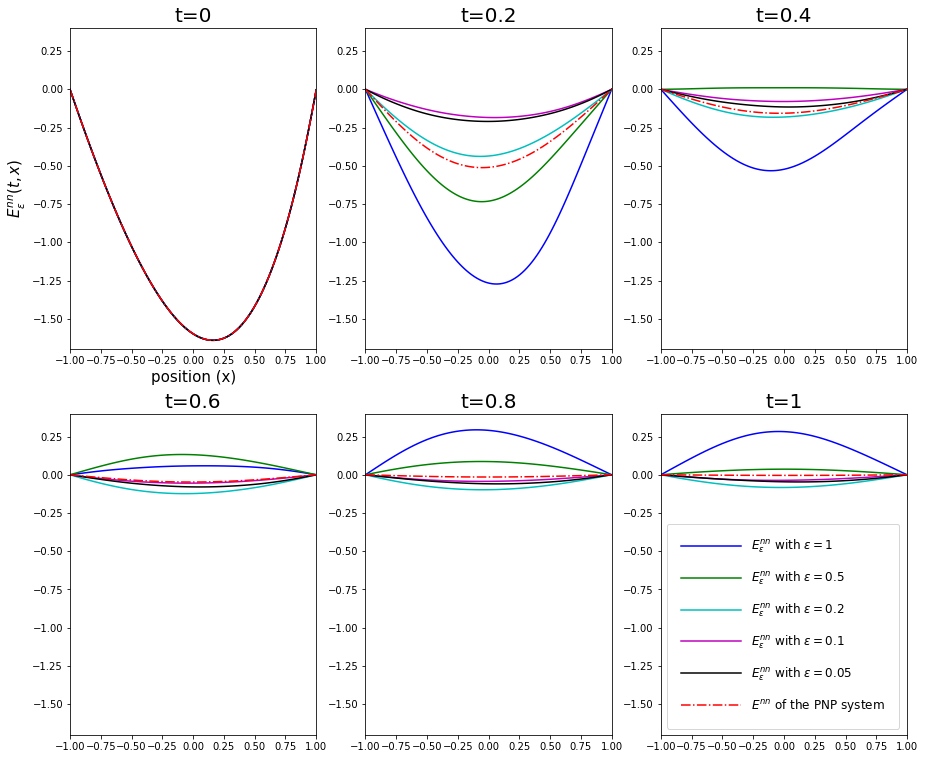}
  \caption{The pointwise values of $E^{nn}_\varepsilon(t,x;m,w,b)$ to the VPFP system and $E^{nn}(t,x;m,w,b)$ to the PNP system for each position $x$ at time $t=0,0.2,0.4,0.6,0.8,1$ as $\varepsilon$ varies. The values with each Knudsen number $\varepsilon$ are drawn in different colors as shown in the legend.}
  \label{fig:diffusive_4}
\end{figure}

Figure \ref{fig:diffusive_4} shows the pointwise values of the electric force $E_\varepsilon^{nn}(t,x;m,w,b)$ with the differnt Knudsen numbers $\varepsilon=1,0.5,0.2,0.1$ and $\varepsilon=0.05$ in different colors as time $t$ varies at each $x$'s. Also, we plot the electric force of the PNP system $E^{nn}(t,x;m,w,b)$ in the red-dotted lines. We remark that the first plot in Figure \ref{fig:diffusive_4} shows the same pointwise values of the electric force to the VPFP system with different Knudsen numbers and to the PNP system. It means that the initial conditions of the neural network solution of the electric force $E^{nn}$ to the VPFP system and the PNP system are well approximated. Also, we observe that the six plots in Figure \ref{fig:diffusive_4} show the solution $E^{nn}_\varepsilon(t,x;m,w,b)$ of the VPFP system converges to the solution $E^{nn}(t,x;m,w,b)$ of the PNP system as the Knudsen number $\varepsilon$ goes to zero. It agrees with the theoretical result as explained in \eqref{converge_E}.

\begin{figure}[H]
  \includegraphics[width=0.8\textwidth, draft=false]{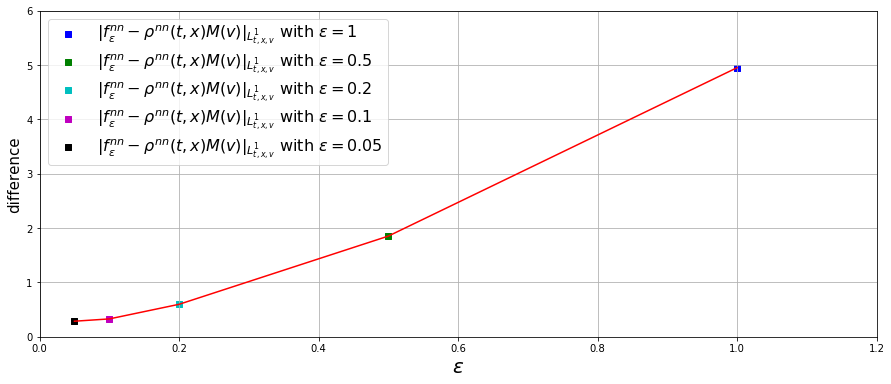}
  \caption{The values of $L_{t,x,v}^1$ norm of the difference between $f^{nn}_\varepsilon(t,x,v;m,w,b)$ and $\rho^{nn}(t,x;m,w,b)M(v)$ as $\varepsilon$ varies.}
  \label{fig:diffusive_5}
\end{figure}

Finally, Figure \ref{fig:diffusive_5} shows that the $L_{t,x,v}^1$ norm of the difference between the distribution $f^{nn}_\varepsilon(t,x,v;m,w,b)$ and the solution $\rho^{nn}(t,x;m,w,b)M(v)$ as $\varepsilon$ varies. We note that Figure \ref{fig:diffusive_5} shows the convergence of $f_\varepsilon$ to $\rho(t,x)M(v)$ more quantitatively than the plots in the previous figures. As we expected in $\eqref{converge_f}$, the graph shows that the $L_{t,x,v}^1$ norm of the difference between $f^{nn}_\varepsilon$ and $\rho^{nn}M$ becomes smaller as the Knudsen number $\varepsilon$ tends to zero.

\section{Conclusion}\label{sec:conclusion}
In this paper, we establish the commutation of the diagram of diffusion limit in Figure \ref{fig:diagram}. This also implies the reduction of the VPFP system with the specular boundary condition to the PNP system with the no-flux boundary condition as the Knudsen number $\varepsilon$ tends to zero. To this end, we have introduced the Deep Neural Network (DNN) solutions to the VPFP system and the PNP system using the Deep Learning algorithm. We use the two neural networks to approximate the VPFP system and the PNP system, coupled with the Poisson equation. Also, we propose appropriate loss functions for training, including the loss function for the initial conditions and the boundary conditions to each system: the VPFP system in Part II and the PNP system in Part III. We also provide the theoretical supports on which the approximated DNN solutions converge to analytic solutions of each system as the proposed total loss function tends to zero. We also provide the numerical simulations on the DNN solutions of each system, which support the theoretical predictions on the asymptotic behaviors of each system. These include the steady-states for the solutions and the physical quantities such as the total mass, the kinetic energy, the entropy, the electric energy, and the free energy.

Finally, using these DNN solutions of the two systems, we observe the trend of the diffusion limit in Part IV. We analyze our DNN solutions based on the theory shown in Part I. We use the newly devised technique `Grid Reuse' method adapted to the Deep Learning algorithm. This technique makes it possible to approximate the solution of the VPFP system with the Knudsen number $\varepsilon$ in the range between 0.05 and 1. An improved method to approximate the VPFP system will be needed to make it work with a smaller value of the Knudsen number $\varepsilon$ than 0.05, and this is one of the future works in the direction.

One of the difficulties that the Deep Learning approach experiences as a PDE solver is on its rate of convergence and stability. Compared to the traditional numerical schemes which have a great amount of well-known studies on each method's performance, the Deep Learning method still has some difficulties in dealing with such things due to the optimization issues. However, it is also true that the Deep Learning is a new approach with many advantages as the Deep Learning method is a mesh-free method. The Deep Learning algorithm that we introduce in this paper does not require itself to have the mesh generation and instead it re-samples the grid points for each domain in every epoch. We expect that our work can be applied to arbitrary domains in higher dimensional kinetic equations.

\section*{Acknowledgement} 
J. Y. Lee and H. J. Hwang were supported by the Basic Science
Research Program through the National Research Foundation of Korea \newline(NRF- 2017R1E1A1A03070105 and NRF-2019R1A5A1028324). 

J. W. Jang was supported by the CRC 1060 \textit{The Mathematics of Emergent Effects} of the
University of Bonn funded through the German Science Foundation (DFG).

\bibliographystyle{amsplaindoi} 
\bibliography{bibliography}

\providecommand{\bysame}{\leavevmode\hbox to3em{\hrulefill}\thinspace}
\providecommand{\MR}{\relax\ifhmode\unskip\space\fi MR }
\providecommand{\MRhref}[2]{%
  \href{http://www.ams.org/mathscinet-getitem?mr=#1}{#2}
}
\providecommand{\href}[2]{#2}
\begin{thebibliography}{10}

\bibitem{allen1994computational}
EJ~Allen and HD~Victory~Jr, \emph{A computational investigation of the random
  particle method for numerical solution of the kinetic
  {V}lasov-{P}oisson-{F}okker-{P}lanck equations}, Physica A: Statistical
  Mechanics and its Applications \textbf{209} (1994), no.~3-4, 318--346.

\bibitem{anitescu2019artificial}
Cosmin Anitescu, Elena Atroshchenko, Naif Alajlan, and Timon Rabczuk,
  \emph{Artificial neural network methods for the solution of second order
  boundary value problems}, Computers, Materials and Continua \textbf{59}
  (2019), no.~1, 345--359.

\bibitem{MR1848592}
A.~Arnold, J.~A. Carrillo, I.~Gamba, and C.-W. Shu,
  \href{https://doi.org/10.1081/TT-100105365}{\emph{Low and high field scaling
  limits for the {V}lasov- and {W}igner-{P}oisson-{F}okker-{P}lanck systems}},
  vol.~30, 2001, The Sixteenth International Conference on Transport Theory,
  Part I (Atlanta, GA, 1999), pp.~121--153. \MR{1848592}

\bibitem{MR1770444}
Anton Arnold, Peter Markowich, and Giuseppe Toscani,
  \href{https://doi.org/10.1080/00411450008205893}{\emph{On large time
  asymptotics for drift-diffusion-{P}oisson systems}}, Proceedings of the
  {F}ifth {I}nternational {W}orkshop on {M}athematical {A}spects of {F}luid and
  {P}lasma {D}ynamics ({M}aui, {HI}, 1998), vol.~29, 2000, pp.~571--581.
  \MR{1770444}

\bibitem{MR3800512}
At\i l\i m~G\"{u}ne\c{s} Baydin, Barak~A. Pearlmutter, Alexey~Andreyevich
  Radul, and Jeffrey~Mark Siskind, \emph{Automatic differentiation in machine
  learning: a survey}, J. Mach. Learn. Res. \textbf{18} (2017), Paper No. 153,
  43. \MR{3800512}

\bibitem{MR892257}
Yu.~A. Berezin, V.~N. Khudick, and M.~S. Pekker,
  \href{https://doi.org/10.1016/0021-9991(87)90160-4}{\emph{Conservative
  finite-difference schemes for the {F}okker-{P}lanck equation not violating
  the law of an increasing entropy}}, J. Comput. Phys. \textbf{69} (1987),
  no.~1, 163--174. \MR{892257}

\bibitem{biler2000long}
Piotr Biler and J.~Dolbeault, \emph{Long time behavior of solutions to
  nernst-planck and debye-h{\"u}ckel drift-diffusion systems}, Annales Henri
  Poincar{\'e}, vol.~1, Springer, 2000, pp.~461--472.

\bibitem{MR1305769}
Piotr Biler, Waldemar Hebisch, and Tadeusz Nadzieja,
  \href{https://doi.org/10.1016/0362-546X(94)90101-5}{\emph{The {D}ebye system:
  existence and large time behavior of solutions}}, Nonlinear Anal. \textbf{23}
  (1994), no.~9, 1189--1209. \MR{1305769}

\bibitem{MR1470927}
L.~L. Bonilla, J.~A. Carrillo, and J.~Soler,
  \href{https://doi.org/10.1137/S0036139995291544}{\emph{Asymptotic behavior of
  an initial-boundary value problem for the
  {V}lasov-{P}oisson-{F}okker-{P}lanck system}}, SIAM J. Appl. Math.
  \textbf{57} (1997), no.~5, 1343--1372. \MR{1470927}

\bibitem{MR1859832}
L.~L. Bonilla and J.~Soler,
  \href{https://doi.org/10.1142/S0218202501001410}{\emph{High-field limit of
  the {V}lasov-{P}oisson-{F}okker-{P}lanck system: a comparison of differential
  perturbation methods}}, Math. Models Methods Appl. Sci. \textbf{11} (2001),
  no.~8, 1457--1468. \MR{1859832}

\bibitem{MR1200643}
F.~Bouchut, \href{https://doi.org/10.1006/jfan.1993.1011}{\emph{Existence and
  uniqueness of a global smooth solution for the
  {V}lasov-{P}oisson-{F}okker-{P}lanck system in three dimensions}}, J. Funct.
  Anal. \textbf{111} (1993), no.~1, 239--258. \MR{1200643}

\bibitem{MR1306570}
F.~Bouchut and J.~Dolbeault, \emph{On long time asymptotics of the
  {V}lasov-{F}okker-{P}lanck equation and of the
  {V}lasov-{P}oisson-{F}okker-{P}lanck system with {C}oulombic and {N}ewtonian
  potentials}, Differential Integral Equations \textbf{8} (1995), no.~3,
  487--514. \MR{1306570}

\bibitem{MR1640174}
C.~Buet and S.~Cordier,
  \href{https://doi.org/10.1006/jcph.1998.6015}{\emph{Conservative and entropy
  decaying numerical scheme for the isotropic {F}okker-{P}lanck-{L}andau
  equation}}, J. Comput. Phys. \textbf{145} (1998), no.~1, 228--245.
  \MR{1640174}

\bibitem{MR1688993}
\bysame, \href{https://doi.org/10.1137/S0036142997322102}{\emph{Numerical
  analysis of conservative and entropy schemes for the
  {F}okker-{P}lanck-{L}andau equation}}, SIAM J. Numer. Anal. \textbf{36}
  (1999), no.~3, 953--973. \MR{1688993}

\bibitem{MR1447091}
C.~Buet, S.~Cordier, P.~Degond, and M.~Lemou,
  \href{https://doi.org/10.1006/jcph.1997.5669}{\emph{Fast algorithms for
  numerical, conservative, and entropy approximations of the
  {F}okker-{P}lanck-{L}andau equation}}, J. Comput. Phys. \textbf{133} (1997),
  no.~2, 310--322. \MR{1447091}

\bibitem{MR1778187}
Luis Caffarelli, J.~Dolbeault, Peter~A. Markowich, and Christian Schmeiser,
  \href{https://doi.org/10.4171/IFB/23}{\emph{On {M}axwellian equilibria of
  insulated semiconductors}}, Interfaces Free Bound. \textbf{2} (2000), no.~3,
  331--339. \MR{1778187}

\bibitem{MR1634851}
Jos\'{e}~A. Carrillo,
  \href{https://doi.org/10.1002/(SICI)1099-1476(19980710)21:10<907::AID-MMA977>3.3.CO;2-N}{\emph{Global
  weak solutions for the initial-boundary-value problems to the
  {V}lasov-{P}oisson-{F}okker-{P}lanck system}}, Math. Methods Appl. Sci.
  \textbf{21} (1998), no.~10, 907--938. \MR{1634851}

\bibitem{MR1414375}
Jos\'{e}~A. Carrillo, Juan Soler, and Juan~Luis V\'{a}zquez,
  \href{https://doi.org/10.1006/jfan.1996.0123}{\emph{Asymptotic behaviour and
  self-similarity for the three-dimensional
  {V}lasov-{P}oisson-{F}okker-{P}lanck system}}, J. Funct. Anal. \textbf{141}
  (1996), no.~1, 99--132. \MR{1414375}

\bibitem{MR1739113}
L.~Chac\'{o}n, D.~C. Barnes, D.~A. Knoll, and G.~H. Miley,
  \href{https://doi.org/10.1006/jcph.1999.6394}{\emph{An implicit
  energy-conservative 2{D} {F}okker-{P}lanck algorithm. {I}. {D}ifference
  scheme}}, J. Comput. Phys. \textbf{157} (2000), no.~2, 618--653. \MR{1739113}

\bibitem{MR2786393}
Nicolas Crouseilles and Mohammed Lemou,
  \href{https://doi.org/10.3934/krm.2011.4.441}{\emph{An asymptotic preserving
  scheme based on a micro-macro decomposition for collisional {V}lasov
  equations: diffusion and high-field scaling limits}}, Kinet. Relat. Models
  \textbf{4} (2011), no.~2, 441--477. \MR{2786393}

\bibitem{MR1015670}
G.~Cybenko, \href{https://doi.org/10.1007/BF02551274}{\emph{Approximation by
  superpositions of a sigmoidal function}}, Math. Control Signals Systems
  \textbf{2} (1989), no.~4, 303--314. \MR{1015670}

\bibitem{MR3220424}
P.~Degond, \emph{Asymptotic-preserving schemes for fluid models of plasmas},
  Numerical models for fusion, Panor. Synth\`eses, vol. 39/40, Soc. Math.
  France, Paris, 2013, pp.~1--90. \MR{3220424}

\bibitem{MR1283340}
P.~Degond and Brigitte Lucquin-Desreux,
  \href{https://doi.org/10.1007/s002110050059}{\emph{An entropy scheme for the
  {F}okker-{P}lanck collision operator of plasma kinetic theory}}, Numer. Math.
  \textbf{68} (1994), no.~2, 239--262. \MR{1283340}

\bibitem{MR2861709}
G.~Dimarco and L.~Pareschi,
  \href{https://doi.org/10.1137/100811052}{\emph{Exponential {R}unge-{K}utta
  methods for stiff kinetic equations}}, SIAM J. Numer. Anal. \textbf{49}
  (2011), no.~5, 2057--2077. \MR{2861709}

\bibitem{MR3202241}
\bysame, \href{https://doi.org/10.1017/S0962492914000063}{\emph{Numerical
  methods for kinetic equations}}, Acta Numer. \textbf{23} (2014), 369--520.
  \MR{3202241}

\bibitem{MR1115290}
J.~Dolbeault, \href{https://doi.org/10.1142/S0218202591000113}{\emph{Stationary
  states in plasma physics: {M}axwellian solutions of the {V}lasov-{P}oisson
  system}}, Math. Models Methods Appl. Sci. \textbf{1} (1991), no.~2, 183--208.
  \MR{1115290}

\bibitem{MR1677677}
\bysame, \href{https://doi.org/10.1016/S0021-7824(01)80006-4}{\emph{Free energy
  and solutions of the {V}lasov-{P}oisson-{F}okker-{P}lanck system: external
  potential and confinement (large time behavior and steady states)}}, J. Math.
  Pures Appl. (9) \textbf{78} (1999), no.~2, 121--157. \MR{1677677}

\bibitem{MR2016992}
Sever~Silvestru Dragomir, \emph{Some {G}r\"onwall type inequalities and
  applications}, Nova Science Publishers, Inc., Hauppauge, NY, 2003.
  \MR{2016992}

\bibitem{MR897264}
K.~Dressler, \href{https://doi.org/10.1002/mma.1670090113}{\emph{Stationary
  solutions of the {V}lasov-{F}okker-{P}lanck equation}}, Math. Methods Appl.
  Sci. \textbf{9} (1987), no.~2, 169--176. \MR{897264}

\bibitem{MR2664460}
Najoua El~Ghani and Nader Masmoudi,
  \href{http://projecteuclid.org/euclid.cms/1274816891}{\emph{Diffusion limit
  of the {V}lasov-{P}oisson-{F}okker-{P}lanck system}}, Commun. Math. Sci.
  \textbf{8} (2010), no.~2, 463--479. \MR{2664460}

\bibitem{MR2674294}
Francis Filbet and Shi Jin,
  \href{https://doi.org/10.1016/j.jcp.2010.06.017}{\emph{A class of
  asymptotic-preserving schemes for kinetic equations and related problems with
  stiff sources}}, J. Comput. Phys. \textbf{229} (2010), no.~20, 7625--7648.
  \MR{2674294}

\bibitem{MR1906573}
Francis Filbet and Lorenzo Pareschi,
  \href{https://doi.org/10.1006/jcph.2002.7010}{\emph{A numerical method for
  the accurate solution of the {F}okker-{P}lanck-{L}andau equation in the
  nonhomogeneous case}}, J. Comput. Phys. \textbf{179} (2002), no.~1, 1--26.
  \MR{1906573}

\bibitem{flavell2014conservative}
Allen Flavell, Michael Machen, Bob Eisenberg, Julienne Kabre, Chun Liu, and
  Xiaofan Li, \emph{A conservative finite difference scheme for
  {P}oisson--{N}ernst--{P}lanck equations}, Journal of Computational
  Electronics \textbf{13} (2014), no.~1, 235--249.

\bibitem{funahashi1989approximate}
Ken-Ichi Funahashi, \emph{On the approximate realization of continuous mappings
  by neural networks}, Neural Networks \textbf{2} (1989), no.~3, 183--192.

\bibitem{MR826656}
H.~Gajewski and K.~Gr\"{o}ger,
  \href{https://doi.org/10.1016/0022-247X(86)90330-6}{\emph{On the basic
  equations for carrier transport in semiconductors}}, J. Math. Anal. Appl.
  \textbf{113} (1986), no.~1, 12--35. \MR{826656}

\bibitem{MR2139941}
T.~Goudon, \href{https://doi.org/10.1142/S021820250500056X}{\emph{Hydrodynamic
  limit for the {V}lasov-{P}oisson-{F}okker-{P}lanck system: analysis of the
  two-dimensional case}}, Math. Models Methods Appl. Sci. \textbf{15} (2005),
  no.~5, 737--752. \MR{2139941}

\bibitem{MR2142374}
T.~Goudon, J.~Nieto, F.~Poupaud, and J.~Soler,
  \href{https://doi.org/10.1016/j.jde.2004.09.008}{\emph{Multidimensional
  high-field limit of the electrostatic {V}lasov-{P}oisson-{F}okker-{P}lanck
  system}}, J. Differential Equations \textbf{213} (2005), no.~2, 418--442.
  \MR{2142374}

\bibitem{MR4076068}
Yan Guo, Hyung~Ju Hwang, Jin~Woo Jang, and Zhimeng Ouyang,
  \href{https://doi.org/10.1007/s00205-020-01496-5}{\emph{The {L}andau equation
  with the specular reflection boundary condition}}, Arch. Ration. Mech. Anal.
  \textbf{236} (2020), no.~3, 1389--1454. \MR{4076068}

\bibitem{MR4030583}
Jiequn Han, Chao Ma, Zheng Ma, and Weinan E,
  \href{https://doi.org/10.1073/pnas.1909854116}{\emph{Uniformly accurate
  machine learning-based hydrodynamic models for kinetic equations}}, Proc.
  Natl. Acad. Sci. USA \textbf{116} (2019), no.~44, 21983--21991. \MR{4030583}

\bibitem{MR1377255}
Karl~J. Havlak and Harold~Dean Victory, Jr.,
  \href{https://doi.org/10.1137/0733016}{\emph{The numerical analysis of random
  particle methods applied to {V}lasov-{P}oisson-{F}okker-{P}lanck kinetic
  equations}}, SIAM J. Numer. Anal. \textbf{33} (1996), no.~1, 291--317.
  \MR{1377255}

\bibitem{MR3479195}
Maxime Herda, \href{https://doi.org/10.1016/j.jde.2016.02.005}{\emph{On
  massless electron limit for a multispecies kinetic system with external
  magnetic field}}, J. Differential Equations \textbf{260} (2016), no.~11,
  7861--7891. \MR{3479195}

\bibitem{hornik1991approximation}
Kurt Hornik, \emph{Approximation capabilities of multilayer feedforward
  networks}, Neural Networks \textbf{4} (1991), no.~2, 251--257.

\bibitem{hornik1989multilayer}
Kurt Hornik, Maxwell Stinchcombe, and Halbert White, \emph{Multilayer
  feedforward networks are universal approximators}, Neural Networks \textbf{2}
  (1989), no.~5, 359--366.

\bibitem{MR4116803}
Hyung~Ju Hwang, Jin~Woo Jang, Hyeontae Jo, and Jae~Yong Lee,
  \href{https://doi.org/10.1016/j.jcp.2020.109665}{\emph{Trend to equilibrium
  for the kinetic {F}okker-{P}lanck equation via the neural network approach}},
  J. Comput. Phys. \textbf{419} (2020), 109665. \MR{4116803}

\bibitem{MR3007749}
Hyung~Ju Hwang and Juhi Jang,
  \href{https://doi.org/10.3934/dcdsb.2013.18.681}{\emph{On the
  {V}lasov-{P}oisson-{F}okker-{P}lanck equation near {M}axwellian}}, Discrete
  Contin. Dyn. Syst. Ser. B \textbf{18} (2013), no.~3, 681--691. \MR{3007749}

\bibitem{MR3436235}
Hyung~Ju Hwang, Juhi Jang, and Jaewoo Jung,
  \href{https://doi.org/10.1512/iumj.2015.64.5679}{\emph{On the kinetic
  {F}okker-{P}lanck equation in a half-space with absorbing barriers}}, Indiana
  Univ. Math. J. \textbf{64} (2015), no.~6, 1767--1804. \MR{3436235}

\bibitem{MR3788197}
\bysame, \href{https://doi.org/10.1137/16M1109928}{\emph{The {F}okker-{P}lanck
  equation with absorbing boundary conditions in bounded domains}}, SIAM J.
  Math. Anal. \textbf{50} (2018), no.~2, 2194--2232. \MR{3788197}

\bibitem{MR3237885}
Hyung~Ju Hwang, Juhi Jang, and Juan J.~L. Vel\'{a}zquez,
  \href{https://doi.org/10.1007/s00205-014-0758-5}{\emph{The {F}okker-{P}lanck
  equation with absorbing boundary conditions}}, Arch. Ration. Mech. Anal.
  \textbf{214} (2014), no.~1, 183--233. \MR{3237885}

\bibitem{MR3902464}
\bysame, \href{https://doi.org/10.1007/s00205-018-1299-0}{\emph{Nonuniqueness
  for the kinetic {F}okker-{P}lanck equation with inelastic boundary
  conditions}}, Arch. Ration. Mech. Anal. \textbf{231} (2019), no.~3,
  1309--1400. \MR{3902464}

\bibitem{MR3897919}
\bysame, \href{https://doi.org/10.1090/qam/1507}{\emph{On the structure of the
  singular set for the kinetic {F}okker-{P}lanck equations in domains with
  boundaries}}, Quart. Appl. Math. \textbf{77} (2019), no.~1, 19--70.
  \MR{3897919}

\bibitem{MR3906998}
Hyung~Ju Hwang and Jinoh Kim,
  \href{https://doi.org/10.1016/j.spa.2018.02.016}{\emph{The
  {V}lasov-{P}oisson-{F}okker-{P}lanck equation in an interval with kinetic
  absorbing boundary conditions}}, Stochastic Process. Appl. \textbf{129}
  (2019), no.~1, 240--282. \MR{3906998}

\bibitem{MR3614499}
Hyung~Ju Hwang and Du~Phan, \href{https://doi.org/10.1090/qam/1462}{\emph{On
  the {F}okker-{P}lanck equations with inflow boundary conditions}}, Quart.
  Appl. Math. \textbf{75} (2017), no.~2, 287--308. \MR{3614499}

\bibitem{MR2815679}
Yunkyong Hyon, Bob Eisenberg, and Chun Liu,
  \href{http://projecteuclid.org/euclid.cms/1305034462}{\emph{A mathematical
  model for the hard sphere repulsion in ionic solutions}}, Commun. Math. Sci.
  \textbf{9} (2011), no.~2, 459--475. \MR{2815679}

\bibitem{MR1718639}
Shi Jin, \href{https://doi.org/10.1137/S1064827598334599}{\emph{Efficient
  asymptotic-preserving ({AP}) schemes for some multiscale kinetic equations}},
  SIAM J. Sci. Comput. \textbf{21} (1999), no.~2, 441--454. \MR{1718639}

\bibitem{MR2964096}
\bysame, \emph{Asymptotic preserving ({AP}) schemes for multiscale kinetic and
  hyperbolic equations: a review}, Riv. Math. Univ. Parma (N.S.) \textbf{3}
  (2012), no.~2, 177--216. \MR{2964096}

\bibitem{MR2931501}
Shi Jin and Li~Wang,
  \href{https://doi.org/10.1016/S0252-9602(11)60395-0}{\emph{An asymptotic
  preserving scheme for the {V}lasov-{P}oisson-{F}okker-{P}lanck system in the
  high field regime}}, Acta Math. Sci. Ser. B (Engl. Ed.) \textbf{31} (2011),
  no.~6, [November 2010 on cover], 2219--2232. \MR{2931501}

\bibitem{MR4112187}
Hyeontae Jo, Hwijae Son, Hyung~Ju Hwang, and Eun~Heui Kim,
  \href{https://doi.org/10.3934/nhm.2020011}{\emph{Deep neural network approach
  to forward-inverse problems}}, Netw. Heterog. Media \textbf{15} (2020),
  no.~2, 247--259. \MR{4112187}

\bibitem{lagaris1998artificial}
Isaac~E Lagaris, Aristidis Likas, and Dimitrios~I Fotiadis, \emph{Artificial
  neural networks for solving ordinary and partial differential equations},
  IEEE Transactions on Neural Networks \textbf{9} (1998), no.~5, 987--1000.

\bibitem{lagaris2000neural}
Isaac~E Lagaris, Aristidis Likas, and Dimitris~G Papageorgiou,
  \emph{Neural-network methods for boundary value problems with irregular
  boundaries}, IEEE Transactions on Neural Networks \textbf{11} (2000), no.~5,
  1041--1049.

\bibitem{li1996simultaneous}
Xin Li, \emph{Simultaneous approximations of multivariate functions and their
  derivatives by neural networks with one hidden layer}, Neurocomputing
  \textbf{12} (1996), no.~4, 327--343.

\bibitem{MR3192448}
Hailiang Liu and Zhongming Wang,
  \href{https://doi.org/10.1016/j.jcp.2014.02.036}{\emph{A free energy
  satisfying finite difference method for {P}oisson-{N}ernst-{P}lanck
  equations}}, J. Comput. Phys. \textbf{268} (2014), 363--376. \MR{3192448}

\bibitem{MR3571893}
\bysame, \href{https://doi.org/10.1016/j.jcp.2016.10.008}{\emph{A free energy
  satisfying discontinuous {G}alerkin method for one-dimensional
  {P}oisson-{N}ernst-{P}lanck systems}}, J. Comput. Phys. \textbf{328} (2017),
  413--437. \MR{3571893}

\bibitem{lu2019deepxde}
Lu~Lu, Xuhui Meng, Zhiping Mao, and George~E Karniadakis, \emph{Deepxde: A deep
  learning library for solving differential equations}, arXiv preprint
  arXiv:1907.04502 (2019).

\bibitem{MR10388}
Warren~S. McCulloch and Walter Pitts,
  \href{https://doi.org/10.1007/bf02478259}{\emph{A logical calculus of the
  ideas immanent in nervous activity}}, Bull. Math. Biophys. \textbf{5} (1943),
  115--133. \MR{10388}

\bibitem{MR1834113}
Juan Nieto, Fr\'{e}d\'{e}ric Poupaud, and J.~Soler,
  \href{https://doi.org/10.1007/s002050100139}{\emph{High-field limit for the
  {V}lasov-{P}oisson-{F}okker-{P}lanck system}}, Arch. Ration. Mech. Anal.
  \textbf{158} (2001), no.~1, 29--59. \MR{1834113}

\bibitem{pareschi2011efficient}
L~Pareschi and G~Russo, \emph{Efficient asymptotic preserving deterministic
  methods for the boltzmann equation, avt-194 rto avt/vki, models and
  computational methods for rarefied flows, lecture series held at the von
  karman institute, rhode st}, Genese, Belgium (2011), 24--28.

\bibitem{MR1795398}
L.~Pareschi, G.~Russo, and G.~Toscani,
  \href{https://doi.org/10.1006/jcph.2000.6612}{\emph{Fast spectral methods for
  the {F}okker-{P}lanck-{L}andau collision operator}}, J. Comput. Phys.
  \textbf{165} (2000), no.~1, 216--236. \MR{1795398}

\bibitem{paszke2017automatic}
Adam Paszke, Sam Gross, Soumith Chintala, Gregory Chanan, Edward Yang, Zachary
  DeVito, Zeming Lin, Alban Desmaison, Luca Antiga, and Adam Lerer,
  \emph{Automatic differentiation in pytorch},  (2017).

\bibitem{MR1127004}
F.~Poupaud, \emph{Diffusion approximation of the linear semiconductor
  {B}oltzmann equation: analysis of boundary layers}, Asymptotic Anal.
  \textbf{4} (1991), no.~4, 293--317. \MR{1127004}

\bibitem{MR1780148}
F.~Poupaud and J.~Soler,
  \href{https://doi.org/10.1142/S0218202500000525}{\emph{Parabolic limit and
  stability of the {V}lasov-{F}okker-{P}lanck system}}, Math. Models Methods
  Appl. Sci. \textbf{10} (2000), no.~7, 1027--1045. \MR{1780148}

\bibitem{MR3881695}
M.~Raissi, P.~Perdikaris, and G.~E. Karniadakis,
  \href{https://doi.org/10.1016/j.jcp.2018.10.045}{\emph{Physics-informed
  neural networks: a deep learning framework for solving forward and inverse
  problems involving nonlinear partial differential equations}}, J. Comput.
  Phys. \textbf{378} (2019), 686--707. \MR{3881695}

\bibitem{MR1178396}
Gerhard Rein and J\"{u}rgen Weckler,
  \href{https://doi.org/10.1016/0022-0396(92)90135-A}{\emph{Generic global
  classical solutions of the {V}lasov-{F}okker-{P}lanck-{P}oisson system in
  three dimensions}}, J. Differential Equations \textbf{99} (1992), no.~1,
  59--77. \MR{1178396}

\bibitem{MR1619879}
Jack Schaeffer,
  \href{https://doi.org/10.1137/S0036142996302554}{\emph{Convergence of a
  difference scheme for the {V}lasov-{P}oisson-{F}okker-{P}lanck system in one
  dimension}}, SIAM J. Numer. Anal. \textbf{35} (1998), no.~3, 1149--1175.
  \MR{1619879}

\bibitem{MR3874585}
Justin Sirignano and Konstantinos Spiliopoulos,
  \href{https://doi.org/10.1016/j.jcp.2018.08.029}{\emph{D{GM}: a deep learning
  algorithm for solving partial differential equations}}, J. Comput. Phys.
  \textbf{375} (2018), 1339--1364. \MR{3874585}

\bibitem{sokalski2003numerical}
Tomasz Sokalski, Peter Lingenfelter, and Andrzej Lewenstam, \emph{Numerical
  solution of the coupled {N}ernst- {P}lanck and {P}oisson equations for liquid
  junction and ion selective membrane potentials}, The Journal of Physical
  Chemistry B \textbf{107} (2003), no.~11, 2443--2452.

\bibitem{MR1726024}
J.~Soler, \href{https://doi.org/10.1016/S0362-546X(97)00239-3}{\emph{Asymptotic
  behaviour for the {V}lasov-{P}oisson-{F}oker-{P}lanck system}}, Proceedings
  of the {S}econd {W}orld {C}ongress of {N}onlinear {A}nalysts, {P}art 8
  ({A}thens, 1996), vol.~30, 1997, pp.~5217--5228. \MR{1726024}

\bibitem{MR1052014}
Harold~Dean Victory, Jr. and Brian~P. O'Dwyer,
  \href{https://doi.org/10.1512/iumj.1990.39.39009}{\emph{On classical
  solutions of {V}lasov-{P}oisson {F}okker-{P}lanck systems}}, Indiana Univ.
  Math. J. \textbf{39} (1990), no.~1, 105--156. \MR{1052014}

\bibitem{MR3023368}
Guo-Wei Wei, Qiong Zheng, Zhan Chen, and Kelin Xia,
  \href{https://doi.org/10.1137/110845690}{\emph{Variational multiscale models
  for charge transport}}, SIAM Rev. \textbf{54} (2012), no.~4, 699--754.
  \MR{3023368}

\bibitem{MR2145394}
Stephen Wollman and Ercument Ozizmir,
  \href{https://doi.org/10.1016/j.jcp.2004.07.017}{\emph{Numerical
  approximation of the {V}lasov-{P}oisson-{F}okker-{P}lanck system in one
  dimension}}, J. Comput. Phys. \textbf{202} (2005), no.~2, 602--644.
  \MR{2145394}

\bibitem{MR2392562}
\bysame, \href{https://doi.org/10.1016/j.cam.2007.01.008}{\emph{A deterministic
  particle method for the {V}lasov-{F}okker-{P}lanck equation in one
  dimension}}, J. Comput. Appl. Math. \textbf{213} (2008), no.~2, 316--365.
  \MR{2392562}

\bibitem{MR2567867}
\bysame, \href{https://doi.org/10.1016/j.jcp.2009.05.027}{\emph{Numerical
  approximation of the {V}lasov-{P}oisson-{F}okker-{P}lanck system in two
  dimensions}}, J. Comput. Phys. \textbf{228} (2009), no.~18, 6629--6669.
  \MR{2567867}

\bibitem{MR3294407}
Hao Wu, Tai-Chia Lin, and Chun Liu,
  \href{https://doi.org/10.1007/s00205-014-0784-3}{\emph{Diffusion limit of
  kinetic equations for multiple species charged particles}}, Arch. Ration.
  Mech. Anal. \textbf{215} (2015), no.~2, 419--441. \MR{3294407}

\end{thebibliography}

\end{document}